\newtheoremstyle{plainNoItalics}{}{}{\normalfont}{}{\bfseries}{.}{ }{}
\theoremstyle{plain}
\newtheorem{thm}{Theorem}[section]
\theoremstyle{plainNoItalics}
\newtheorem{defn}[thm]{Definition}
\newtheorem{rem}[thm]{Remark}
\newtheorem{prop}[thm]{Proposition}
\newtheorem{exa}[thm]{Example}
\newcommand{\f}{\frac}
\newcommand{\beq}{\begin{equation}}
\newcommand{\eeq}{\end{equation}}
\newcommand{\beqa}{\begin{eqnarray}}
\newcommand{\eeqa}{\end{eqnarray}}
\newcommand{\bit}{\begin{itemize}}
\newcommand{\eit}{\end{itemize}}
\newcommand{\bedef}{\begin{defn}}
\newcommand{\edefn}{\end{defn}}
\newcommand{\bpro}{\begin{prop}}
\newcommand{\epro}{\end{prop}}
\newcommand{\df}{\partial}
\newcommand{\Dt}{\Delta t}
\newcommand{\eps}{\varepsilon}
\newcommand{\mD}{{\mathcal D}}
\newcommand{\xL}{{x_{i-\frac{1}{2}}}}
\newcommand{\xR}{{x_{i+\frac{1}{2}}}}
\newcommand{\iL}{{i-\frac{1}{2}}}
\newcommand{\iR}{{i+\frac{1}{2}}}
\begin{document}

%\baselineskip=1.6pc

%\vspace*{.10in}

%=============  title  =========================
\begin{center}
{\bf
High Order Asymptotic Preserving Nodal Discontinuous Galerkin IMEX Schemes for the BGK Equation}
% in the Navier-Stokes limit}
\end{center}
\vspace{.2in}
%\centerline{
%Jing-Mei
%Qiu\footnote{Department of Mathematics, University of Houston,
%Houston, 77004. E-mail: jingqiu@math.uh.edu. Research supported by
%NSF grant DMS-0914852 and University of Houston} }

\centerline{
Tao Xiong \footnote{Department of
Mathematics, University of Houston, Houston, 77004. E-mail:
txiong@math.uh.edu}
Juhi Jang \footnote{Department of Mathematics, University of California Riverside, Riverside, CA 92521. E-mail: juhijang@math.ucr.edu. Supported in part by NSF grant DMS-1212142.}
Fengyan Li \footnote{Department of Mathematical Sciences, Rensselaer Polytechnic Institute, Troy, NY 12180. E-mail: lif@rpi.edu. Supported in part by NSF grants DMS-0847241 and DMS-1318409.}
Jing-Mei Qiu \footnote{Department of Mathematics, University of Houston,
Houston, 77004. E-mail: jingqiu@math.uh.edu.
The first and last authors are supported by Air Force Office of Scientific Computing YIP grant FA9550-12-0318, NSF grant DMS-1217008 and University of Houston.}
}

\bigskip
\centerline{\bf Abstract}

In this paper, we develop high-order asymptotic preserving (AP) {schemes for the} BGK equation in a hyperbolic scaling, which leads to the macroscopic models such as the Euler and compressible Navier-Stokes equations in the asymptotic limit. Our approaches are based on the so-called micro-macro formulation of the kinetic equation which involves {a} natural decomposition of the problem to {the equilibrium  and the non-equilibrium parts}. The proposed methods are formulated for the BGK equation with constant or spatially variant Knudsen number.
The new ingredients for the proposed methods to achieve high order accuracy  are the following: we introduce discontinuous Galerkin (DG) discretization of arbitrary order of accuracy with nodal Lagrangian basis functions in space; we employ a high order globally stiffly accurate implicit-explicit (IMEX) Runge-Kutta (RK) scheme as time discretization. Two versions of the schemes are proposed:
Scheme I is a direct formulation based on the micro-macro decomposition of the BGK equation, while Scheme II, motivated by the asymptotic analysis for the continuous problem, utilizes certain properties of the projection operator. Compared with Scheme I, Scheme II not only has better computational efficiency (the computational cost is reduced by half roughly), but also allows the establishment of a formal asymptotic analysis. 
Specifically, it is demonstrated that when $0<\eps \ll 1$, Scheme II, up to $\mathcal{O}(\eps^2)$,  becomes a local DG discretization with an explicit RK method for the macroscopic compressible Navier-Stokes equations, a method in a similar spirit to the ones   
in [Bassi \& Rabey 1997, Cockburn \& Shu 1998].
Numerical results are presented for a wide range of Knudsen number to illustrate the effectiveness and high order accuracy of the methods.

\vfill

\noindent {\bf Keywords:}
BGK model; Navier-Stokes system; Implicit-explicit; Asymptotic preserving; Discontinuous Galerkin; Micro-macro decomposition.
\newpage

\section{Introduction}
\label{sec1}
\setcounter{equation}{0}
\setcounter{figure}{0}
\setcounter{table}{0}

In this paper, we are interested in numerically solving the BGK equation, {a simpler relaxation model associated with the Boltzmann equation for the kinetic description of gases, introduced by Bhatnagar, Gross and Krook \cite{bhatnagar1954model}, in a hyperbolic scaling.} Knudsen number $\eps$ is an important dimensionless parameter in such description,
%Boltzmann equation,
defined as the ratio of the molecular mean free path length to a representative physical length scale, characterizing the frequency of molecular collisions hence how rarefied a gas is. In the zero limit of Knudsen number, a sufficient macroscopic model is the Euler system describing the conservation of mass, moment and energy; {when the Knudsen number is sufficiently small but not necessarily zero}, the compressible Navier-Stokes equations are needed which include a correction term on viscosity and heat conductivity.

By far, there have been many research works in numerically simulating the Boltzmann and BGK equations with a wide range of Knudsen number.
{An elegant method based on the micro-macro decomposition framework was proposed by Bennoune, Lemou, Mieussen} \cite{bennoune2008uniformly}, and it correctly captures the macroscopic Navier-Stokes limit when the Knudsen number is sufficiently small. There are various versions of implicit-explicit schemes proposed for the BGK equations in  \cite{pieraccini2007implicit, pieraccini2012microscopically}, as well as for the ES-BGK equation \cite{filbet2011asymptotic}. For the general Boltzmann collisional operator, a novel BGK-penalization strategy was proposed by Filbet and Jin \cite{filbet2010class}. These methods are all related to the asymptotic preserving (AP) concept, for its recent development and review, see \cite{jin2010asymptotic}. Particularly AP schemes are designed to mimic the asymptotic limit from the kinetic to the hydrodynamic models on the PDE level as $\eps$ goes to 0. On the other hand, the macroscopic Navier-Stokes equations have been well studied in the computational fluid dynamics (CFD) community  by many high order shock capturing schemes \cite{shu2003high}; among others, the discontinuous Galerkin methods have been widely used \cite{bassi1997high, cockburn1998local, baumann1999discontinuous, lomtev1999discontinuous, bassi2002numerical}. There is also an interesting work on developing gas-kinetic BGK schemes for the Navier-Stokes equations by taking advantage of the kinetic distribution function as the solution of the BGK equation \cite{xu2001gas}.

Our main focus of this work is to develop a family of high order AP schemes for the BGK equation that works for a wide range of
 Knudsen number based on the micro-macro decomposition framework. The proposed methods are presented and numerically tested for constant Knudsen number $\eps$ and spatially variant $\eps=\eps(x)$. The high order spatial accuracy is achieved by nodal discontinuous Galerkin (DG) finite element approaches, and the high order temporal accuracy is achieved by globally stiffly accurate implicit-explicit (IMEX) Runge-Kutta (RK) methods. The proposed schemes become DG methods with explicit RK time discretizations for the Euler system in the zero limit of the Knudsen number. In order to capture the compressible Navier-Stokes limit for sufficiently small $\eps$, some novel ingredient, inspired by the asymptotic analysis for the continuous problem, is incorporated to further revise the schemes (see Section \ref{sec:3.1}).
 A formal asymptotic analysis shows that the resulting methods not only become DG approximations of the Euler system as $\eps\rightarrow 0$, they also give rise to local DG discretizations, up to $\mathcal{O}(\eps^2)$, of the Navier-Stokes equations, and this is verified by numerical tests in Section \ref{sec:numerical}. These local DG methods are in a similar spirit of that proposed in \cite{bassi1997high, cockburn1998local} based on a mixed formulation of the equations.

DG discretizations are widely-known {in many applications in science and engineering} for their advantages of being $h$-$p$ adaptive, compact, highly efficient in parallel implementations, and flexible for problems with complicated geometries, see \cite{cockburn2001runge, cockburn2000development} and references therein. When the methods are applied to solve PDEs with second or higher order spatial derivatives, local DG methods can be formulated based on the mixed form of the equations \cite{bassi1997high,cockburn1998local}. {Nodal (local) DG methods}, on the other hand, can be considered as DG methods with the discrete spaces represented by Lagrangian nodal basis functions \cite{hesthaven2008nodal}. In the context of solving the BGK equation based on the micro-macro decomposition, nodal DG  methods allow convenient implementations of a projection operator that is spatially dependent, as well as many kinetic and macroscopic quantities.
In addition, it simplifies the treatment when $\eps=\eps(x)$ is spatially dependent.
%It also simplifies the treatment of spatially dependent $\eps(x)$.
 For the time discretization, we treat stiff terms implicitly and non-stiff terms explicitly by adopting the high order globally stiffly accurate IMEX RK methods developed in \cite{ascher1997implicit, pareschi2005implicit}. Our proposed methodology differs from the implicit-explicit strategy in \cite{pieraccini2007implicit, pieraccini2012microscopically} by working with the macroscopic variables $U$ given in \eqref{U-vector} as well as the microscopic one $g = (f - M_U)/\eps$ (see \eqref{maxwellian}), instead of directly working with the probability distribution function $f$. Because of this, the necessity of performing the moments realignment as in \cite{pieraccini2012microscopically} is avoided.
%As a result, the proposed schemes achieve uniformly high order spatial and temporal accuracy for all ranges of $\eps$.
%{Formal asymptotic analysis shows that the scheme becomes the DG approximation of the Euler equation for the Euler limit, and the local DG approximation with up to $\mathcal{O}(\eps^2)$ of the Navier-Stokes equations, similar in spirit to the local DG method based on a mixed formulation as proposed in \cite{bassi1997high, cockburn1998local}}
Finally, we test the proposed schemes with a collection of smooth and non-smooth examples for a wide range of Knudsen number which can be spatially dependent.  Expected high order accuracy and correct asymptotic behavior are validated.
 %for various constant $\varepsilon$'s ranging from $10^{-6}$ to order 1, as well as for $\eps=\eps(x)$ being spatially dependent.
Superior performance, when compared with lower order schemes, is observed in terms of accuracy for smooth test cases as well as
%in terms of
the solution resolution when there are shock structures.

The rest of the paper is organized as follows. In Section 2, we provide the BGK equation in a hyperbolic scaling and its micro-macro decomposition. In Section 3, high order AP schemes are formulated with a nodal DG spatial discretization and a globally stiffly accurate IMEX temporal discretization. A formal asymptotic analysis is performed for the proposed methods to capture the Euler and Navier-Stokes limits. In Section 4, numerical results are presented. %Expected high order accuracy and correct asymptotic behavior are observed for various constant $\varepsilon$'s ranging from $10^{-6}$ to order 1, as well as for $\eps=\eps(x)$ being spatially dependent.
Finally, conclusions are given in Section 5. 
%\section{macro-micro}
\section{Formulation}
\label{sec2}
\setcounter{equation}{0}
\setcounter{figure}{0}
\setcounter{table}{0}

\newcommand \smu {\sqrt{\mu}}
\newcommand \del {\partial}
\newcommand\ep {\varepsilon}

%-------------------------------
%-------------------------------

We consider the BGK equation in a hyperbolic scaling:
\begin{equation}
\partial _{t}f+v\cdot \nabla _{x}f=  \frac{1}{\ep}(M_U-f)
\label{bgk}
\end{equation}
with the initial data $f_0$ and suitable boundary conditions, where $f = f(x,v,t)$ is the distribution function of particles that depends on time $t>0$, position $x\in \Omega_x\subset \mathbb{R}^d$ and velocity $v\in  \mathbb{R}^d$ for $d\geq 1$. The parameter $\ep>0$ is the Knudsen number proportional to the mean free path. And $M_U$ is a local Maxwellian defined by
\begin{equation}
M_U=M_U(x,v,t)=\frac{\rho(x,t)}{(2\pi T(x,t))^{d/2}} \exp \left(-\frac{|v-u(x,t)|^2}{2T(x,t)} \right)
\label{maxwellian}
\end{equation}
where $\rho$, $u$, $T$ represent the macroscopic density, the mean velocity, and the temperature, respectively, and they are obtained by taking the first few moments of $f$:
\begin{equation}\label{U-vector}
U:=\left(
\rho,
\rho u,
\frac12\rho|u|^2 +\frac{d}{2}\rho T
\right)^t = \int_{\mathbb{R}^d} \left(
1,
v,
\frac12 |v|^2
\right)^t   f(v)dv.
\end{equation}
Here the components of $U$ represent the density, momentum, and energy.  {We refer to \cite{cercignani1969mathematical, cercignani1988boltzmann, cercignani2000rarefied} for more details on the model.}

In what follows we derive the fluid equations starting from \eqref{bgk}.
For notational convenience, we use $
m=m(v):= \left( 1,v, \frac12 |v|^2 \right)^t$  and $\langle g \rangle: = \int_{\mathbb{R}^d} g(v) dv$.
It is easy to check that $\langle m M_U  \rangle = \left(
\rho,
\rho u,
\frac12\rho|u|^2 +\frac{d}{2}\rho T
\right)^t = U $ and hence we see that $\langle  m (M_U-f) \rangle =0$, namely the BGK operator satisfies the conservation of mass, momentum and energy. Moreover, it enjoys the entropy dissipation:
$\langle (M_U-f) \log f\rangle \leq 0$. From the conservation properties of the BGK operator, we get at least formally the local conservation of mass, momentum, and energy:
\beq
\label{eq: Euler}
\begin{split}
\del_t \left(
\begin{array}{c} \rho\\ \rho u \\ E
\end{array}
 \right)+\nabla_x\!\cdot\! \left(
 \begin{array}{c}
 \rho u \\ \rho u\otimes u +P \\ E u+ P u +Q
 \end{array}
 \right) =0
\end{split}
\eeq
where $E=\frac{1}{2}\rho |u|^2+\frac{d}{2}\rho T $, the pressure tensor $P$ is given by $P=\langle (v-u)\otimes (v-u) f\rangle$, and the heat flux vector is given by $Q=\frac12\langle  (v-u)|v-u|^2f\rangle$.
%\LL{
When $\ep\rightarrow 0$ in
\eqref{bgk}, $f$ approaches the Maxwellian $M_U$ in \eqref{maxwellian}. Hence, for sufficiently small $\ep$, $f$ can be approximated by this Maxwellian.
% When $\ep\rightarrow 0$ in \eqref{bgk}, $f$ approaches a Maxwellian $M_U$. Hence, for sufficiently small $\ep$, $f$ can be approximated by \eqref{maxwellian}.
 In such an approximation,  $P=pI$, with $p=\rho T$ and $I$ as the $d\times d$ identity matrix, $Q=0$, and  thus the above local conservation laws form a closed system, which is the compressible Euler system. The compressible Navier-Stokes equations are obtained by the classical Chapman-Enskog expansion {\cite{cercignani1969mathematical, cercignani1988boltzmann,  cercignani2000rarefied, chapman1970mathematical}.} Next we will present the micro-macro decomposition of \eqref{bgk}, which has a similar spirit of the Chapman-Enskog expansion, to derive the compressible Navier-Stokes equations. This decomposition will provide the starting point of the proposed numerical methods in this work.
% }
%We will use the micro-macro decomposition, which has a similar spirit of the Chapman-Enskog expansion, to derive the compressible Navier-Stokes equations.

\subsection{Micro-macro formulation}

Let $M$ be a given local Maxwellian. We use $L^2_M$ to denote the Hilbert space equipped with the following weighted inner product
\[
( f,g)_M:= \langle f  g M^{-1} \rangle.
\]
Then any function $f\in L^2_M$ can be written as the unique orthogonal decomposition as follows
\[
f= \Pi_M f +(\mathbf{I}-\Pi_M)f
\]
where $\mathbf{I}$ is the identity operator, ${\Pi_M}f$ is the orthogonal projection in $L^2_M$ onto
$\mathcal{N}:=\text{span}\,\{M,vM,|v|^2M\}$ and its explicit form by using the orthogonal basis of $\mathcal{N}$ is given by
\begin{equation}\label{pi}
 {\Pi_M}f = \left( \frac1\rho\langle f\rangle +\frac{\langle (v-u)f\rangle}{\rho T}\cdot(v-u) +\frac{2}{d\rho} \langle (\frac{|v-u|^2}{2T}-\frac{d}{2}) f \rangle (\frac{|v-u|^2}{2T}-\frac{d}{2})  \right) M.
\end{equation}

We are now ready to present the micro-macro decomposition {\cite{bennoune2008uniformly, liu2004boltzmann}} for the BGK equation \eqref{bgk}. {Here and below, we will use $M$ to denote $M_U$ in \eqref{maxwellian}.}
 %when no confusion arises.
 The starting point  is to seek the solution $f$ of \eqref{bgk}  as
\begin{equation}\label{mm}
f := M+\ep g
\end{equation}
so that $g$ is only microscopic:
$
\langle m g \rangle =0
$.
%\LL{
In a sense, the solution $f$ is decomposed into the macroscopic part $M$ and the microscopic part $\eps g$. %}
If we insert \eqref{mm} into \eqref{bgk}, we obtain
\begin{equation}\label{mg}
\del_t M + v\!\cdot\!\nabla_x M +\ep( \del_t g + v\!\cdot\!\nabla_x g ) = -g \,.
\end{equation}
The idea of the micro-macro decomposition is to decompose \eqref{mg} through the orthogonal projections
${\Pi_{M}}$ and $\mathbf{I}-\Pi_{M}$. To do so, it is useful to recall the followings:
\begin{equation}\label{prop}
{\Pi_{M}}\,g=0, \quad (\mathbf{I}-\Pi_{M}) \,\del_tM=0 , \quad {\Pi_{M}}\,\del_t g=0
\end{equation}
which follow from the definition and direct computations {(for instance, see Lemma 3.1 in \cite{bennoune2008uniformly})}. The orthogonal projection $\mathbf{I}-\Pi_{M}$ of the equation \eqref{mg} reads as
\begin{equation*}
(\mathbf{I}-\Pi_{M}) (\del_t M +v\!\cdot\!\nabla_x M) +\ep (\mathbf{I}-\Pi_{M})  (\del_t g + v\!\cdot\!\nabla_x g)
= -(\mathbf{I}-\Pi_{M}) g
\end{equation*}
which in turn can be, by using \eqref{prop}, written as
\begin{equation}\label{geq}
\eps \del_t g + \eps (\mathbf{I}-\Pi_{M}) (v\!\cdot\!\nabla_x g)
=- \big( g +(\mathbf{I}-\Pi_{M})  (v\!\cdot\!\nabla_x M)\big).
\end{equation}
On the other hand, the projection ${\Pi_{M}}$ of the equation \eqref{mg} gives rise to
\begin{equation}
\begin{split}
\del_t \left(
\begin{array}{c} \rho\\ \rho u \\ E
\end{array}
 \right)+\nabla_x\!\cdot\! \left(
 \begin{array}{c}
 \rho u \\ \rho u\otimes u +p I  \\   (E+p)u
 \end{array}
 \right)  + \ep \nabla_x\!\cdot\!  \left(
 \begin{array}{c} \langle v g\rangle \\ \langle  v\otimes v g \rangle \\ \langle v \frac{|v|^2}{2} g \rangle
\end{array}
 \right) =0.
\end{split}\label{cns-bgk}
\end{equation}
Denoting the flux terms in \eqref{cns-bgk} by $F(U)$, we have derived the following micro-macro decomposition of \eqref{bgk}:
\begin{subequations}
\label{eq:mmdc}
\begin{align}
&\partial_t U + \nabla_x\!\cdot\!  F(U)+\eps  \nabla_x\!\cdot\!  \langle vmg \rangle=0, \label{equ}\\
&\eps \del_t g +  \eps (\mathbf{I}-\Pi_{M}) (v\!\cdot\!\nabla_x g)
=- \big( g +(\mathbf{I}-\Pi_{M})  (v\!\cdot\!\nabla_x M)\big). \label{eqg}
\end{align}
\end{subequations}

%\begin{rem}
In a more general setting where the Knudsen number $\ep$ depends on the position $x$: $\ep=\ep(x)$, since $\nabla_x(\ep g) \neq  \ep\nabla_xg  $ in general, the micro-macro formulation \eqref{eq:mmdc} should be written as follows:

\begin{subequations}
\label{eq:mmd}
\begin{align}
&\partial_t U +\nabla_x\!\cdot\!  F(U) + \nabla_x \!\cdot\! \big( \eps(x)  \langle vmg \rangle \big)=0, \label{equx} \\
&\eps(x)\partial_t g + ({\mathbf I}-\Pi_M)\nabla_x \!\cdot\!\big(\eps(x)vg\big)=-\big( g+({\mathbf  I}-\Pi_M)(v \!\cdot\!  \nabla_x M) \big). \label{eqgx}
\end{align}
\end{subequations}

%\begin{subequations}
%\label{eq:mmd}
%\begin{align}
%&\partial_t U +\nabla_x\!\cdot\!  F(U)+\nabla_x \eps(x) \!\cdot\!  \langle vmg \rangle + %\eps(x)\nabla_x\!\cdot\!  \langle vmg \rangle=0, \label{equx} \\
%&\partial_t g +(I-\Pi_M)(v\!\cdot\! \nabla_x  g)=-\frac{1}{\eps(x)}\left[ g+\nabla_x\eps(x)\!\cdot\! %(I-\Pi_M)(vg) +(I-\Pi_M)(v \!\cdot\!  \nabla_x M) \right]. \label{eqgx}
%\end{align}
%\end{subequations}
%\end{rem}

\subsection{Compressible Navier-Stokes limit}

The first two terms in \eqref{cns-bgk} form the Euler system and we see that as $\ep\rightarrow 0$, the equations \eqref{cns-bgk} at least formally converge to the Euler system. In this subsection, we want to examine the contribution of the third term in \eqref{cns-bgk} and indeed, we will show that the inclusion of the leading order ($\ep$ term)  gives rise to  the set of  the compressible Navier-Stokes equations. From \eqref{geq}, we see that
\begin{equation}
g = -(\mathbf{I}-\Pi_{M}) (v\!\cdot\!\nabla_x M) +{\mathcal O}(\ep)
\end{equation}
and the direct computation shows that
\begin{equation}
\label{eq: analytic_deri}
\frac{(\mathbf{I}-\Pi_{M}) (v\!\cdot\!\nabla_x M) }{M}= B: \left( \nabla_xu+(\nabla_xu)^t -\frac{2}{d}(\nabla_x\!\cdot\! u)I \right) + A\cdot \frac{\nabla_xT}{\sqrt{T}}
\end{equation}
where
\beq
\label{eq: AB}
A=\left(  \frac{|v-u|^2}{2T}-\frac{d+2}{2} \right)\frac{v-u}{\sqrt{T}}\;\text{ and }\;B=\frac12
\left( \frac{(v-u)\otimes(v-u)}{2T} -\frac{|v-u|^2}{dT}I \right)
\eeq
and therefore, we deduce that
\begin{equation}\label{AB}
g =-  B: \left( \nabla_xu+(\nabla_xu)^t -\frac{2}{d}(\nabla_x\!\cdot\! u)I \right) M - A\cdot \frac{\nabla_xT}{\sqrt{T}} M +{\mathcal O}(\ep).
\end{equation}
Here $X:Y = \sum_{i,j}X_{ij}Y_{ij}$ is the Frobenius inner product for matrices.
As we insert this expression \eqref{AB} into \eqref{cns-bgk}, we obtain
\begin{equation}
\begin{split}
\del_t \left(
\begin{array}{c} \rho\\ \rho u \\ E
\end{array}
 \right)+\nabla_x\!\cdot\! \left(
 \begin{array}{c}
 \rho u \\ \rho u\otimes u +p I  \\ (E+p)u
 \end{array}
 \right) = \ep  \left(
 \begin{array}{c} 0 \\ \nabla_x\!\cdot\!\sigma   \\ \nabla_x\!\cdot\!(\sigma u+ q)
\end{array}
 \right) +{\mathcal O}(\ep^2)
\end{split}\label{cns-bgk2}
\end{equation}
where
\beq
\label{eq: sigma_q}
\sigma = \mu \left( \nabla_xu+(\nabla_xu)^t -\frac{2}{d}(\nabla_x\!\cdot\! u)I \right)\text{ and } q=\kappa \nabla_xT
\eeq
and
\[
\mu =T \langle B : B M \rangle \text{ and }\kappa = T\langle A\cdot A M \rangle.
\]
{We refer to \cite{bardos1991fluid} for more details on the derivation.} The above system \eqref{cns-bgk2} is the compressible Navier-Stokes equations if we disregard high order terms ${\mathcal O}(\ep^2)$. We note that when $d=1$, $\sigma=0$ and $\kappa=\frac{3}{2}\rho T$. It is worthwhile pointing out that while the BGK equation shares the basic properties of hydrodynamics with the Boltzmann equation, the Navier-Stokes equations derived from those equations display different viscosity and heat conductivity coefficients {\cite{cercignani1969mathematical, cercignani1988boltzmann, cercignani2000rarefied}.}

%We remark that while the BGK equation satisfies the basic properties of hydrodynamics, solutions of the BGK equation are very different from those of the Boltzmann equation far from equilibria\footnote{The BGK equation admits global weak solutions irregardless of the size of the initial data where as renormalized solutions, introduced by DiPerna and Lions, are needed for the Boltzmann equation.}. In the Navier-Stokes limit, the limiting equations from the BGK and Boltzmann equations are different. Nonetheless, because the BGK operator shares some important properties with the Boltzmann collision operators and it is simpler, it is often used for numerical purposes.

%This derivation based on the micro-macro decomposition can be viewed as Chapman-Enskog expansions since the conserved quantities: mass, momentum, and the energy are not expanded and rather kept in $M^\ep$ throughout the derivation.

\begin{rem}
 From our derivation of Euler or Navier-Stokes system from the BGK equation, we have obtained
 \beq
 p=\rho T, \quad E=\frac{1}{2}\rho |u|^2+\frac{d}{2}\rho T.
 \label{eq: T}
 \eeq
On the other hand, in gas dynamics for an ideal polytropic gas, the total energy is given by $E=\frac12 \rho |u|^2 + \frac{p}{\gamma-1}$ via the constitutive relation between the pressure and internal energy. Therefore, we obtain $\gamma= \frac{d+2}{d}$ which represents the constant ratio of specific heats. % we would have $\gamma=3$ (not $\gamma=1.4$ for the idea gas).
\end{rem}

%\begin{rem}
%For the first two components of eqns. \eqref{equx} and \eqref{equ}, due to $\langle mg \rangle=0$,
%we would have $\langle vmg \rangle=0$ for $m(v)=1, v$. In our numerical implementation, we still have kept these terms. //Juhi -- Move it to a later %section???//
%\end{rem}

%The Maxwellian distribution is given by
%\begin{equation}
%M_U(v)=\frac{\rho}{(2\pi T)^{1/2}} \exp \left(-\frac{|v-u|^2}{2T} \right).
%\label{maxwellian}
%\end{equation}
%For simplicity, later $M$ is used instead of $M_U(v)$. The projection $\Pi_M$ is
%\begin{equation}
%\Pi_M(\phi)=\frac{1}{\rho}\left[ \langle \phi \rangle +\frac{(v-u) \langle (v-u)\phi \rangle }{T}+
%\left( \frac{|v-u|^2}{2T}-\frac12\right)2\left \langle \left (\frac{(v-u)^2}{2T}-\frac12\right)\phi \right \rangle \right ] M.
%\label{pi}
%\end{equation} 
\section{NDG-IMEX Methods}
\label{sec:NDG-IMEX:3}

\setcounter{equation}{0}
\setcounter{figure}{0}
\setcounter{table}{0}

In this section, we propose numerical schemes to solve the system \eqref{eq:mmd}, and they are based on nodal discontinuous Galerkin (NDG) methods in space together with implicit-explicit (IMEX) time discretizations. Since the purpose of the present work is to introduce new algorithms, we will focus on the one-dimensional case with $d=1$, $\Omega_x=[a, b]$ and $\Omega_v=[-V_c, V_c]$. $V_c$
is chosen sufficiently large so that the Maxwellian
defined in \eqref{maxwellian} can be regarded as zero outside $\Omega_v$ numerically.
Most ingredients of the proposed methods can be applied directly to higher dimensions (see Remark \ref{rem:highD}).

\subsection{Semi-discrete NDG methods}
\label{sec:3.1}

Start with a partition of $\Omega_x$, $a=x_{\frac12}<x_{\frac{3}{2}}<\cdots <x_{N_x+\frac12}=b$. Denote an element as
$I_i=[\xL, \xR]$ with length $h_i$, and let $h=\max_{i=0}^{N_x}h_i$. Given any non-negative integer $K$, we define a finite dimensional discrete space,
\begin{equation}
Z_h^K=\left\{z\in L^2(\Omega_x): z|_{I_i}\in P^K(I_i), \forall i\right\},
\label{eq:DiscreteSpace}
\end{equation}
and its vector version is denoted as ${\bf Z}_h^K$. The local space $P^K(I)$ consists of polynomials of degree at most $K$ on $I$.
Note that functions in $Z_h^K$ are piecewise defined. To distinguish the left and right limits of a function $z\in Z_h^K$ at a grid point $x_{i+\frac{1}{2}}$, we let
$z_{i+\frac{1}{2}}^\pm=\lim_{\Delta x\rightarrow \pm 0}z(x_{i+\frac{1}{2}}+\Delta x)$, and we also let $[z]_{i+\frac{1}{2}}=z_{i+\frac{1}{2}}^+-z_{i+\frac{1}{2}}^-$ as the jump.

Following the general procedure to formulate DG discretizations and the development in \cite{JLQX_diffusive}, we first propose a semi-discrete DG method for the micro-macro system \eqref{eq:mmd}. Find
$U_h(\cdot,t)\in {\bf Z}_h^K$ and $g_h(\cdot,v,t)\in Z_h^K$, such that
$\forall \phi, \psi\in Z_h^K$ and $\forall i$,
\begin{subequations}
\label{eq:SDG}
\begin{align}
\int_{I_i} \partial_t U_h \phi dx=&\int_{I_i}\left( F(U_h)+\eps(x) \langle vm g_h\rangle\right)  \frac{d \phi(x)}{dx} dx - \hat{F}_\iR \phi^-_\iR
+ \hat{F}_\iL \phi^+_\iL   \notag \\
&-\eps(x_\iR)  \widehat{\langle vmg_h\rangle}_\iR \phi_\iR^-+\eps(x_\iL) \widehat{\langle vmg_h\rangle}_\iL \phi^+_\iL,
\label{eq:SDG:a} \\
\int_{I_i} \eps(x) \partial_t g_h\psi dx+&
\int_{I_i} ({\mathbf I}-\Pi_{M_h})\left(\mD_{h,1}(\eps(x) vg_h)\right)\psi dx =- \int_{I_i}  g_h\psi dx
- \int_{I_i} ({\mathbf I}-\Pi_{M_h})\left(\mD_{h,2}(vM_h)\right)\psi dx. \label{eq:SDG:b}
\end{align}
\end{subequations}
Here $M_h=M_{U_h}$ according to \eqref{maxwellian}. In addition, $\mD_{h,1}(\eps(x)vg_h)(\cdot,v,t)\in Z_h^K$ and $\mD_{h,2}(v M_h)(\cdot,v,t)\in Z_h^K$ are approximations of the spatial derivative of $\eps(x)vg$ and $vM$, respectively, based on DG discretizations. Particularly, $\forall \psi\in Z_h^K$ and $\forall i$,
\begin{equation}
\int_{I_i} \mD_{h,1}(\eps(x) vg_h)\psi dx:=
-\int_{I_i} \eps(x) vg_h \frac{d \psi}{dx} dx+\eps(x_\iR) \widetilde{(vg_h)}_{\iR} \psi^-_\iR
-\eps(x_\iL) \widetilde{(vg_h)}_{\iL} \psi^+_\iL,
\label{eq:Dd1}
\end{equation}
where $\widetilde{vg}$ is an upwind numerical flux consistent to $vg$,
\beq
\label{eq:vg:upwind:L-1}
\widetilde{vg}:=
\left\{
\begin{array}{ll}
v g^-,&\mbox{if}\; v>0,\\
v g^+,&\mbox{if}\; v<0,
\end{array}
\right.
%=v\{g\}-\frac{|v|}{2}[g],
\eeq
and
\beq
\int_{I_i} \mD_{h,2}(vM_h)\psi dx:=
-\int_{I_i} vM_h \frac{d \psi}{dx} dx+v\widehat{M}_{h,\iR} \psi^-_\iR
-v \widehat{M}_{h,\iL} \psi^+_\iL.\label{eq:Dd2}
\eeq
The hatted functions in \eqref{eq:SDG:a} and \eqref{eq:Dd2} are also consistent numerical fluxes. In this work,
we take one of the following alternating fluxes,
% for the pair $\widehat{ \langle vmg\rangle}$ and $\widehat{M}$,
\begin{equation}
\label{eq: flux}
\mbox{alternating left-right}: \widehat{\langle vmg\rangle} = {\langle vmg\rangle}^-, \widehat{M} = M^+; \quad
\mbox{right-left}:  \widehat{\langle vmg\rangle} = {\langle vmg\rangle}^+, \widehat{M} = M^-.
\end{equation}
Similarly as in \cite{JLQX_diffusive}, one can also use the central fluxes
\begin{equation}
\mbox{central}:  \widehat{\langle vmg\rangle}=\frac12(\langle vmg\rangle^++\langle vmg\rangle^-), \quad
\widehat{M}=\frac12(M^++M^-).
\end{equation}
The numerical flux $\hat{F}=\hat{F}(U_h^-, U_h^+)$ in \eqref{eq:SDG:a} is taken to be the global Lax-Friedrichs flux \cite{cockburn1989tvb2}.
Here the subscripts $i\pm\frac{1}{2}$ are temporarily omitted for simplicity. From now on, we will call the numerical method introduced above
Scheme I.

An alternative discretization is to take advantage of the relation \eqref{eq: analytic_deri}, which, in one dimension, is
\beq
\label{eq: analytic_deri:1d}
(\mathbf{I}-\Pi_{M})  (v\partial_x M)=A\frac{\partial_x T}{\sqrt{T}}M.
\eeq
With this, the equation \eqref{eqgx} is equivalent to
\beq
\eps(x) \partial_t g + ({\mathbf I}-\Pi_M)\partial_x(\eps(x) vg)=-\left( g+A\f{\partial_x T}{\sqrt{T}} M \right). \label{eqgx2}
\eeq
Our Scheme II is formulated by replacing \eqref{eq:SDG:b} with the following DG discretization of \eqref{eqgx2}:
find $g_h(\cdot, v, t) \in Z_h^K$, such that $\forall \psi \in Z_h^K$ and $\forall i$,
\beq
\int_{I_i} \eps(x) \partial_t g_h\psi dx+
\int_{I_i} ({\mathbf I}-\Pi_{M_h})\left(\mD_{h,1}(\eps(x) vg_h)\right)\psi dx =- \int_{I_i}  g_h\psi dx
- \int_{I_i} A_h \frac{r_h}{\sqrt{T_h}} M_h \psi dx,\label{eq:SDG:c}
\eeq
where $r_h$ is to approximate $\partial_x T$ through a DG discretization: find $r_h \in Z_h^K$ such that $\forall \varphi \in Z_h^K$ and $\forall i$
\beq
\label{eq: D3}
%\int_{I_i} \mD_{h, 3}(T_h) \psi dx = -\int_{I_i} T_h \frac{d \psi}{dx} dx+\widehat{T}_{h,\iR} \psi^-_\iR- \widehat{T}_{h,\iL} \psi^+_\iL.
\int_{I_i} r_{h}\varphi dx = -\int_{I_i} T_h \frac{d \varphi}{dx} dx+\widehat{T}_{h,\iR} \varphi^-_\iR- \widehat{T}_{h,\iL} \varphi^+_\iL.
\eeq
Here $T_h$, a macroscopic quantity, and $A_h$
can be obtained from $U_h$ based on \eqref{eq: T} and \eqref{eq: AB}.
   Similar to Scheme I, choices of fluxes for  the pair $\widehat{ \langle vmg\rangle}$ and $\widehat{T}$ include the alternating and the central ones,
\begin{subequations}
\label{eq: flux_2}
\begin{align}
&\mbox{alternating left-right}: \widehat{\langle vmg\rangle} = {\langle vmg\rangle}^-, \widehat{T} = T^+; \quad
\mbox{right-left}:  \widehat{\langle vmg\rangle} = {\langle vmg\rangle}^+, \widehat{T} = T^-,\\
&\mbox{central}: \widehat{\langle vmg\rangle}=\frac12(\langle vmg\rangle^++\langle vmg\rangle^-), \quad \widehat{T} = \frac12(T^+ + T^-).
\end{align}
\end{subequations}

Scheme I is formulated very intuitively, yet Scheme II shows several advantages in both computational cost and in
asymptotic analysis.
First of all, by using the analytical formula \eqref{eq: analytic_deri},
the projection operator is avoided in actual implementation and this will save some computational cost.
Secondly, the spatial derivative on the right side of \eqref{eq: analytic_deri:1d} is
for a macroscopic variable $T$ independent of $v$, in contrast with the one on the
left, this will further reduce the computational cost of the scheme by computing $r_h$ in \eqref{eq: D3}
instead of $\mD_{h,2}(vM_h)$ in \eqref{eq:Dd2}. More importantly,
 a formal asymptotic analysis will be available for establishing that the proposed Scheme II
  for the micro-macro decomposition of the kinetic equations \eqref{equx} and \eqref{eqgx2}
  becomes a local DG discretization for the Navier-Stokes system,
  a discretization in a similar spirit to that proposed in \cite{bassi1997high} (see Section~\ref{sec:3.3} for the analysis and for more discussions).

%\QQ{There are several advantages in adopting the second approach which named `Scheme II'. First of all, by using the analytical formula \eqref{eq: analytic_deri}, the projection operator is avoid to be implemented numerically and some computational cost is saved; secondly, the spatial derivative in the Scheme II, e.g.
%the derivative on the right side of \eqref{eqgx2}, is only taken on the macroscopic variable $T$ which is independent of $v$. Computational effort is significantly reduced for \eqref{eq: D3} while comparing with computing $\mD_{h,2}(vM_h)$ in \eqref{eq:Dd2}; lastly, a formal asymptotic analysis will be available for establishing that the proposed DG scheme for the micro-macro decomposition of the kinetic equations \eqref{equx} and \eqref{eqgx2} becomes a local DG discretization of the Navier-Stokes system, in similar spirit to that proposed in \cite{bassi1997high}, see Section~\ref{sec:3.3} for more discussions.}

To implement the proposed schemes, we further use the nodal basis to represent functions in the discrete space $Z_h^K$,
in conjunction with rewriting and/or approximating the integrals in the schemes by numerical quadratures.
Note that the discrete space $Z_h^K|_{I_i}$ is simply $P^K(I_i)$. We particularly choose the local nodal basis
(also called Lagrangian basis) $\{\phi_i^k(x)\}_{k=0}^K$ associated with the $K+1$ Gaussian quadrature points $\{x^k_i\}^K_{k=0}$ on $I_i$, defined as below
\begin{equation}
\phi_i^k(x) \in P^K(I_i), \quad\textrm{and} \quad \phi_i^k (x_i^{k'})=\delta_{k k'},\quad k, k'=0,1,\cdots, K.
\end{equation}
Here $\delta_{k k'}$ is the Kronecker delta function. We further let $\{\omega_k\}^K_{k=0}$ denote the corresponding quadrature weights on the reference element
$(-\frac{1}{2}, \frac{1}{2})$.

Once the basis functions are specified, {Scheme I  in the integral form}, defined by \eqref{eq:SDG}-\eqref{eq:Dd1} and \eqref{eq:Dd2}, can be equivalently stated with the test functions $\phi$, $\psi$ both being taken as $\phi_i^k, k=0, 1, \cdots, K$. We also replace all the integral terms in {\eqref{eq:SDG}-\eqref{eq:Dd1} and \eqref{eq:Dd2}} by their numerical integrations based on $(K+1)$-point Gaussian quadrature. The scheme now becomes: find
$U_h(\cdot,t)\in {\bf Z}_h^K$ and $g_h(\cdot,v,t)\in Z_h^K$, with
$U_h(x,t)|_{I_i}=\sum_{k=0}^K U_i^k(t) \phi_i^k(x)$, $g_h(x,v,t)|_{I_i}=\sum_{k=0}^K g_i^k(v,t)\phi_i^k(x)$, such that $\forall i, k$,
\begin{subequations}
\label{eq:SDG1}
\begin{align}
\omega_k h_i \frac{d U^k_i}{dt}=& \sum_{k'=0}^K \omega_{k'} h_i F(U^{k'}_i)\frac{d \phi^k_i(x)}{dx}\big |_{x=x^{k'}_i}-\hat F_\iR \phi_i^k(x_\iR^-)+\hat F_\iL \phi_i^k(x_\iL^+)\nonumber \\
&+\sum_{k'=0}^K \omega_{k'}h_i\eps(x^{k'}_i)\langle vm g^{k'}_i\rangle \frac{d \phi^k_i(x)}{dx}\big |_{x=x^{k'}_i}
-\eps(x_{\iR})\widehat{\langle vmg_h\rangle}_\iR \phi_i^k(x_\iR^-) \nonumber\\
&+\eps(x_\iL)\widehat{\langle vmg_h\rangle}_\iL \phi_i^k(x_\iL^+), \label{eq:SDG1:a} \\
\eps(x^k_i)\omega_k h_i \df_t g^k_i=&({\mathbf I}-\Pi_i^k)\Bigg(v\sum_{k'=0}^K \omega_{k'} h_i \eps(x^{k'}_i)g^{k'}_i \frac{d \phi^k_i(x)}{dx}\big |_{x=x^{k'}_i}
-\eps(x_{\iR})\widetilde {(vg_h)}_\iR \phi_i^k(x_\iR^-) \nonumber \\
&+\eps(x_{\iL})\widetilde{(vg_h)}_\iL \phi_i^k(x_\iL^+)\Bigg)- \omega_k h_i g^k_i  +({\mathbf  I}-\Pi_i^k)v \Bigg(\sum_{k'=0}^K \omega_{k'} h_i M^{k'}_i \frac{d \phi^k_i(x)}{dx}\big |_{x=x^{k'}_i}\nonumber \\
&-\widehat{M}_{h,\iR} \phi_i^k(x_\iR^-)+\widehat{M}_{h,\iL} \phi_i^k(x_\iL^+) \Bigg). \label{eq:SDG1:b}
\end{align}
\end{subequations}
Here $M_i^{k'}={M_h}|_{x=x^{k'}_i}$ and $\Pi_i^k=\Pi_{M_i^k}$. Similarly, for Scheme II in the integral form,
instead of the equation \eqref{eq:SDG1:b},
we have nodal discretizations of equations \eqref{eq:SDG:c}-\eqref{eq: D3} given below,
\beqa
\eps(x^k_i)\omega_k h_i \df_t g^k_i=&({\mathbf I}-\Pi_i^k)\Bigg(v\sum_{k'=0}^K \omega_{k'} h_i \eps(x^{k'}_i)g^{k'}_i \frac{d \phi^k_i(x)}{dx}\big |_{x=x^{k'}_i}
-\eps(x_{\iR})\widetilde {(vg_h)}_\iR \phi_i^k(x_\iR^-) \nonumber \\
\label{eq:SDG2:c}
&+\eps(x_{\iL})\widetilde{(vg_h)}_\iL \phi_i^k(x_\iL^+)\Bigg)- \omega_k h_i g^k_i  + A_i^k \omega_k h_i {r_i^k}M_i^k/{\sqrt{T_i^k}},
\eeqa
where the nodal values of $T_i^k=T_h|_{x=x^{k}_i}$ and $A_i^k=A_h|_{x=x^{k}_i}$ are obtained from $U_i^k$ based on
\eqref{eq: T} and \eqref{eq: AB}, and $r_i^k=r_h|_{x=x^{k}_i}$ is computed with the following scheme,
\beq
\label{eq:SDG2:d}
\omega_k h_i r_i^k = - \sum_{k'=0}^K \omega_{k'} h_i T_i^{k'} \frac{d \phi^k_i(x)}{dx}\big |_{x=x^{k'}_i}
+ \widehat{T}_{h,\iR} \phi_i^k(x_\iR^-) -\widehat{T}_{h,\iL} \phi_i^k(x_\iL^+).
\eeq

Note our final Scheme I (\eqref{eq:SDG1:a}-\eqref{eq:SDG1:b}) and Scheme II (\eqref{eq:SDG1:a},\eqref{eq:SDG2:c}, \eqref{eq:SDG2:d})
in their nodal forms are obtained by applying $(K+1)$-point Gaussian quadrature to the integral terms in the original scheme \eqref{eq: compact}. 
Since such quadrature rule is exact for polynomials of degree up to $2K+1$, the final schemes will maintain the same formal accuracy 
as \eqref{eq: compact} (see \cite{cockburn2001runge}).

%\QQ{
%\begin{rem}
%add remarks on general Boltzmann collisional operator.
%\end{rem}
%}
%{\color{blue}/note: one can make further comment on central fluxes in numerical section./  }

%{\color{blue}/note: some other choices of numerical fluxes in order to get the full accuracy for $g$ when $\eps=0$./  }

%{\color{blue}/note: decide later whether we emphasize to have nodal DG here./  }

Due to $\langle mg \rangle=0$, the first two components of $\langle vmg \rangle$, namely with $m=1, v$, are zero for the exact solution $g$. In our numerical implementation,
we still keep these two components of $\langle vmg \rangle$ in \eqref{equx} and in \eqref{eq:SDG:a}, \eqref{eq:SDG1:a}.
To implement the proposed methods, we also need to discretize the $v$-direction. In this work, $\Omega_v=[-V_c, V_c]$ is discretized uniformly with $N_v$ points,  $\{v_j\}_{j=1}^{N_v}$. For the integration in $v$,
the mid-point rule is applied, which is spectrally accurate for smooth functions with periodic %or zero
boundary conditions  or with a compact support \cite{boyd2001chebyshev}.
Such approach does not preserve the conservation properties of mass, moment and energy at the discrete level as in \cite{mieussens2000discrete},
yet we have found it is a sufficiently accurate discretization for all test cases that we have performed. We also demonstrate
how such conservation quantities behave
over time in the numerical section.

\begin{rem}
One advantage to work with the nodal basis with respect to $(K+1)$ Gaussian quadrature points is to avoid inverting element mass matrix  in \eqref{eq:SDG:b} when $\eps(x)$ is not a constant function.
With the nodal DG discretization, we also greatly simplify the treatments of
  the projection operator ${\mathbf I}-\Pi_{M_h}$ and many kinetic and macroscopic quantities such as $A_h, M_h$ and $T_h$ in the scheme.
  %  the macroscopic quantity such as $T_h$ in the schemes.
\end{rem}

\begin{rem}
\label{rem:highD}
Though the proposed method is formulated for one dimension, most ingredients can be extended directly to higher dimensions.
More specifically, Scheme I in its integral form, \eqref{eq:SDG}-\eqref{eq:Dd1} and \eqref{eq:Dd2}, can be formulated
for high dimensional cases straightforwardly.  To get the nodal version as defined in \eqref{eq:SDG1:a}-\eqref{eq:SDG1:b},
one would need to choose the points to define the nodal basis functions. Tensor-structured Gaussian points can be used directly
on Cartesian meshes when the approximating functions are piecewise  tensor polynomials. One can refer to \cite{hesthaven2008nodal}
for more discussions on higher dimensions.
As for Scheme II, the relation in \eqref{eq: analytic_deri:1d} needs to be replaced
by the general one in \eqref{eq: analytic_deri}. It can be discretized similarly as in one dimensional case, with a few more numerical fluxes to be specified.
\end{rem}

% \begin{rem}
%Due to $\langle mg \rangle=0$, the first two components of $\langle vmg \rangle$, namely with $m=1, v$, are zero for the exact solution $g$. In our numerical implementation,
%we still keep these two components of $\langle vmg \rangle$ presented in \eqref{equx} and in \eqref{eq:SDG:a}, \eqref{eq:SDG1:a}.
%\end{rem}

\begin{rem}
The proposed spatial discretization above shares some similarity with the method proposed in \cite{bennoune2008uniformly} in terms of utilizing the micro-macro decomposition framework.
The proposed nodal DG methods belong to the class of finite element methods and can be designed to be of arbitrary order of accuracy;
the methods are based on one set of computational grid. This is in contrast to the first order finite difference method
in  \cite{bennoune2008uniformly} with $U$ and $g$ defined on different meshes.
Moreover, the novel ingredient newly proposed for Scheme II offers not only computational saving,
but also an asymptotical analysis to capture the compressible Navier-Stokes limit for small $\eps$, see Sections~\ref{sec:3.3} and~\ref{sec:numerical} for more discussion.
\end{rem}

\subsection{IMEX time discretization}
\label{sec:3.2}

In this section, we will formulate the IMEX Runge-Kutta (RK) time discretizations for the semi-discrete schemes introduced in Section \ref{sec:3.1}. With similarity and for notational clarity,  this will be presented only for Scheme II in its integral form. Scheme I and the nodal form of Scheme II can be  discussed similarly. Note that Scheme II in the integral form can be given compactly as follows. Find $U_h(\cdot,t)\in {\bf Z}_h^K$, $g_h(\cdot,v,t), r_h(\cdot, t)\in Z_h^K$, such that
$\forall \phi, \psi, \varphi\in Z_h^K$ and $\forall i$,
\begin{subequations}
\label{eq: compact}
\begin{align}
(\partial_t U_h, \phi) + F_h(U_h, \phi) &= D_h( \eps(x) g_h, \phi), \label{eq: compact1}\\
(\eps(x) \partial_t g_h, \psi) + b_{h, v}(\eps(x)g_h, \psi) &= s^{(1)}_h(g_h, \psi) + s^{(2)}_{h, v}(U_h, r_h, \psi), \label{eq: compact2}\\
(r_h, \varphi) =  H_h(U_h, \varphi), \label{eq: compact3}
\end{align}
\end{subequations}
where
\begin{subequations}
\label{eq: S2}
\begin{align}
F_h(U_h, \phi) &= - \int_{\Omega_x} F(U_h)\frac{d \phi(x)}{dx} dx - \sum_i \hat{F}_{h, i+\frac12} [\phi]_{i+\frac12},\\
D_h( \eps(x) g_h, \phi) & =  \int_{\Omega_x} \eps(x) \langle vm g_h\rangle\frac{d \phi(x)}{dx} dx + \sum_i \eps(x_{i+\frac12}) \widehat{\langle vmg_h\rangle}_\iR [\phi]_{i+\frac12}, \\
b_{h, v}(\eps(x)g_h, \psi) &= \int_{\Omega_x} ({\mathbf I}-\Pi_{M_h}) \mD_{h,1}(\eps(x) vg_h)\psi dx,\\
s^{(1)}_h(g_h, \psi) &=-\int_{\Omega_x} g_h\psi dx, \quad
s^{(2)}_{h, v}(U_h, r_h, \psi) =  -\int_{\Omega_x} A_h \frac{r_h}{\sqrt{T_h}} M_h \psi dx, \\
H_h(U_h, \varphi)&= {-\left(\int_{\Omega_x} T_h \frac{d\varphi}{dx} dx + \sum_i \hat{T}_{h, i+\frac12} [\varphi]_{i+\frac12}\right)\big |_{T_h=T_h(U_h)}.}
\end{align}
\end{subequations}

To discretize in time for the scheme in \eqref{eq: compact}, we start with a first order IMEX scheme to introduce our implicit-explicit strategy. Given $U_h^n \in {\bf Z}_h^K$ and $g_h^n \in Z_h^K$ that approximating the solutions $U$ and $g$ at $t=t^n$, respectively,
we look for $U_h^{n+1} \in {\bf Z}_h^K$ and $g_h^{n+1}, {r_h^{n+1}} \in Z_h^K$, such that $\forall \phi, \psi, \varphi \in Z_h^K$,

\begin{subequations}
\label{eq: compact_d}
\begin{align}
\left(\frac{U^{n+1}_h-U^n_h}{\Dt}, \phi \right) + F_h(U^n_h, \phi) &= D_h( \eps(x) g^n_h, \phi), \label{eq: compact_d1}\\
\left(\eps(x) \frac{g^{n+1}_h-g^n_h}{\Dt}, \psi\right) + b_{h, v}(\eps(x)g^n_h, \psi) &= s^{(1)}_h(g^{n+1}_h, \psi) + s^{(2)}_{h, v}(U^{n+1}_h, r_h^{n+1},  \psi), \label{eq: compact_d2} \\
{(r_h^{n+1}, \varphi)} &{= H_h(U_h^{n+1}, \varphi).} \label{eq: compact_d3}
\end{align}
\end{subequations}
This fully discrete scheme can be implemented efficiently. Specifically, one can solve the equation \eqref{eq: compact_d1} for macroscopic variables $U_h^{n+1}$ at the updated time level first, then solve for $r_h^{n+1}$ from \eqref{eq: compact_d3}. Finally one can solve for $g_h^{n+1}$ from \eqref{eq: compact_d2}.

This implicit-explicit procedure can be easily extended to high order globally stiffly accurate IMEX schemes, which can be characterized by a double Butcher Tableau
\beq
\label{eq: B_table}
\begin{array}{c|c}
\tilde{c} & \tilde{A}\\
\hline
 & \tilde{b}^ t\end{array} \ \ \ \ \
\begin{array}{c|c}
{c} & {A}\\
\hline
 & {b^t} \end{array},
\eeq
where $\tilde{A} = (\tilde{a}_{ij})$ is an $s\times s$  lower triangular matrix with zero diagonal for an explicit scheme, and $A = (a_{ij})$ is an $s\times s$ lower triangular matrix with the diagonal entries not all being zero for a diagonally implicit RK (DIRK) method. The coefficients $\tilde{c}$ and $c$ are given by the standard relations
\begin{eqnarray}\label{eq:candc}
\tilde{c}_i = \sum_{j=1}^{i-1} \tilde a_{ij}, \ \ \ c_i = \sum_{j=1}^{i} a_{ij},
\end{eqnarray}
and vectors $\tilde{b} = (\tilde{b}_j)$ and $b = (b_j)$ represent the quadrature weights for internal stages of the RK method. The IMEX RK scheme is defined to be {\em globally stiffly accurate} if $\tilde{c}_s = c_s = 1$ and {$a_{sj} = b_j$, $\tilde{a}_{sj} = \tilde{b}_j$}, $\forall j=1, \cdots, s$. The fully-discrete scheme using the Butcher notation can be written as
follows. Given $U_h^n\in {\bf Z}_h^K$ and $g_h^n \in Z_h^K$, we look for $U_h^{n+1}\in {\bf Z}_h^K$ and $g_h^{n+1} \in Z_h^K$, such that $\forall \phi, \psi \in Z_h^K$,
\begin{subequations}
\label{eq: compact_d_rk}
\begin{align}
\left(U^{n+1}_h, \phi \right) &= \left(U^{n}_h, \phi \right) -\Dt \sum_{l=1}^{s} \tilde{b}_l \left(F_h(U^{(l)}_h, \phi) -D_h( \eps(x) g^{(l)}_h, \phi)\right), \label{eq: compact_d_rk1}\\
\left(\eps(x) g^{n+1}_h, \psi\right) & =\left(\eps(x) g^{n}_h, \psi\right)- \Dt \sum_{l=1}^{s} \tilde{b}_l b_{h, v}(\eps(x)g^{(l)}_h, \psi) + \Dt \sum_{l=1}^{s} b_l \left(s^{(1)}_h(g^{(l)}_h, \psi) + s^{(2)}_{h, v}(U^{(l)}_h, r^{(l)}_h, \psi)\right). \label{eq: compact_d_rk2}
\end{align}
\end{subequations}
Here the approximations at the internal stages of one RK step, $U_h^{(l)}\in {\bf Z}_h^K$ and $g_h^{(l)}, {r_h^{(l)}} \in Z_h^K$ with $l=1, \cdots, s$, satisfy
\begin{subequations}
\label{eq: compact_rk}
\begin{align}
\left(U^{(l)}_h, \phi \right) &= \left(U^{n}_h, \phi \right) -\Dt \sum_{j=1}^{l-1} \tilde{a}_{lj} \left(F_h(U^{(j)}_h, \phi) -D_h( \eps(x) g^{(j)}_h, \phi)\right), \label{eq: compact_rk1}\\
\left(\eps(x) g^{(l)}_h, \psi\right) & =\left(\eps(x) g^{n}_h, \psi\right)- \Dt \sum_{j=1}^{l-1} \tilde{a}_{lj} b_{h, v}(\eps(x)g^{(j)}_h, \psi) + \Dt \sum_{j=1}^{l} a_{lj} \left(s^{(1)}_h(g^{(j)}_h, \psi) + s^{(2)}_{h, v}(U^{(j)}_h, r^{(j)}_h, \psi)\right), \label{eq: compact_rk2}\\
{(r_h^{(l)}, \varphi)} &{= H_h(U_h^{(l)}, \varphi),} \label{eq: compact_rk3}
\end{align}
\end{subequations}
for any $\phi, \psi, \varphi \in Z_h^K$. Similar to the first order IMEX scheme, in a stage-by-stage fashion for {$l=1, \cdots, s$}, one can first solve $U^{(l)}_h$ explicitly from the equation \eqref{eq: compact_rk1}, then plug $U^{(l)}_h$ into \eqref{eq: compact_rk3} to solve $r_h^{(l)}$, and finally solve $g^{(l)}_h$
%explicitly
from \eqref{eq: compact_rk2}.
% again thanks to \JJ{the linearity of the BGK operator.}

%The second order stiffly accurate IMEX scheme we use in this paper is the strong stability preserving IMEX-SSP2(3,3,2) scheme in \cite{pareschi2005implicit} with a double Butcher Tableau
%\begin{eqnarray*}
%\begin{array}{c|ccc}
%              0 & 0 & 0 & 0\\
%              1/2 & 1/2 & 0&0\\
%              1 & 1/2 & 1/2& 0\\
%              \hline
%              & 1/3 & 1/3 & 1/3
%\end{array} \qquad
%\begin{array}{c|ccc}
%               1/4 & 1/4  & 0 & 0\\
%               1/4 & 0 & 1/4 & 0\\
%              1 & 1/3 & 1/3 & 1/3\\
%              \hline
%              & 1/3 & 1/3 & 1/3
%\end{array}
%\end{eqnarray*}
The third order IMEX scheme we use in our simulations is the globally stiffly accurate ARS(4, 4, 3) scheme \cite{ascher1997implicit} with a double Butcher Tableau
\beq
\label{eq: ars443}
%\begin{displaymath}
\begin{array}{c|c c c c c}
0 & 0&0&0& 0 & 0\\
1/2 &1/2&0&0& 0 &0\\
2/3 &11/18&1/18&0& 0 &0\\
1/2 &5/6&-5/6&1/2& 0 &0\\
1 &1/4&7/4&3/4& -7/4 &0\\
\hline
&1/4&7/4&3/4& -7/4 &0\\
 \end{array} \ \ \ \ \
\begin{array}{c|c c c c c}
0 & 0&0&0& 0 & 0\\
1/2 &0&1/2&0&0 &0\\
2/3 &0&1/6&1/2& 0 &0\\
1/2 &0&-1/2&1/2& 1/2 &0\\
1 &0&3/2&-3/2& 1/2 &1/2\\
\hline
 &0&3/2&-3/2& 1/2 &1/2\\
 \end{array}
\eeq

\subsection{Formal asymptotic analysis}
\label{sec:3.3}

It is straightforward to see that when $\eps$ goes to $0$, the proposed Schemes I and II
will give rise to a DG method satisfied by $U_h$ with the Lax-Friedrichs flux for the Euler system (see \eqref{cns-bgk} with $\eps=0$), and in fact the DG scheme in the limit is essentially the same as the one in \cite{cockburn1989tvb2}. This can be easily shown for both the semi-discrete and fully-discrete versions of the methods in their integral or nodal form.

In this subsection, we will focus on the formal asymptotic analysis to capture the compressible Navier-Stokes limit for small $\eps$. It is assumed that Knudsen number $\eps$ is constant, and the $v$-direction is continuous without being discretized.
Though both Schemes I and II perform well numerically for a wide range of Knudsen number (see Section \ref{sec:numerical}), only Scheme II can be formally shown to have the correct compressible Navier-Stokes limit. More specifically, in Proposition~\ref{prop: asymptotics} below, we will establish a formal asymptotic analysis, showing that for $0<\eps\ll1$, the fully-discrete DG-IMEX Scheme II in its nodal form will become a local DG method in its nodal form with some explicit RK time discretization, up to $\mathcal{O}(\eps^2)$,  for the compressible Navier-Stokes system.
The limiting scheme is in a similar spirit to the highly cited work proposed by Bassi and Rebay in 1997 \cite{bassi1997high} which was later generalized and analyzed in \cite{cockburn1998local}.

To establish such connection, we first formulate a local DG method with an explicit RK time discretization for the compressible Navier-Stokes system \eqref{cns-bgk2}, which in one dimension is given as follows,
\begin{equation}
\begin{split}
\del_t \left(
\begin{array}{c} \rho\\ \rho u \\ E
\end{array}
 \right)+
 \del_x \left(
 \begin{array}{c}
 \rho u \\ \rho u^2 +p I  \\ (E+p)u
 \end{array}
 \right) = \ep  \del_x \left(
 \begin{array}{c} 0 \\
 0   \\
 \frac32 \rho T \partial_x T
\end{array}
 \right).
 %+{\mathcal O}(\ep^2).
\end{split}\label{cns-bgk-1d}
\end{equation}
The semi-discrete local DG method is to find $U_h(\cdot, t) \in {\bf Z}_h^K$ and  $r_h(\cdot, t) \in Z_h^K$, such that $\forall \phi, \varphi \in Z_h^K$
\begin{subequations}
\label{eq:scheme:cns}
\begin{align}
\label{eq: compact_cns}
\left({\partial_t}U_h, \phi \right) + F_h(U_h, \phi) &= \eps F^{(vis)}_h(U_h, r_h, \phi),\\
\label{eq: q}
(r_h, \varphi)&=
-\sum_i\left(\int_{I_i} T_h \frac{d\varphi}{dx} dx + \widehat{T}_{h,i+\frac12} [\varphi]_{\iR}\right).
\end{align}
\end{subequations}
Here $F^{(vis)}_h(U_h, r_h, \phi) = (0, 0,   f^{(vis)}_{E,h})^t$ with
\begin{align}
f^{(vis)}_{E,h}
& = -\frac32\sum_i \left(\int_{I_i}  \rho_h T_h r_h \frac{d\phi}{dx} dx +  \widehat{(\rho_h T_h r_h)}_\iR  [\phi]_{\iR} \right), \label{eq: vis_flux}
\end{align}
where $T_h$ is obtained from $U_h$ based on \eqref{eq: T}.
Note that the right hand side of \eqref{eq: q} is just $H_h(U_h, \varphi)$ defined in \eqref{eq: S2}, and it is explicitly written here to emphasize that  \eqref{eq: q} is to approximate the auxiliary variable $r := \partial_x T$.
The numerical fluxes in \eqref{eq: q} and \eqref{eq: vis_flux} can be taken to be either alternating or central fluxes.
Via the method of line approach, the local DG scheme \eqref{eq:scheme:cns}
can be evolved in time by an explicit RK method characterized by a Butcher table $\tilde{A}$, $\tilde{b}$ and $\tilde{c}$ in \eqref{eq: B_table} as follows: find $U^{n+1}_h(\cdot, t), U^{(l)}_h \in {\bf Z}_h^K$ and  $r^{(l)}_h(\cdot, t) \in Z_h^K$ with $l=1, \cdots, s$, such that $\forall \phi, \varphi \in Z_h^K$,
\beq
\label{eq: compact_cns_rk0}
\left(U^{n+1}_h, \phi \right) = \left(U^{n}_h, \phi \right) -\Dt \sum_{l=1}^{s} \tilde{b}_l \left(F_h(U^{(l)}_h, \phi) -\eps F^{(vis)}_h(U^{(l)}_h, r^{(l)}_h, \phi)\right),
\eeq
with
\beq
\label{eq: compact_cns_rk}
\left(U^{(l)}_h, \phi \right) = \left(U^{n}_h, \phi \right) -\Dt \sum_{j=1}^{l-1} \tilde{a}_{lj} \left(F_h(U^{(j)}_h, \phi) -\eps F^{(vis)}_h (U^{(j)}_h,  r^{(j)}_h, \phi)\right),
\eeq
and
\beq
\label{eq: q_rk}
(r_h^{(l)}, \varphi)=
-\sum_i\left(\int_{I_i} T_h^{(l)} \frac{d\varphi}{dx} dx + \widehat{T}^{(l)}_{h,i+\frac12} [\varphi]_{\iR}\right).
\eeq
The method above can be further given in its nodal form similarly as in Section \ref{sec:3.1}, and this is omitted for brevity.
Next we will state and show a formal asymptotic analysis for the proposed Scheme II in its nodal form. {We consider the third order IMEX scheme ARS(4, 4, 3) as an example to illustrate the analysis, while the result can be extended to general IMEX methods of type A, CK or ARS (for definitions of these types see \cite{boscarino2008error}).}

\begin{prop}
\label{prop: asymptotics}
For the BGK equation based on the micro-macro formulation \eqref{eq:mmdc}, we consider the fully discrete DG-IMEX scheme \eqref{eq: compact_d_rk}-\eqref{eq: compact_rk}, with operators specified in \eqref{eq: S2} for Scheme II in its nodal form, and {the third order globally stiffly accurate IMEX time discretization ARS(4,4,3) characterized by the double Butcher table \eqref{eq: B_table} and given in \eqref{eq: ars443}}.
 For $0<\eps\ll 1$, the scheme is asymptotically equivalent, up to $\mathcal{O}(\eps^2)$,
 to the local DG method \eqref{eq: compact_cns_rk0}-\eqref{eq: q_rk} in its nodal form for compressible Navier-Stokes equations \eqref{cns-bgk-1d}, coupled with the explicit RK time discretization  characterized by the Butcher table $\tilde{A}$, $\tilde{b}$ and $\tilde{c}$ in \eqref{eq: B_table}.
\end{prop}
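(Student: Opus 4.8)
The plan is to carry out a Hilbert-type asymptotic expansion in $\eps$ of all the stage quantities of the fully discrete Scheme II \eqref{eq: compact_d_rk}--\eqref{eq: compact_rk} and to match powers of $\eps$, showing that the macroscopic stage update \eqref{eq: compact_rk1} collapses onto the explicit Runge--Kutta stage update \eqref{eq: compact_cns_rk} of the local DG method for \eqref{cns-bgk-1d}. Writing $U_h^{(l)}=U_h^{(l),0}+\eps U_h^{(l),1}+\cdots$, $g_h^{(l)}=g_h^{(l),0}+\eps g_h^{(l),1}+\cdots$ and $r_h^{(l)}=r_h^{(l),0}+\eps r_h^{(l),1}+\cdots$ (and likewise at levels $n$, $n+1$), I would substitute these into \eqref{eq: compact_rk} and collect terms order by order. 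Since the viscous contribution enters the $U$-equation only through $D_h(\eps g_h^{(l)},\phi)=\eps D_h(g_h^{(l)},\phi)$, the correction $\eps g_h^{(l),1}$ affects the momentum and energy updates only at $\mathcal{O}(\eps^2)$; it therefore suffices to identify the leading part $g_h^{(l),0}$ and to evaluate its velocity moments.

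The heart of the argument is to establish that the leading part of $g_h$ lives on the discrete Navier--Stokes manifold,
\[
g_h^{(l),0}=-\,A_h^{(l)}\,\frac{r_h^{(l),0}}{\sqrt{T_h^{(l)}}}\,M_h^{(l)},\qquad l=1,\dots,s,
\]
the discrete analogue of \eqref{eqgx2} with $\eps=0$. I would obtain this from \eqref{eq: compact_rk2}: because $b_{h,v}$ carries an explicit factor $\eps$ while the relaxation $s_h^{(1)}(g_h,\psi)=-(g_h,\psi)$ and the source $s_{h,v}^{(2)}$ do not, the $\mathcal{O}(1)$ balance of \eqref{eq: compact_rk2} reduces to $\sum_{j=1}^{l}a_{lj}\,(e_j,\psi)=0$ for all $\psi\in Z_h^K$, where $e_j:=g_h^{(j),0}+A_h^{(j)}r_h^{(j),0}M_h^{(j)}/\sqrt{T_h^{(j)}}$ measures the deviation from the manifold. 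Using the structure of the implicit table in \eqref{eq: ars443}, namely the vanishing first column $a_{l1}=0$ and the nonzero diagonal $a_{ll}=1/2$ for $l\ge2$ (the defining feature of type ARS, and analogously of types A and CK), this lower-triangular system forces $e_l=0$ for every $l\ge 2$, independently of the first stage. The globally stiffly accurate property ($a_{sj}=b_j$, $\tilde a_{sj}=\tilde b_j$, $c_s=\tilde c_s=1$) then gives $g_h^{n+1}=g_h^{(s)}$ and $U_h^{n+1}=U_h^{(s)}$, so $e_s=0$ propagates the manifold relation to the next time level and closes an induction in $n$; in particular any non-equilibrium initial error is removed after one step. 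I would also record that \eqref{eq: compact_rk3} and \eqref{eq: q_rk} coincide, so the auxiliary variable $r_h^{(l)}$ is computed identically in the two schemes.

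With $g_h^{(l),0}$ pinned down, the remaining step is a moment computation. Substituting the manifold expression into $D_h(\eps g_h^{(l)},\phi)$ and invoking the one-dimensional Chapman--Enskog identities of Section~\ref{sec2}, in particular $\sigma=0$ and $\kappa=\tfrac32\rho T$ from \eqref{eq: sigma_q}, I would show $\langle v m\,A_h M_h\rangle=(0,0,\tfrac32\rho_h T_h\sqrt{T_h})^t$, so that $\eps\langle v m\,g_h^{(l),0}\rangle=(0,0,-\tfrac32\eps\,\rho_h^{(l)}T_h^{(l)}r_h^{(l)})^t$. Matching this against \eqref{eq: vis_flux}, and checking that the alternating/central flux choices for $\widehat{\langle vmg\rangle}$ and $\widehat T$ correspond exactly to those for the viscous flux and for $r_h$, yields $D_h(\eps g_h^{(l)},\phi)=\eps F^{(vis)}_h(U_h^{(l)},r_h^{(l)},\phi)+\mathcal{O}(\eps^2)$. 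Inserting this into \eqref{eq: compact_rk1} reproduces \eqref{eq: compact_cns_rk} verbatim up to $\mathcal{O}(\eps^2)$, and likewise \eqref{eq: compact_d_rk1} reproduces \eqref{eq: compact_cns_rk0}; since the $U$-stages already use only the explicit coefficients $\tilde a_{lj},\tilde b_l$, this is precisely the explicit RK local DG method for \eqref{cns-bgk-1d}.

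I expect the main obstacle to be the second step: rigorously propagating the leading-order balance through the coupled RK stages. The delicate points are that the balance at stage $l$ is only an implicit relation coupling all stages $j\le l$, so the conclusion $e_l=0$ genuinely relies on the algebraic structure of the Butcher tableaux rather than on a stagewise local solve; and that one must track carefully how the explicit transport term $b_{h,v}$, which is $\mathcal{O}(\eps)$ yet feeds the next stage, contributes to $g_h^{(l),1}$ without polluting the $\mathcal{O}(\eps)$ viscous balance in the $U$-equation. A secondary but necessary check, made transparent by the nodal formulation, is that the pointwise relaxation in \eqref{eq: compact_rk2}, together with the Gaussian-quadrature evaluation of $\Pi_{M_h}$, $A_h$, $M_h$ and $T_h$, leaves the manifold identity exact at every quadrature node, so that no spurious quadrature error enters at $\mathcal{O}(1)$ or $\mathcal{O}(\eps)$.
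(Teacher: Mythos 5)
Your proposal is correct and follows essentially the same route as the paper's proof: the stiff relaxation in the $g$-stage equation forces the leading-order $g_h^{(l)}$ onto the discrete Navier--Stokes manifold $g_h^{(l)}=-A_h^{(l)}r_h^{(l)}M_h^{(l)}/\sqrt{T_h^{(l)}}+\mathcal{O}(\eps)$ for $l\ge 2$, global stiff accuracy ($g_h^{n+1}=g_h^{(s)}$) handles the first stage of the next step since $a_{11}=0$, and the moment identity $\langle vm\,A_hM_h\rangle=(0,0,\tfrac32\rho_hT_h\sqrt{T_h})^t$ converts $D_h(\eps g_h,\phi)$ into $\eps F_h^{(vis)}$ up to $\mathcal{O}(\eps^2)$, with the explicit part of the tableau carried over. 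Your explicit lower-triangular argument ($a_{l1}=0$, $a_{ll}=1/2\neq 0$) is a more detailed justification of the stage relation that the paper simply asserts, and your signs are in fact more careful than the paper's own equation \eqref{eq: vmg}, which drops a minus sign.
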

\begin{proof}
It is sufficient to prove the equivalence of the discretizations, namely, the nodal forms of the $D_h$ term in \eqref{eq: compact_d_rk}-\eqref{eq: compact_rk} and the $\eps F^{(vis)}_h$ term in \eqref{eq: compact_cns_rk0}-\eqref{eq: compact_cns_rk}, for the viscous term in \eqref{cns-bgk-1d}
for sufficiently small $\eps$.
Consider the fully discrete scheme \eqref{eq: compact_d_rk}-\eqref{eq: compact_rk} in its nodal form (e.g. see equations  \eqref{eq:SDG1:a}, \eqref{eq:SDG2:c}, \eqref{eq:SDG2:d}),
when $0<\eps\ll1$ and for {the RK stage, $l = 2, \cdots, s$},
we have
\beq
\label{eq: ap_g}
(g_i^k)^{(l)} = -(A_i^k)^{(l)} (r_i^k)^{(l)} (M_i^k)^{(l)}/\sqrt{(T_i^k)^{(l)}} + \mathcal{O}(\eps), \quad \forall i=1, \cdots, N_x, \quad k = 0, \cdots, K
\eeq
with $(r_i^k)^{(l)}$ obtained from the equation \eqref{eq:SDG2:d}.
Applying $\langle vm \cdot \rangle$ to the above equation gives
\begin{align}
\langle vm(g_i^k)^{(l)} \rangle & = - \langle vm (A_i^k)^{(l)} (M_i^k)^{(l)} \rangle {(r_i^k)^{(l)}}/{\sqrt{(T_i^k)^{(l)}}} + \mathcal{O}(\eps), \nonumber\\
& = (0, 0,  \frac32 (\rho_i^k))^{(l)} (T_i^k)^{(l)} (r_i^k)^{(l)})^t +\mathcal{O}(\eps), \quad \forall i=1, \cdots, N_x, \quad k = 0, \cdots, K.
\label{eq: vmg}
\end{align}
Here the second equality is obtained along the same line of deriving the viscous term in the equation \eqref{cns-bgk-1d}.
{Since the IMEX method is globally stiffly accurate, $(g_i^k)^{n+1}= (g_i^k)^{(s)}$, hence \eqref{eq: ap_g} holds also at $t^{n+1}$. Such property is needed to justify \eqref{eq: ap_g} for $l=1$ when $a_{11} = 0$ for the next time step evolution, see the Butcher table \eqref{eq: ars443}.}
Now one can plug \eqref{eq: vmg} into the last term in \eqref{eq: compact_d_rk1} and in \eqref{eq: compact_rk1} in the nodal form (see e.g. \eqref{eq:SDG1:a}), and up to $\mathcal{O}(\eps^2)$, this will result in  $\eps F^{(vis)}_{h} (\cdot, \cdot, \cdot)$ in \eqref{eq: compact_cns_rk0}-\eqref{eq: compact_cns_rk} in its nodal form. This, in addition to \eqref{eq:SDG2:d}, determines the discretization of the viscous term up to $\mathcal{O}(\eps^2)$. The explicit part of the IMEX time discretization will be naturally carried over.
\end{proof}
\begin{rem}
The local DG method in \eqref{eq: compact_cns_rk0}-\eqref{eq: q_rk} is in a similar spirit to, yet different from the one proposed in \cite{bassi1997high}. In \cite{bassi1997high}, auxiliary variables $S := {\partial_x U}$ are introduced and approximated together with $U$  by local DG discretizations with central fluxes, while here only one auxiliary variable $r := {\partial_x T}$ is introduced {for one-dimensional problems}.
\end{rem}

\section{Numerical Examples}
\label{sec:numerical}
\setcounter{equation}{0}
\setcounter{figure}{0}
\setcounter{table}{0}

In this section, we consider the proposed nodal DG methods with the IMEX time discretizations
%Schemes I and II in the nodal forms 
for solving the micro-macro decomposed equations \eqref{eq:mmdc} with constant $\eps$, or \eqref{eq:mmd} with variable $\eps(x)$. The alternating left-right numerical flux is taken.
We use NDG($K$) to denote the method formulated based on $K$-point Gaussian quadrature, while in time the third order IMEX scheme in Section \ref{sec:3.2} is applied. The time step is chosen to be $\Delta t= C_{CFL} \Delta x/\max(\Lambda,V_c)$ for $K=1, 2, 3$, and  $\Delta t= C_{CFL} x^{4/3}/\max(\Lambda,V_c)$ for $K=4$,  where $\Lambda=\| |u|+\sqrt{\gamma T} \|_\infty$ is the maximal absolute eigenvalue of  $\df F(U)/\df U$ over the spatial domain, and the CFL number $C_{CFL}$ is taken to be $0.2, 0.1, 0.05, 0.01$ for $K=1, 2, 3, 4$ respectively.  The velocity domain $\Omega_v=[-V_c, V_c]$ is set to be large enough.
For the one dimensional problem considered in this paper,  $\gamma=\frac{d+2}{d}=3$, and the numerical examples are slightly different from the classical ones with $\gamma=1.4$ \cite{shu1989efficient}, see particularly the setup of the initial conditions and the final time $t$.
 For discontinuous solutions, the TVB limiter \cite{cockburn1989tvb2} is used and it is only applied on $U_h$, with the TVB parameter $M_{tvb}=20$ unless otherwise specified.

As discussed in Section \ref{sec:3.1}, Scheme II is computationally more efficient than Scheme I. Numerically,
we observe that around half CPU time is saved when Scheme II is used. Both schemes produce similar results. In the following, we will only present the results from Scheme II.

\begin{exa}
\label{ex41}
{\bf(Accuracy tests.)}
We first consider an example with smooth exact solutions. The initial conditions are
\begin{equation}
\label{smooth}
\rho(x,0)=1+0.2 \sin(x), \quad p=1, \quad u=1,
\end{equation}
with
%\begin{equation}
%\label{smooth1}
%g(x,v,0)=-({\mathbf I}-\Pi_M) (v \partial_x M),
%\end{equation}
%which equivalently is
\begin{equation}
\label{smooth2}
g(x,v,0)=-A\frac{\partial_x T}{\sqrt{T}}M,
\end{equation}
on the domain $[-\pi, \pi] \times [-12, 12]$ with periodic boundary conditions in the $x$ direction.
$\Omega_v=[-12, 12]$ is discretized with $N_v=100$ uniform points. Since the exact solution is not available,
the $L^1$ errors are computed as the difference of the numerical solutions on two consecutive meshes,
\begin{subequations}
\label{l1error}
\begin{align}
L^1 \text{ error of $\rho$ } (h) &=\frac{1}{2\pi}\sum_i \int_{I_i}|\rho_{h}(\cdot,T)-\rho_{h/2}(\cdot,T)|dx \;, \label{l1errorr} \\
L^1 \text{ error of $g$ } (h) &=\frac{1}{2\pi N_v}\sum_{i,j} \int_{I_i}|g_{h}(\cdot,v_j,T)-g_{h/2}(\cdot,v_j,T)|dx \;.
\label{l1errorg}
\end{align}
\end{subequations}
Here $w_h$ is the numerical solution when the mesh size is $h$, with $w$ to be $\rho$ or $g$, and $I_i$ is an element from the finer mesh with the mesh size $h/2$. The corresponding convergence order is computed by
%The convergence orders are computed by
\begin{equation}
\label{l1order}
\text{order}=\frac{\log \left ( L^1 \text{ error of $w$ } (h) / L^1 \text{ error of $w$ } (h/2) \right)}{\log 2}.
\end{equation}
We show the $L^1$ errors and orders of NDG($K$), with $\eps=1, 10^{-2}, 10^{-6}$ and $K=1, 2, 3, 4$, at time $t=0.001$ in Table \ref{tab1} with initial conditions \eqref{smooth} and \eqref{smooth2}. From these results, we can see that $K$-th order of accuracy for $\rho$ has been obtained for NDG($K$). However, for $g$, only $(K-1)$-th order can be observed except for the case of $K=1$. For this smooth problem, we also show the conserved properties of the methods by presenting $\eps\langle mg \rangle$ from NDG3 with $m=(1, v, |v|^2/2)^t$ for $\eps=1$ and $\eps=10^{-6}$ in Fig. \ref{fig0}. Analytically $\langle mg \rangle = 0$ however numerically they are often not. We can see that the conservation errors can be greatly improved by doubling the domain $\Omega_v$, even with $N_v$ unchanged. Similar results hold for the other examples in this section.

\begin{table}
\centering
\caption{$L^1$ errors and orders for $\rho$ and $g$ of Example \ref{ex41} with initial conditions \eqref{smooth} and \eqref{smooth2}. $t=0.001$. }
\vspace{0.2cm}
  \begin{tabular}{|c|c||p{1.6cm}|c|p{1.6cm}|c||p{1.6cm}|c|p{1.6cm}|c|}
    \hline
 &   N    &  $L^1$ error of $\rho$ & order   & $L^1$ error of $g$  & order
          &  $L^1$ error of $\rho$ & order   & $L^1$ error of $g$  & order \\\hline
 &        & \multicolumn{4}{|c||}{NDG1}   & \multicolumn{4}{|c|}{NDG2}   \\\hline
\multirow{5}{*}{$\eps=1$}
 &     10 &     1.97E-02 &       --&     6.63E-04 &       --
          &     1.52E-03 &       --&     5.12E-05 &       --  \\  \cline{2-10}
 &     20 &     1.00E-02 &     0.98&     3.30E-04 &     1.01
          &     3.76E-04 &     2.02&     1.32E-05 &     1.95  \\  \cline{2-10}
 &     40 &     5.00E-03 &     1.00&     1.65E-04 &     1.00
          &     9.42E-05 &     2.00&     3.37E-06 &     1.97  \\  \cline{2-10}
 &     80 &     2.50E-03 &     1.00&     8.25E-05 &     1.00
          &     2.38E-05 &     1.98&     8.78E-07 &     1.94  \\  \cline{2-10}
 &    160 &     1.25E-03 &     1.00&     4.13E-05 &     1.00
          &     6.08E-06 &     1.97&     2.39E-07 &     1.88  \\  \hline
% &    320 &     6.25E-04 &     1.00&     2.06E-05 &     1.00
%          &     1.57E-06 &     1.96&     6.99E-08 &     1.78  \\  \hline
\multirow{5}{*}{$\eps=10^{-2}$}
 &     10 &     1.97E-02 &       --&     6.60E-04 &       --
          &     1.52E-03 &       --&     7.94E-05 &       --  \\  \cline{2-10}
 &     20 &     1.00E-02 &     0.98&     3.30E-04 &     1.00
          &     3.76E-04 &     2.02&     3.27E-05 &     1.28  \\  \cline{2-10}
 &     40 &     5.00E-03 &     1.00&     1.65E-04 &     1.00
          &     9.42E-05 &     2.00&     1.51E-05 &     1.11  \\  \cline{2-10}
 &     80 &     2.50E-03 &     1.00&     8.25E-05 &     1.00
          &     2.38E-05 &     1.98&     7.29E-06 &     1.05  \\  \cline{2-10}
 &    160 &     1.25E-03 &     1.00&     4.13E-05 &     1.00
          &     6.08E-06 &     1.97&     3.48E-06 &     1.07  \\  \hline
% &    320 &     6.25E-04 &     1.00&     2.06E-05 &     1.00
%          &     1.57E-06 &     1.96&     1.58E-06 &     1.14  \\  \hline
\multirow{5}{*}{$\eps=10^{-6}$}
 &     10 &     1.97E-02 &       --&     7.04E-04 &       --
          &     1.52E-03 &       --&     6.20E-04 &       --  \\  \cline{2-10}
 &     20 &     1.00E-02 &     0.98&     3.54E-04 &     0.99
          &     3.76E-04 &     2.02&     3.20E-04 &     0.96  \\  \cline{2-10}
 &     40 &     5.00E-03 &     1.00&     1.78E-04 &     0.99
          &     9.42E-05 &     2.00&     1.63E-04 &     0.97  \\  \cline{2-10}
 &     80 &     2.50E-03 &     1.00&     8.87E-05 &     1.00
          &     2.38E-05 &     1.98&     8.42E-05 &     0.95  \\  \cline{2-10}
 &    160 &     1.25E-03 &     1.00&     4.44E-05 &     1.00
          &     6.08E-06 &     1.97&     4.43E-05 &     0.93  \\  \hline
% &    320 &     6.25E-04 &     1.00&     2.22E-05 &     1.00
%          &     1.57E-06 &     1.96&     2.37E-05 &     0.90  \\  \hline \hline

 &        & \multicolumn{4}{|c||}{NDG3}   & \multicolumn{4}{|c|}{NDG4}   \\\hline
 \multirow{5}{*}{$\eps=1$}
  &     10 &     7.76E-05 &       --&     3.73E-06 &       --
           &     3.79E-06 &       --&     2.62E-07 &       --  \\  \cline{2-10}
  &     20 &     1.00E-05 &     2.96&     4.75E-07 &     2.97
           &     2.35E-07 &     4.01&     1.77E-08 &     3.89  \\  \cline{2-10}
  &     40 &     1.31E-06 &     2.93&     6.15E-08 &     2.95
           &     1.46E-08 &     4.00&     1.16E-09 &     3.93  \\  \cline{2-10}
  &     80 &     1.79E-07 &     2.87&     8.31E-09 &     2.89
           &     9.16E-10 &     4.00&     1.05E-10 &     3.47  \\  \cline{2-10}
  &    160 &     2.56E-08 &     2.81&     1.23E-09 &     2.76
           &     5.75E-11 &     3.99&     2.49E-11 &     2.08  \\  \hline
%  &    320 &     3.74E-09 &     2.77&     2.35E-10 &     2.38
%           &     3.63E-12 &     3.98&     9.37E-12 &     1.41  \\  \hline
 \multirow{5}{*}{$\eps=10^{-2}$}
  &     10 &     7.76E-05 &       --&     5.80E-06 &       --
           &     3.79E-06 &       --&     3.87E-07 &       --  \\  \cline{2-10}
  &     20 &     1.00E-05 &     2.96&     1.08E-06 &     2.43
           &     2.35E-07 &     4.01&     3.79E-08 &     3.35  \\  \cline{2-10}
  &     40 &     1.31E-06 &     2.93&     2.29E-07 &     2.23
           &     1.46E-08 &     4.00&     4.11E-09 &     3.21  \\  \cline{2-10}
  &     80 &     1.79E-07 &     2.87&     5.05E-08 &     2.18
           &     9.16E-10 &     4.00&     4.58E-10 &     3.16  \\  \cline{2-10}
  &    160 &     2.56E-08 &     2.81&     1.09E-08 &     2.22
           &     5.75E-11 &     3.99&     4.86E-11 &     3.24  \\  \hline
%  &    320 &     3.73E-09 &     2.78&     2.33E-09 &     2.22
%           &     3.63E-12 &     3.98&     5.00E-12 &     3.28  \\  \hline
 \multirow{5}{*}{$\eps=10^{-6}$}
  &     10 &     7.76E-05 &       --&     4.05E-05 &       --
           &     3.79E-06 &       --&     2.81E-06 &       --  \\  \cline{2-10}
  &     20 &     1.00E-05 &     2.96&     9.50E-06 &     2.09
           &     2.35E-07 &     4.01&     3.63E-07 &     2.95  \\  \cline{2-10}
  &     40 &     1.31E-06 &     2.93&     2.23E-06 &     2.09
           &     1.46E-08 &     4.00&     4.76E-08 &     2.93  \\  \cline{2-10}
  &     80 &     1.79E-07 &     2.87&     5.14E-07 &     2.12
           &     9.16E-10 &     4.00&     6.34E-09 &     2.91  \\  \cline{2-10}
  &    160 &     2.56E-08 &     2.81&     1.18E-07 &     2.12
           &     5.75E-11 &     3.99&     8.48E-10 &     2.90  \\  \hline
 % &    320 &     3.73E-09 &     2.78&     3.28E-08 &     1.85
 %          &     3.63E-12 &     3.98&     1.10E-10 &     2.94  \\  \hline
\end{tabular}
\label{tab1}
\end{table}

\begin{figure}[ht]
\centering
\includegraphics[totalheight=2.5in]{./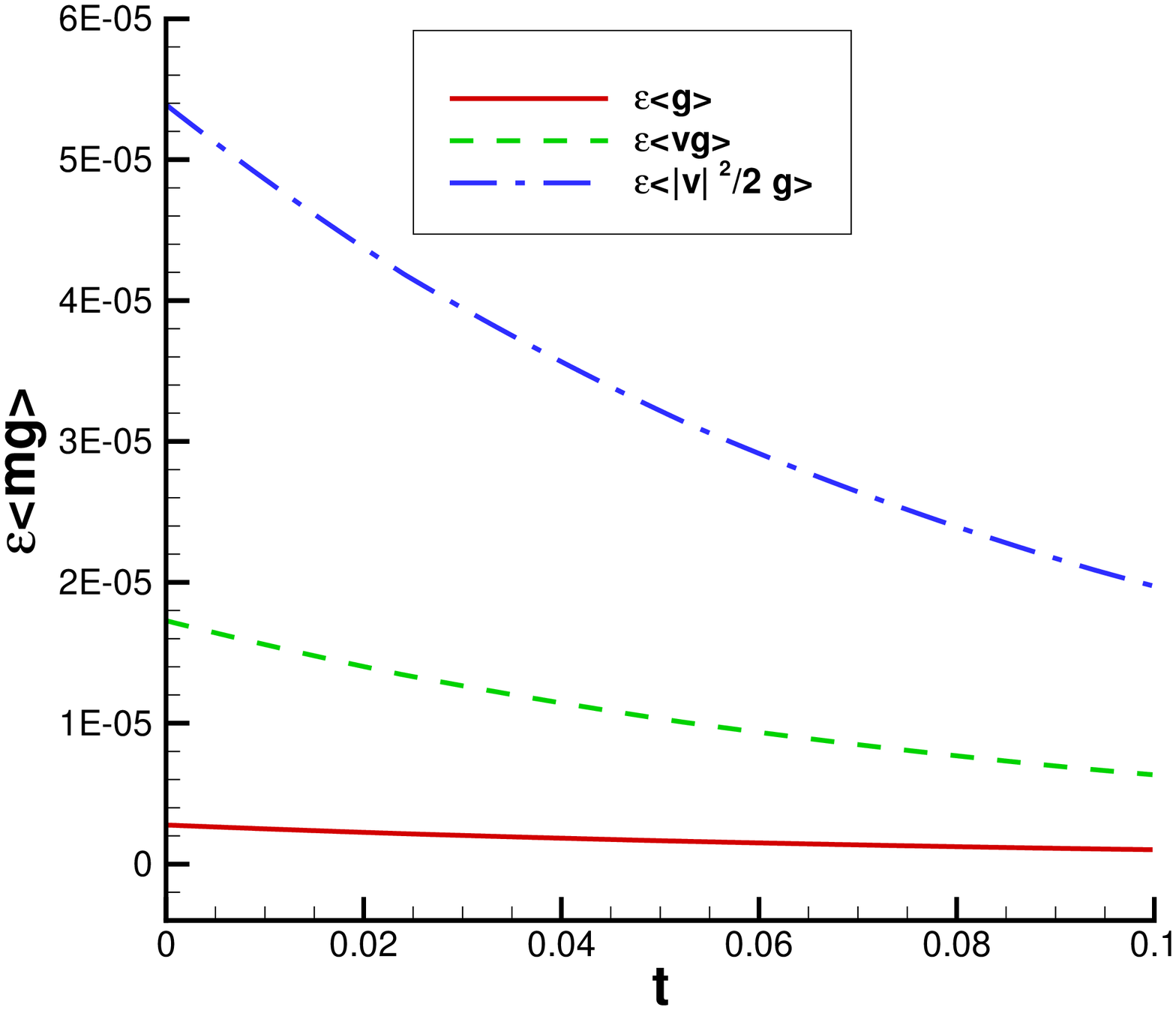},
\includegraphics[totalheight=2.5in]{./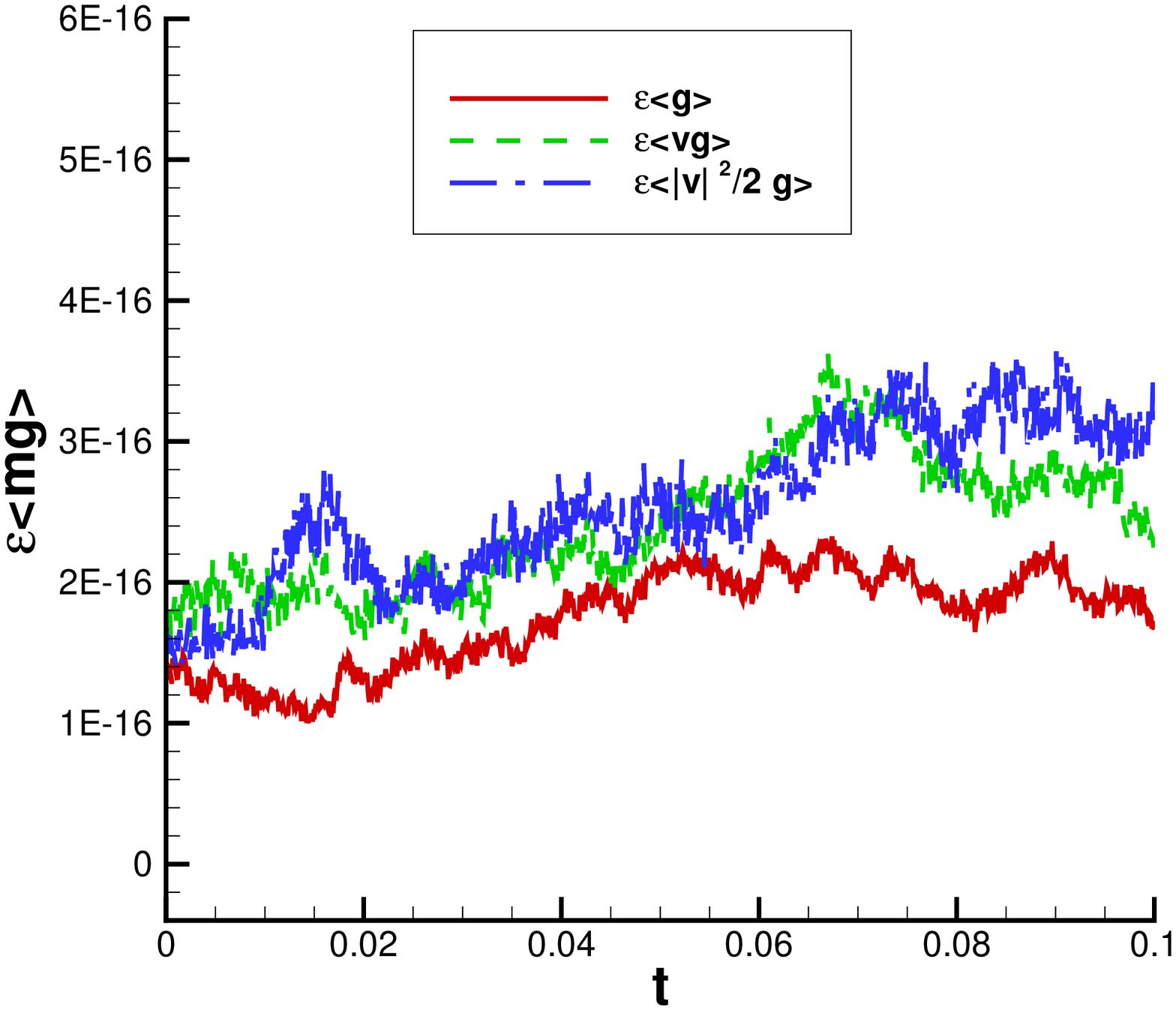}\\
\includegraphics[totalheight=2.5in]{./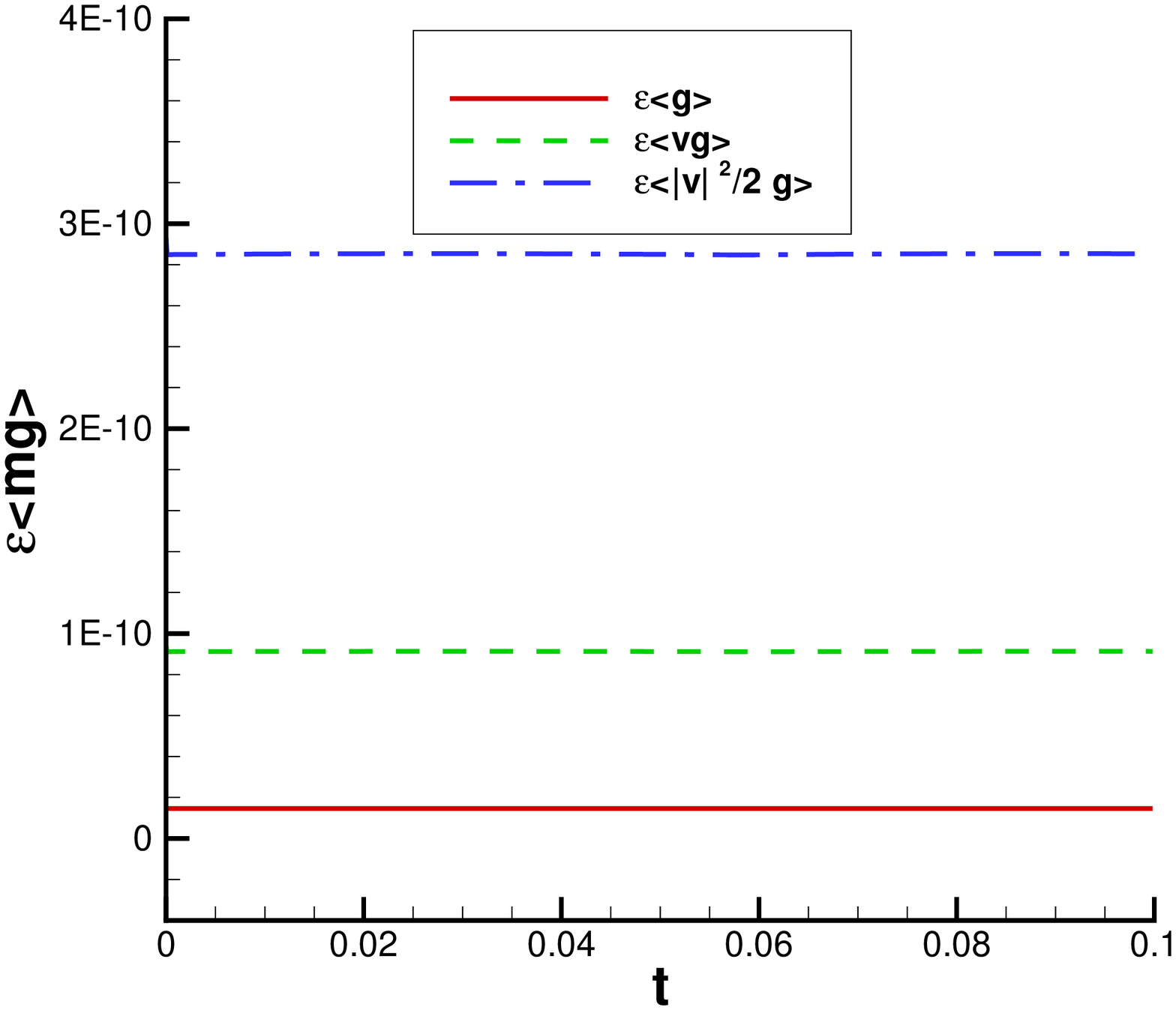},
\includegraphics[totalheight=2.5in]{./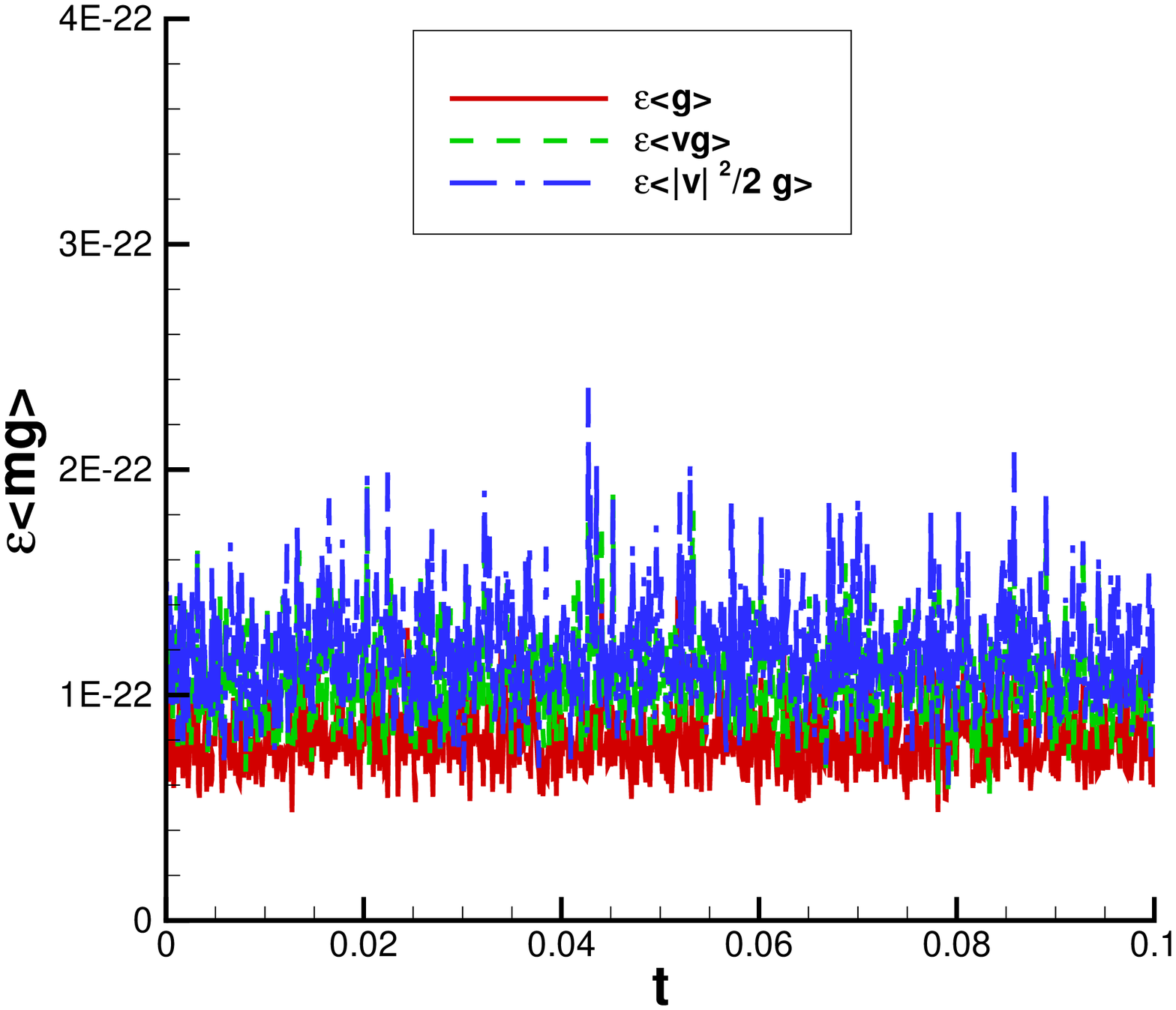}
\caption{$\eps \langle mg \rangle$ for Example \ref{ex41} with NDG3 on the spatial domain $[-\pi,\pi]$. Measured as the maximal value over $x$. $N_x=100$ and $N_v=100$.
Top: $\eps=1$; Bottom: $\eps=10^{-6}$. Left: $V_c=6$; Right: $V_c=12$. }
\label{fig0}
\end{figure}

\end{exa}

\begin{exa}
Next we consider the Sod shock tube problem with the initial conditions for the density,
the mean velocity and the pressure to be
\begin{eqnarray}
(\rho, u, p)=
\begin{cases}
(1, 0, 1), & \quad 0\le x \le 0.5, \\
(0.125, 0, 0.1), & \quad 0.5< x \le 1.
\end{cases}
\label{sod}
\end{eqnarray}
The initial distribution function $f$ is the Maxwellian \eqref{maxwellian} with $g(x,v,0)=0$.
The spatial domain $[-0.2, 1.2]$ is discretized with $N_x=50$ grid points and the velocity domain  $[-4.5, 4.5]$ is discretized with $N_v=100$ uniform points. The boundary conditions are taken to be the initial constant values at both ends in the $x$ direction.
We compute the solutions with NDG3 up to time $t=0.14$. In Fig. \ref{fig1}, we show the distribution function $f$ at $x=0.5$. It can be seen that when $\eps$ approaches $0$, the computed $f$ becomes close to a Maxwellian.
The density, the mean velocity, the temperature and the rescaled heat flux $Q_{\eps}$ are presented in Fig. \ref{fig2}, which
are similar to the results in \cite{bennoune2008uniformly}. Here the rescaled heat flux is defined as $Q_{\eps}=\frac{Q}{\eps}=\left \langle \frac{|v-u|^2}{2}(v-u)g \right \rangle$, and it
approximates $-q$ in \eqref{eq: sigma_q}. The conserved properties are demonstrated by plotting $\eps\langle mg \rangle$ in Fig. \ref{fig3}, with $m=(1, v, |v|^2/2)^t$ and for $\eps=1$ and $\eps=10^{-6}$.
For this problem with discontinuous solutions, the conservation errors of $\eps\langle mg \rangle$ can
also be greatly improved by doubling the domain $\Omega_v$.
%With $N_v=100$ and $N_v=200$ for the left and right figures respectively, similarly we can see
%the numerical quantities are not improved by refining the mesh in the $v$ direction.
%With the TVB limiter, the magnitude of errors would be significantly reduced for $\eps=10^{-6}$.

We also take the Sod problem as a representative example to show that the proposed NDG3-IMEX method for the BGK equation is asymptotically equivalent,  up to $\mathcal{O}(\eps^2)$, to the 3rd order local DG method as described in Section \ref{sec:3.3} with $K=3$ for directly solving \eqref{cns-bgk-1d} when $0<\eps \ll 1$. In Fig. \ref{fig41}, the relative differences between the density, velocity and temperature versus $\eps$ are plotted in the logarithmic scale. When $\eps \ll 1$, the expected second order difference with respect to $\eps$ is observed. The relative difference is computed as
\beq
\text{relative difference of } w = \frac{\sum_{i,k}\omega_k|w_1(x_i^k)-w_2(x_i^k)|}{\sum_{i,k}\omega_k|w_1(x_i^k)|},
\eeq
where $w_1$ is the numerical solution of NDG3 solving the BGK equation, and $w_2$ is the numerical solution of the 3rd order local DG method for the compressible Navier-Stokes system,
with $w$ to be $\rho$, $u$ and $T$, $x_i^k$ and $\omega_k$ are the Gaussian quadrature point and corresponding weight in cell $I_i$. Here we take the domain $[-0.2, 1.2]\times[-9, 9]$ and $t=0.01$. The mesh is $N_x=50$ and $N_v=100$. For both schemes, the TVB limiter is not used. Similar results hold for other examples with constant $\eps$.
\begin{figure}[ht]
\centering
\includegraphics[totalheight=2.5in]{./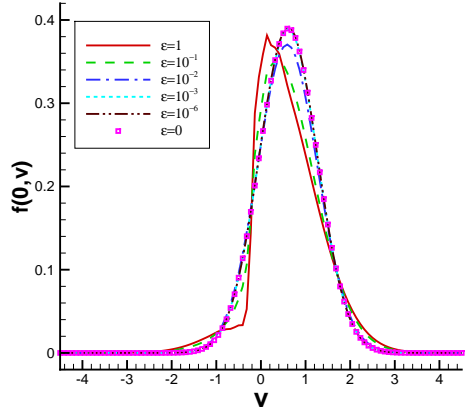}
\caption{Distribution function $f$ for the Sod problem (\ref{sod}) at $x=0.5$ as a function of $v\in[-4.5, 4.5]$.
$N_x=50$ and $N_v=100$ for NDG3 at $t=0.14$. As $\eps$ goes to $0$, $f$ becomes close to a Maxwellian.
With TVB limiter and $M_{tvb}=20$.}
\label{fig1}
\end{figure}

\begin{figure}[ht]
\centering
\includegraphics[totalheight=2.5in]{./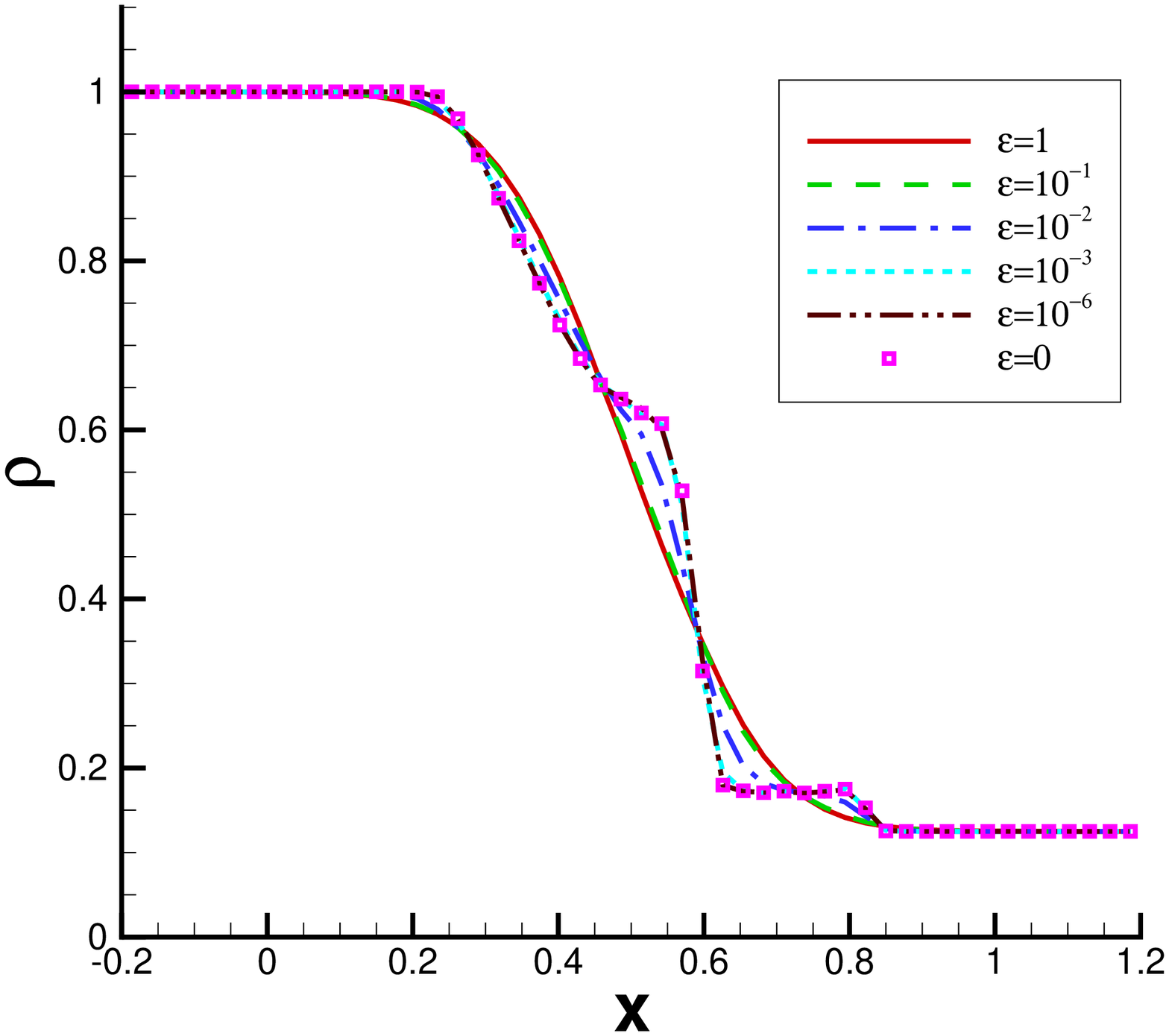},
\includegraphics[totalheight=2.5in]{./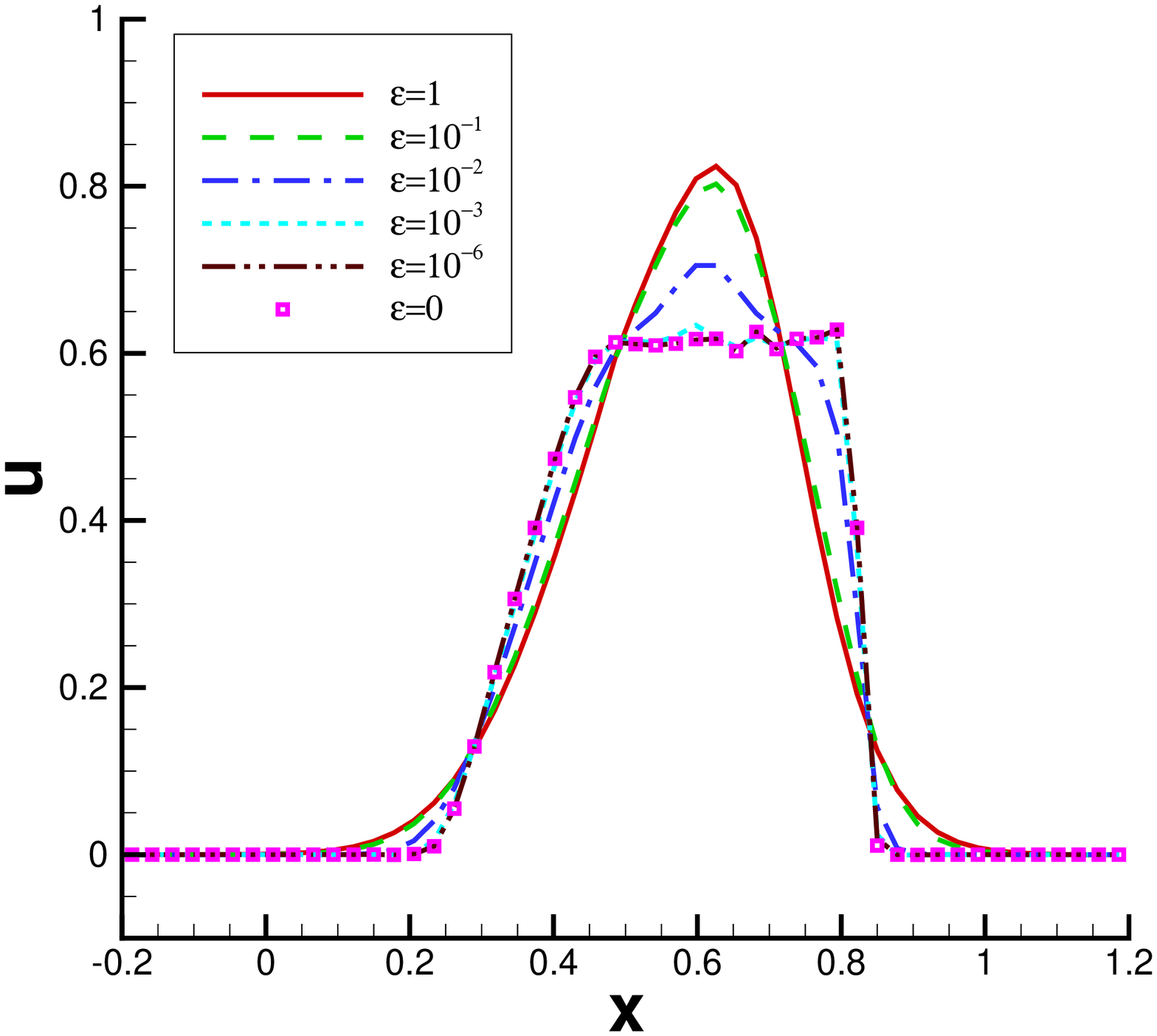}\\
\includegraphics[totalheight=2.5in]{./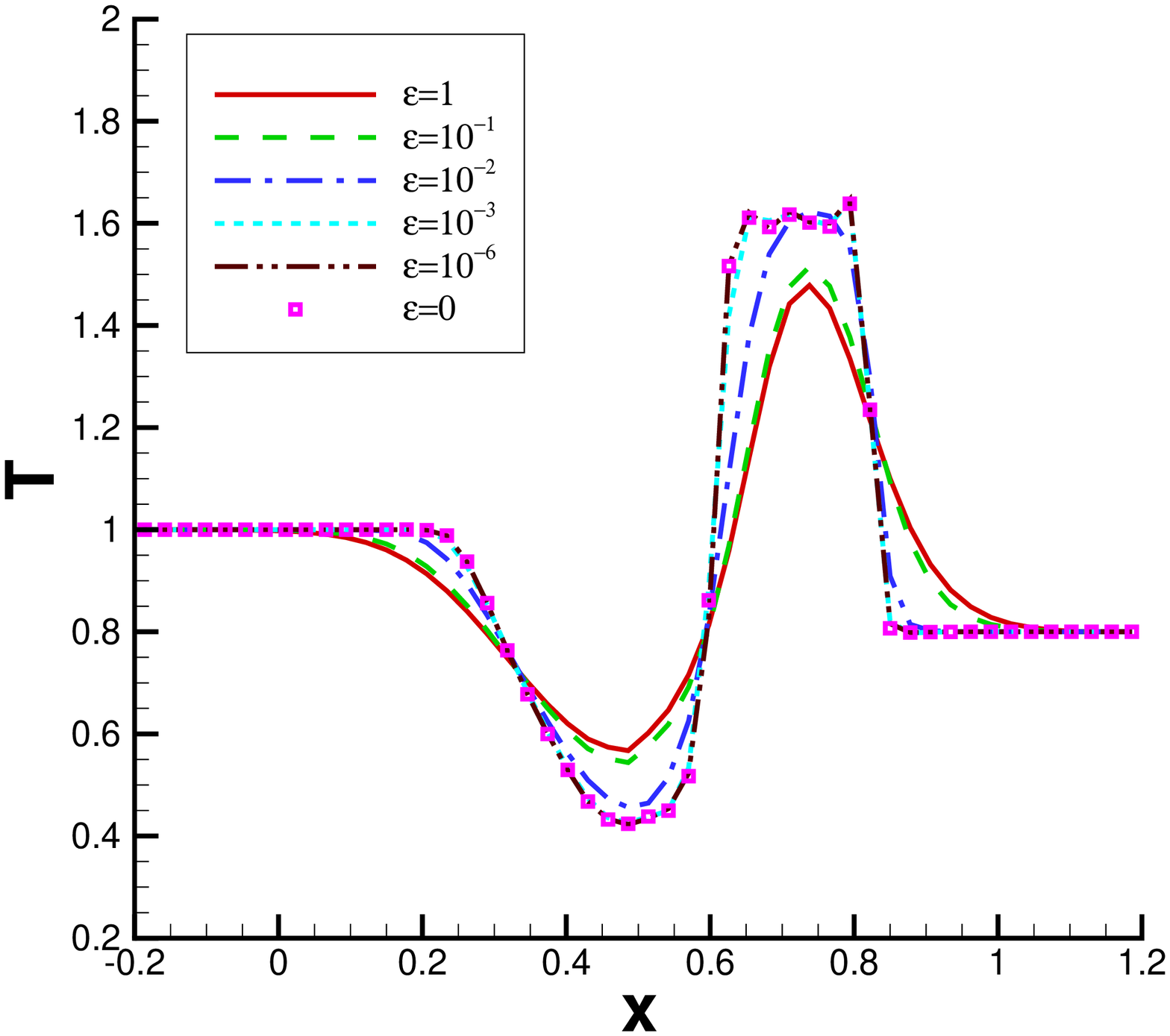},
\includegraphics[totalheight=2.5in]{./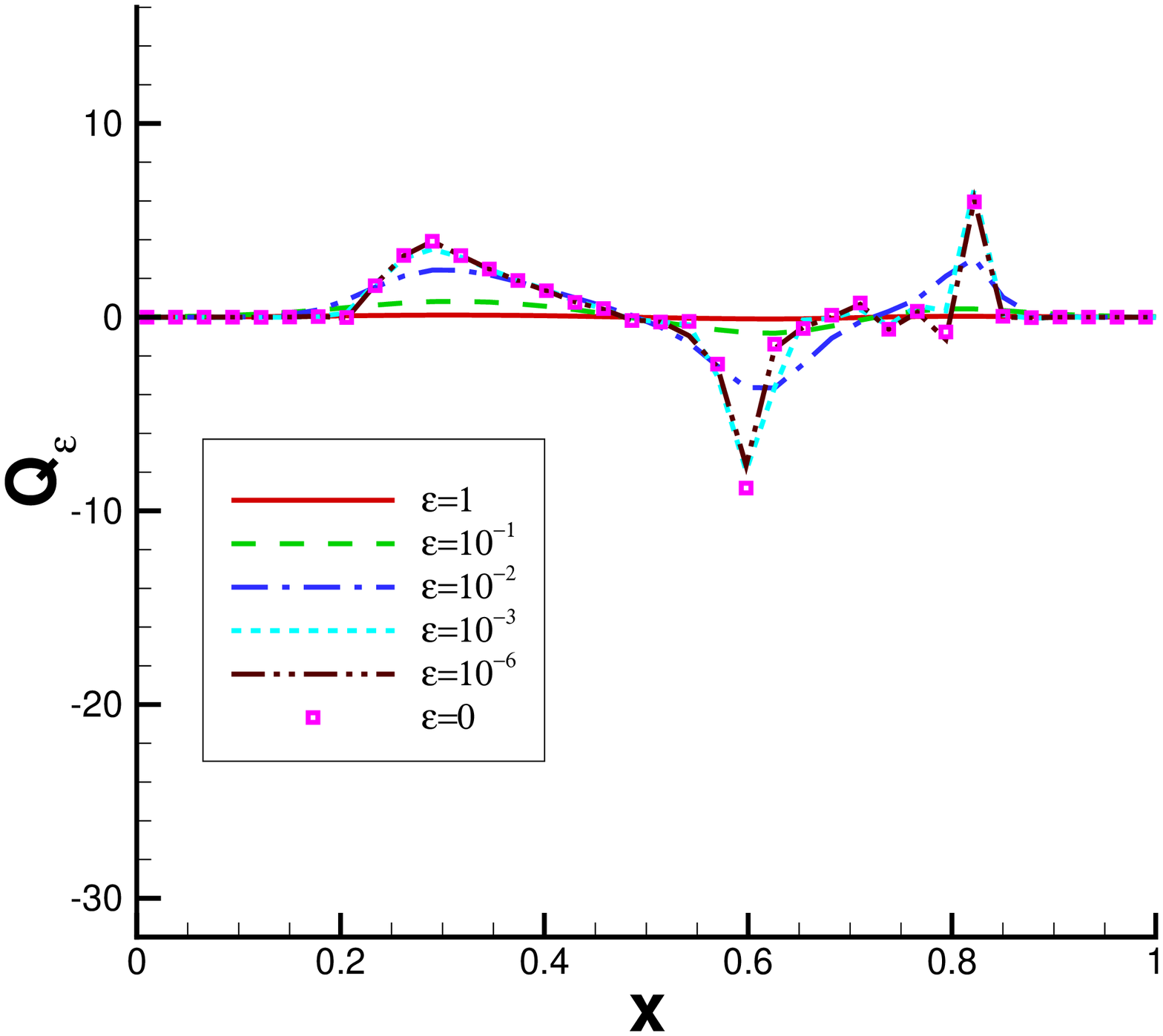}
\caption{Numerical solutions for the Sod problem (\ref{sod}) at $t=0.14$ with NDG3.
$N_x=50$ and $N_v=100$ on the domain $[-0.2, 1.2]\times [-4.5, 4.5]$. Top: density, mean velocity;
Bottom: temperature, rescaled heat flux. With TVB limiter and $M_{tvb}=20$.}
\label{fig2}
\end{figure}

\begin{figure}[ht]
\centering
\includegraphics[totalheight=2.5in]{./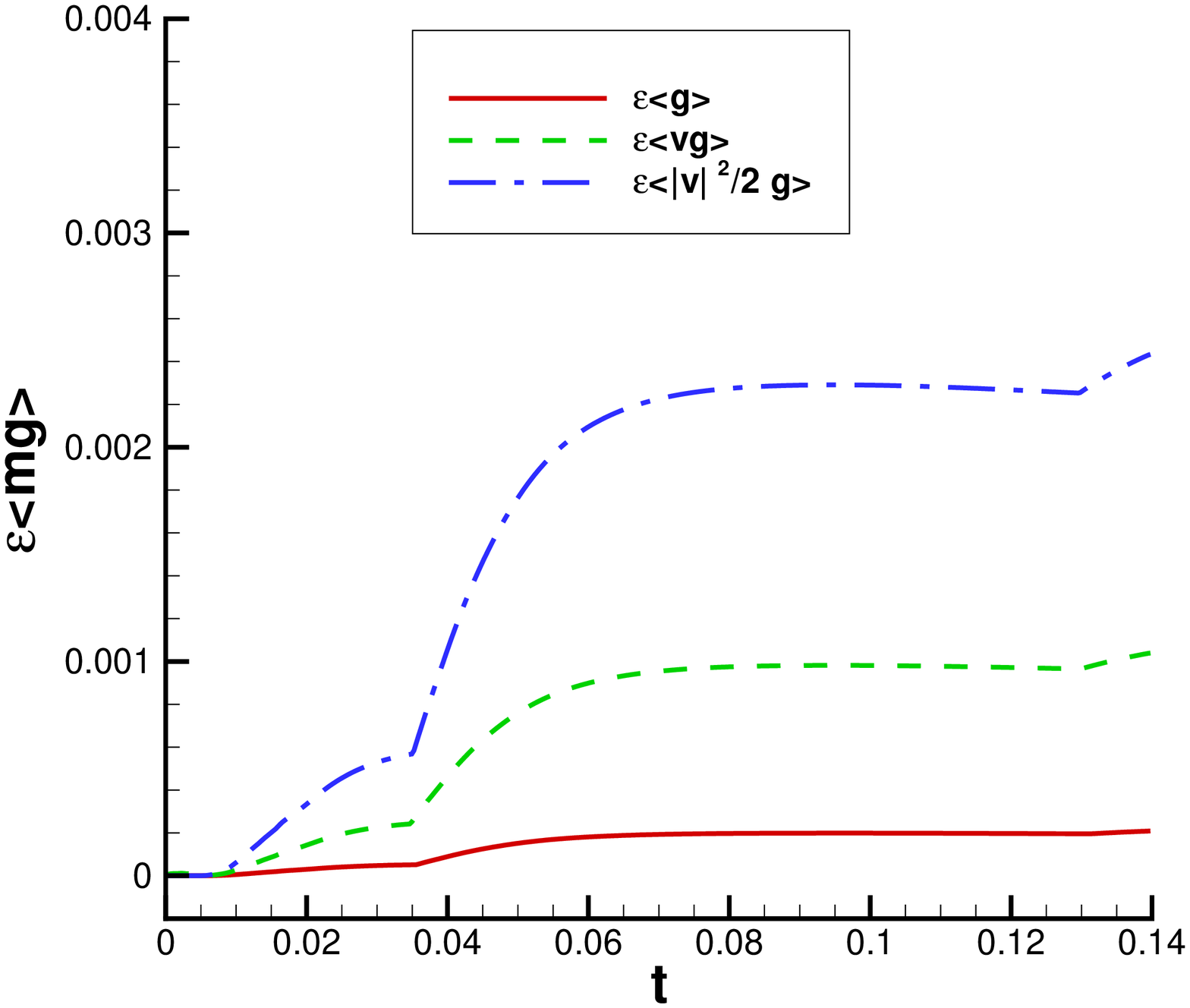},
\includegraphics[totalheight=2.5in]{./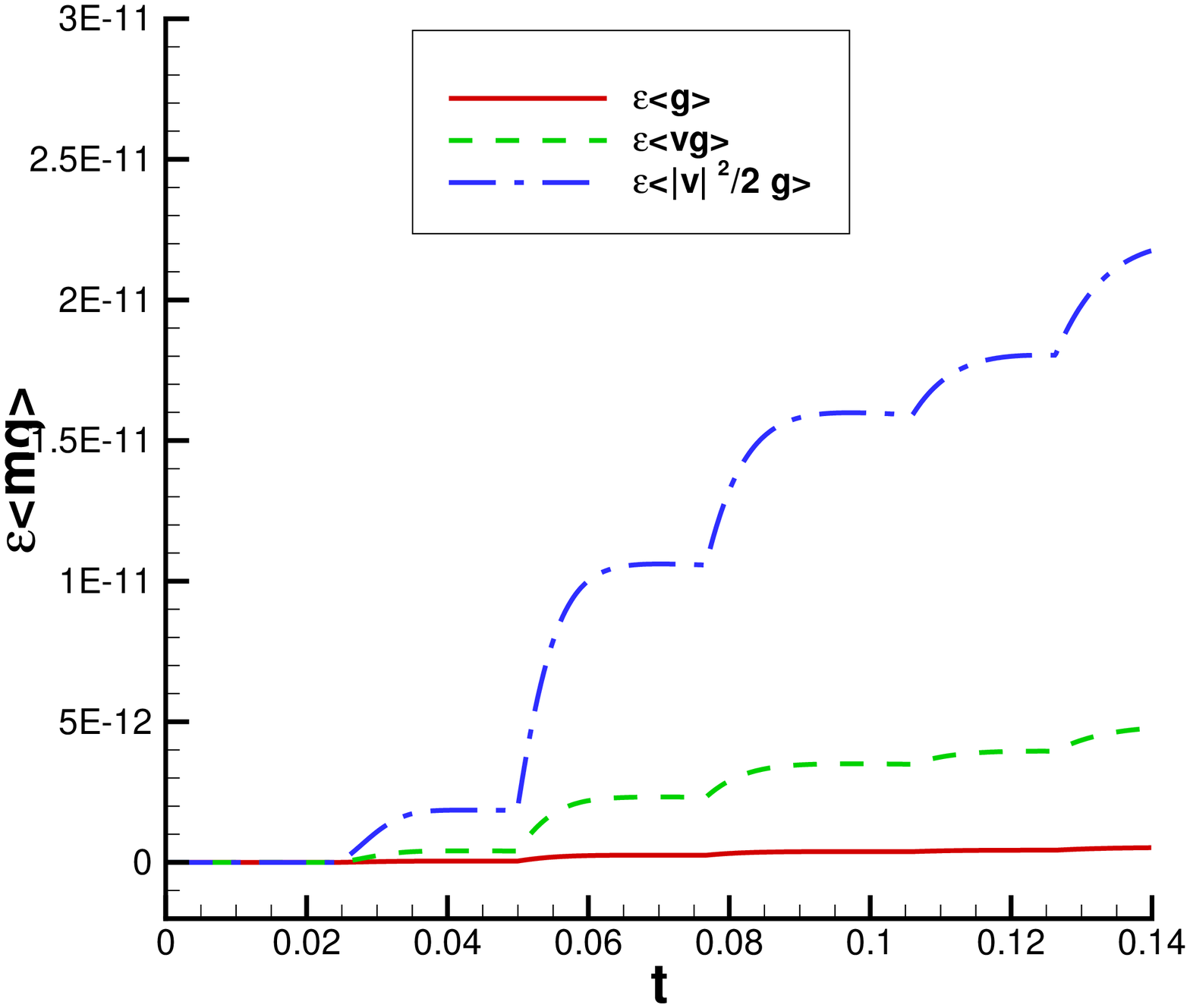}\\
\includegraphics[totalheight=2.5in]{./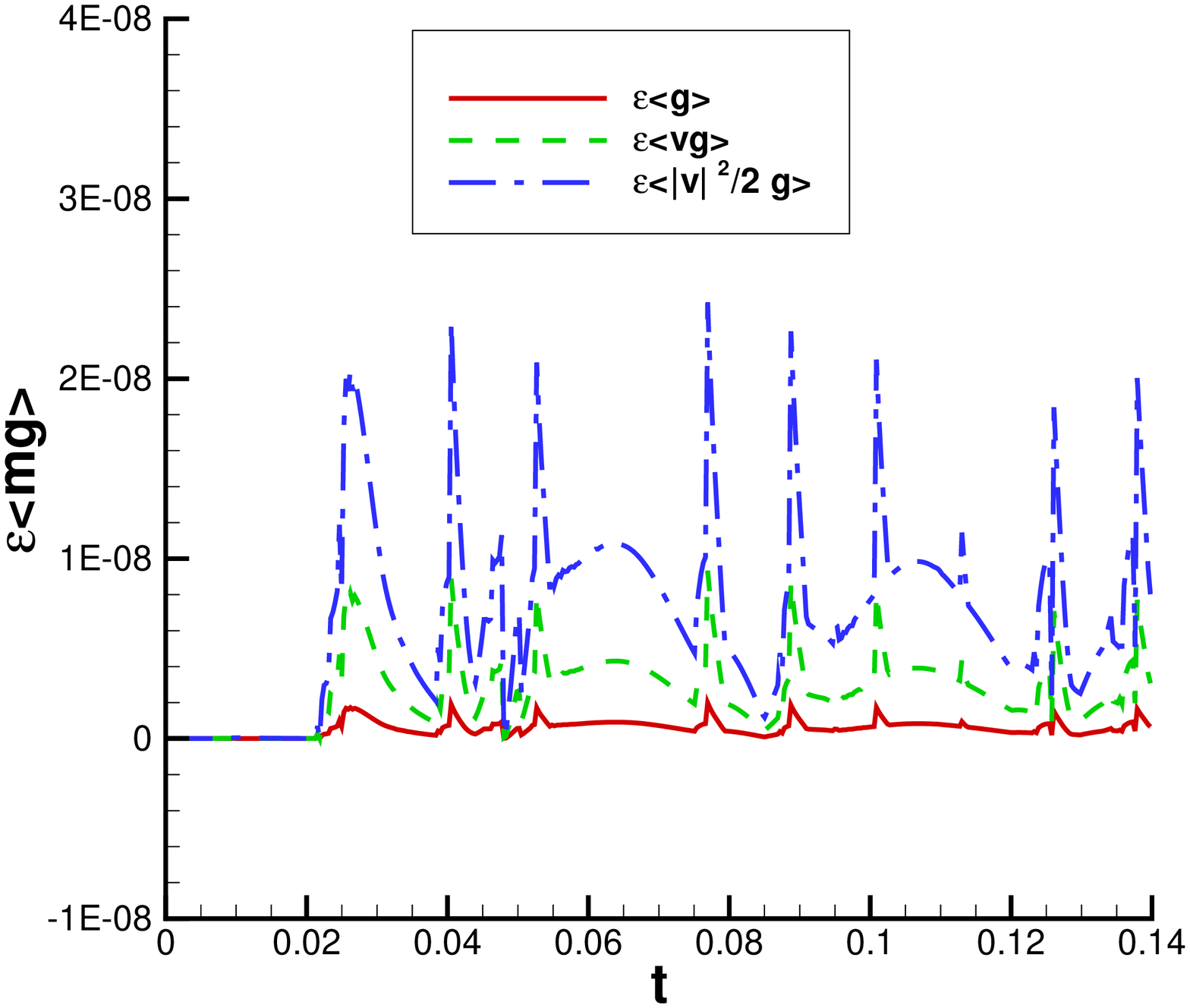},
\includegraphics[totalheight=2.5in]{./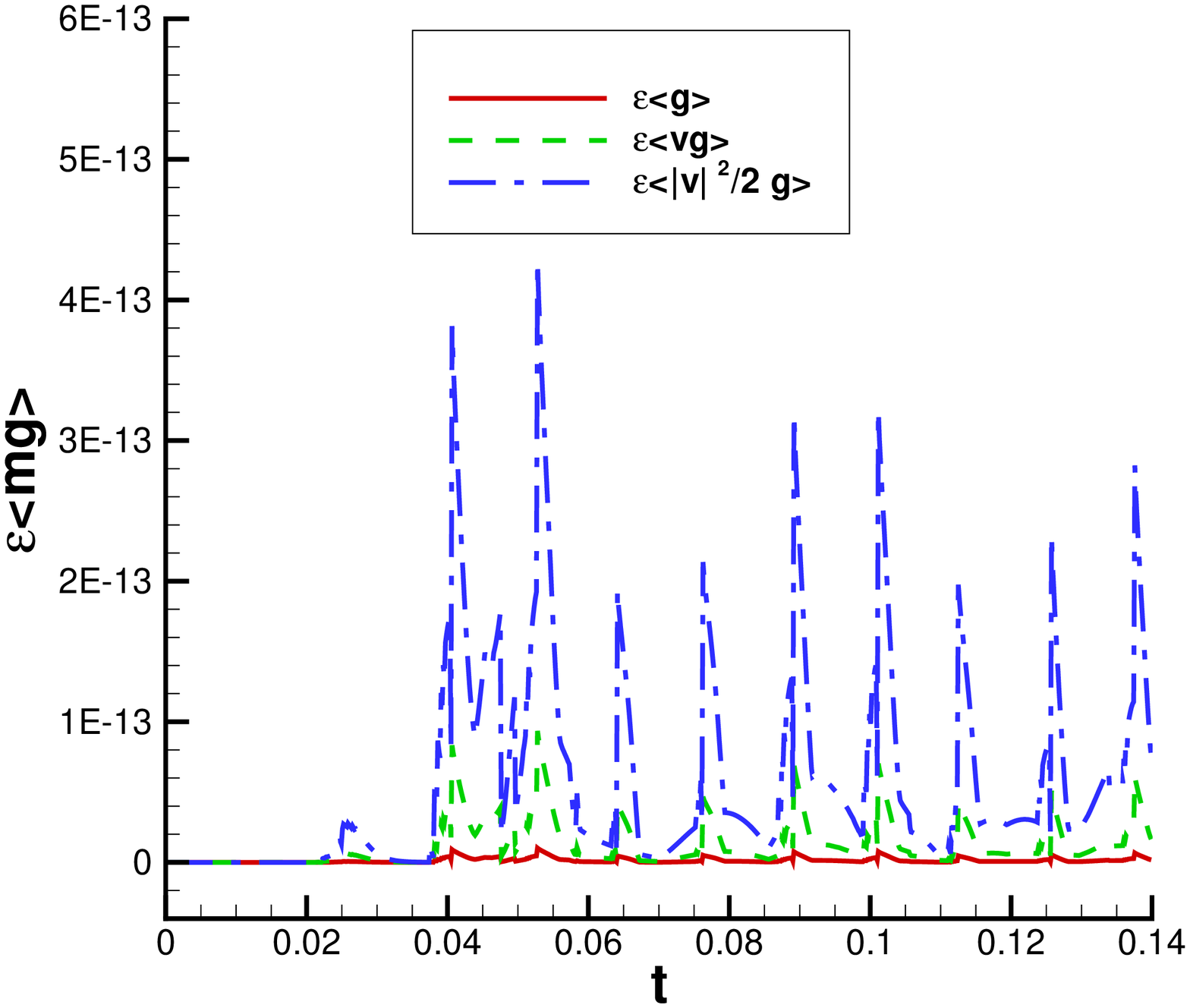}
\caption{$\eps \langle mg \rangle$ for the Sod problem (\ref{sod}) with NDG3 on the spatial domain
$[-0.2, 1.2]$. Measured as the maximal value over $x$. $N_x=50$ and $N_v=100$. Top: $\eps=1$; Bottom: $\eps=10^{-6}$. Left: $V_c=4.5$; Right: $V_c=9$. With TVB limiter and $M_{tvb}=20$.}
\label{fig3}
\end{figure}

\begin{figure}[ht]
\centering
\includegraphics[totalheight=2.5in]{./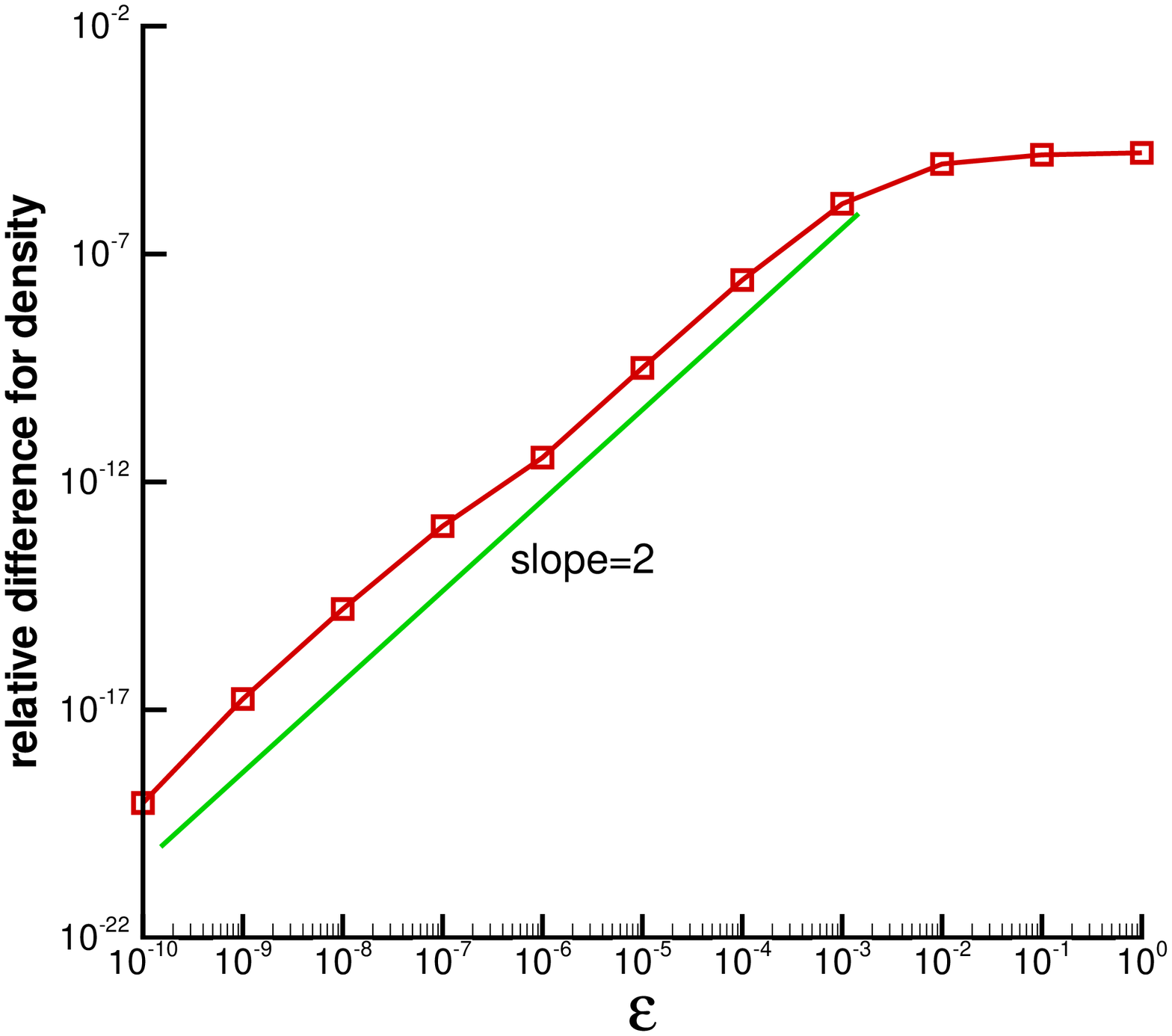},
\includegraphics[totalheight=2.5in]{./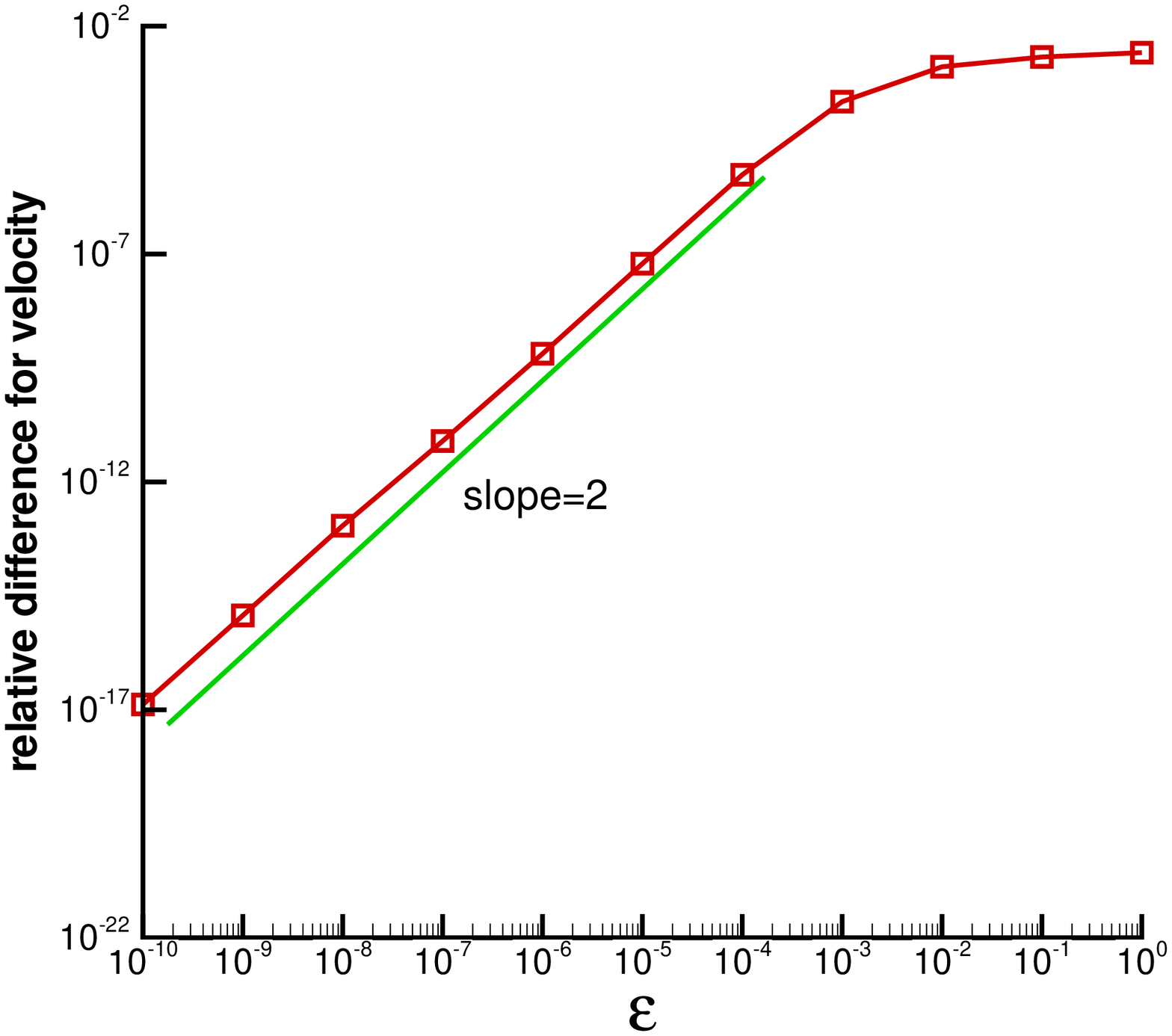} \\
\includegraphics[totalheight=2.5in]{./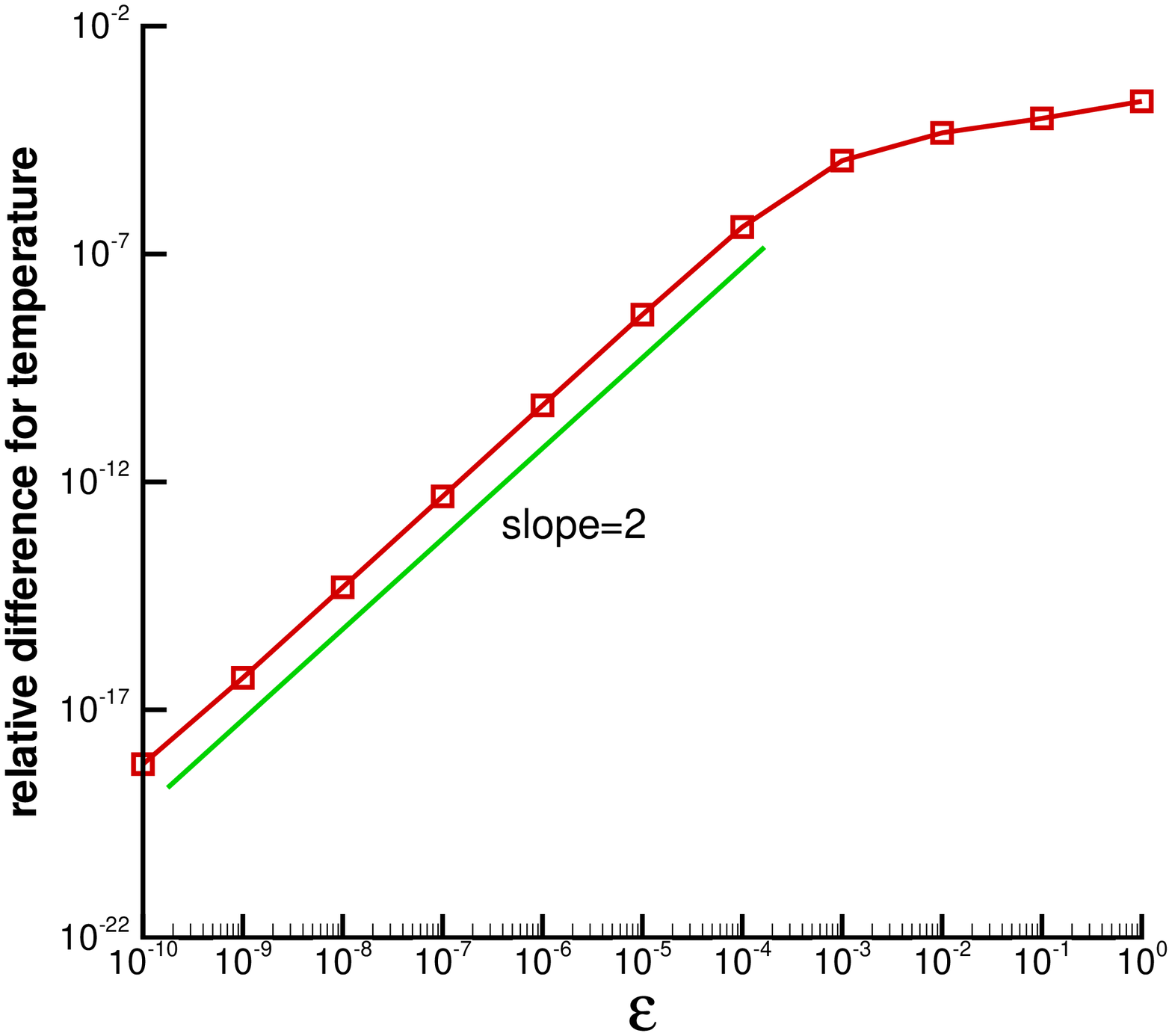}
\caption{The relative differences (in the logarithmic scale) for the Sod problem (\ref{sod}) between the solutions by NDG3 solving the BGK equation and those by the third order local DG method in \eqref{eq: compact_cns_rk0}-\eqref{eq: q_rk} for the compressible Navier-Stokes system. Top left: density. Top right: velocity. Bottom: temperature. $N_x=50$ and $N_v=100$ on the domain $[-0.2, 1.2]\times [-9, 9]$. Without TVB limiter.}
%{The relative differences (in the logarithmic scale) for the Sod problem (\ref{sod}) between the densities (left), the velocities (middle) and the temperatures (right). $t=0.01$ with NDG3 and the local DG method in \eqref{eq: compact_cns_rk0}-\eqref{eq: q_rk}. $N_x=50$ and $N_v=100$ on the domain $[-0.2, 1.2]\times [-9, 9]$. Without TVB limiter.}
\label{fig41}
\end{figure}
\end{exa}

\begin{exa}
In this example we take the initial conditions for the density, the mean velocity and the pressure to be the same as the classic Lax shock tube problem \cite{harten1987uniformly,shu1989efficient}, which are
\begin{eqnarray}
(\rho, u, p)=
\begin{cases}
(0.445, 0.698, 3.528), & \quad 0\le x \le 0.5, \\
(0.5, 0, 0.571), & \quad 0.5< x \le 1.
\end{cases}
\label{lax}
\end{eqnarray}
We compute the solutions with NDG3  up to time $t=0.1$. The initial distribution function $f$ is the Maxwellian \eqref{maxwellian} with $g(x,v,0)=0$. The spatial domain $[-0.5, 1.5]$ is discretized with $N_x=100$ grid points, and the velocity domain $[-8, 8]$ is discretized with $N_v=100$ uniform points. The boundary conditions are taken to be the initial constant values at both ends in the $x$ direction. We show the distribution function $f$ at $x=0.5$ in Fig. \ref{fig4}. The density, the mean velocity, the temperature and the rescaled heat flux are shown in Fig. \ref{fig5}. The conserved properties are demonstrated by plotting $\eps\langle mg \rangle$ in Fig. \ref{fig6}, with $m=(1, v, |v|^2/2)^t$ and for $\eps=1$ and $\eps=10^{-6}$.

\begin{figure}[ht]
\centering
\includegraphics[totalheight=2.5in]{./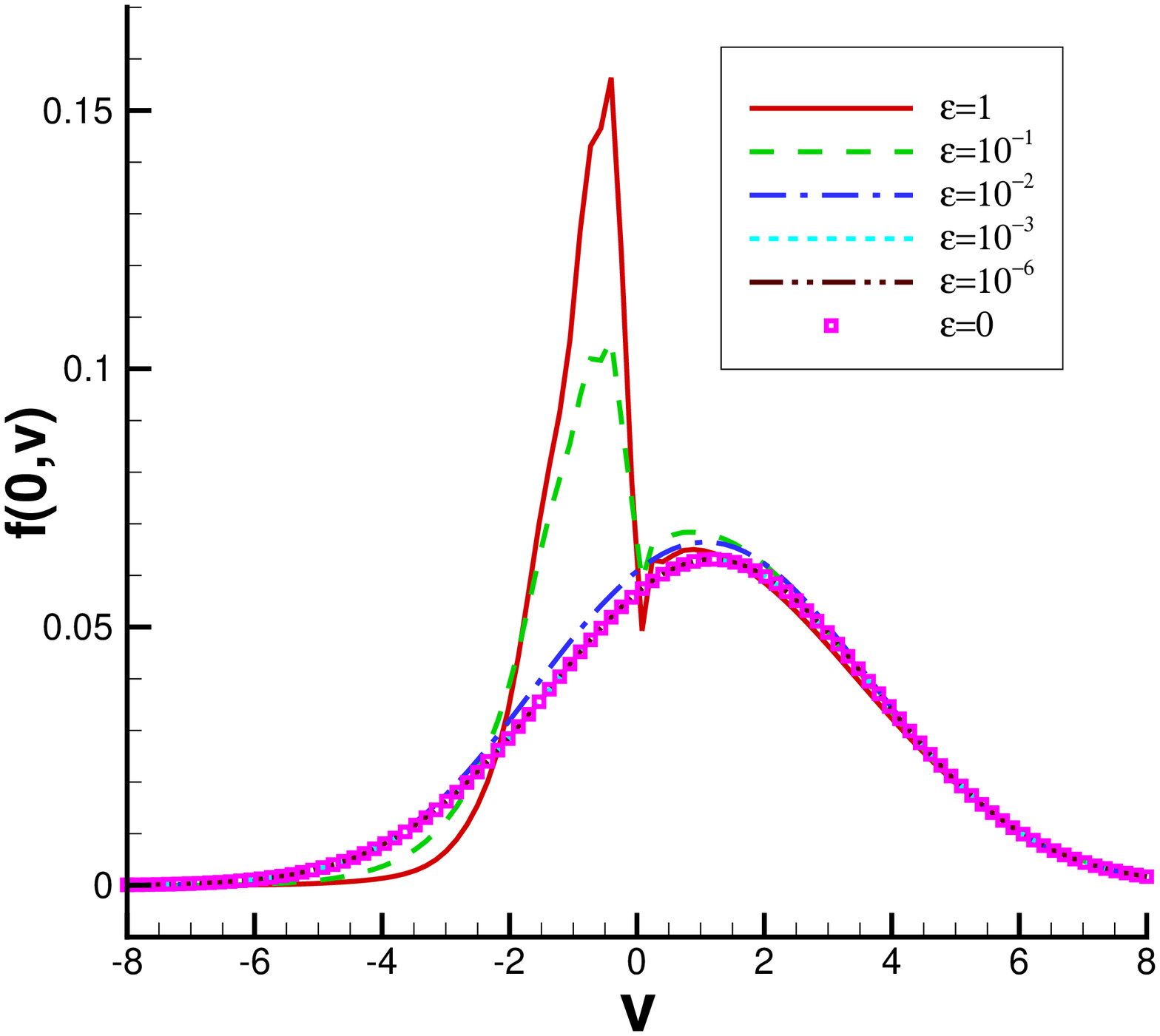}
\caption{Distribution function $f$ for the Lax problem (\ref{lax}) at $x=0.5$ as a function of $v\in[-8, 8]$.
$N_x=100$ and $N_v=100$ for NDG3 at $t=0.1$. As $\eps$ goes to $0$, $f$ becomes close to a Maxwellian.
With TVB limiter and $M_{tvb}=20$.}
\label{fig4}
\end{figure}

\begin{figure}[ht]
\centering
\includegraphics[totalheight=2.5in]{./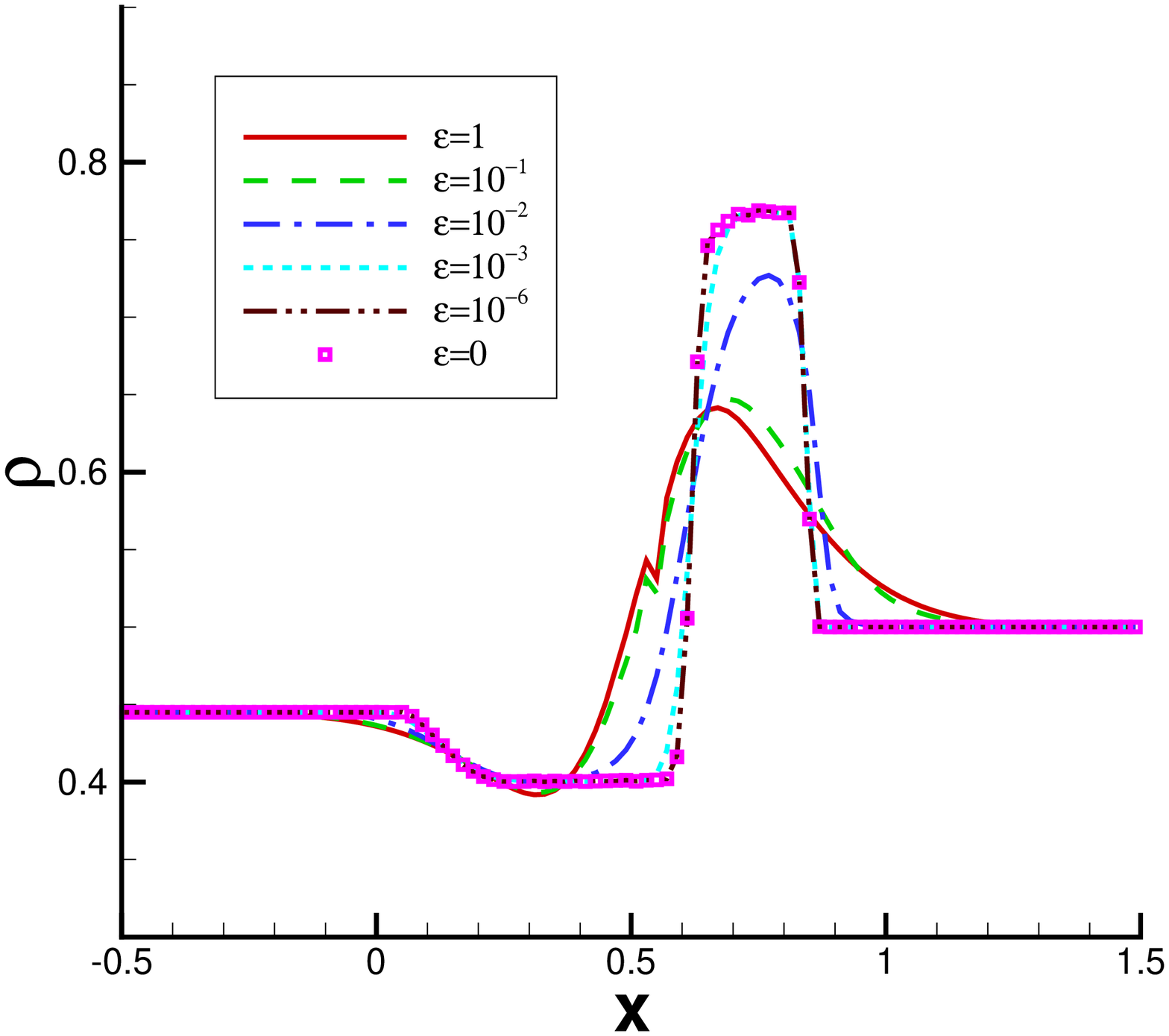},
\includegraphics[totalheight=2.5in]{./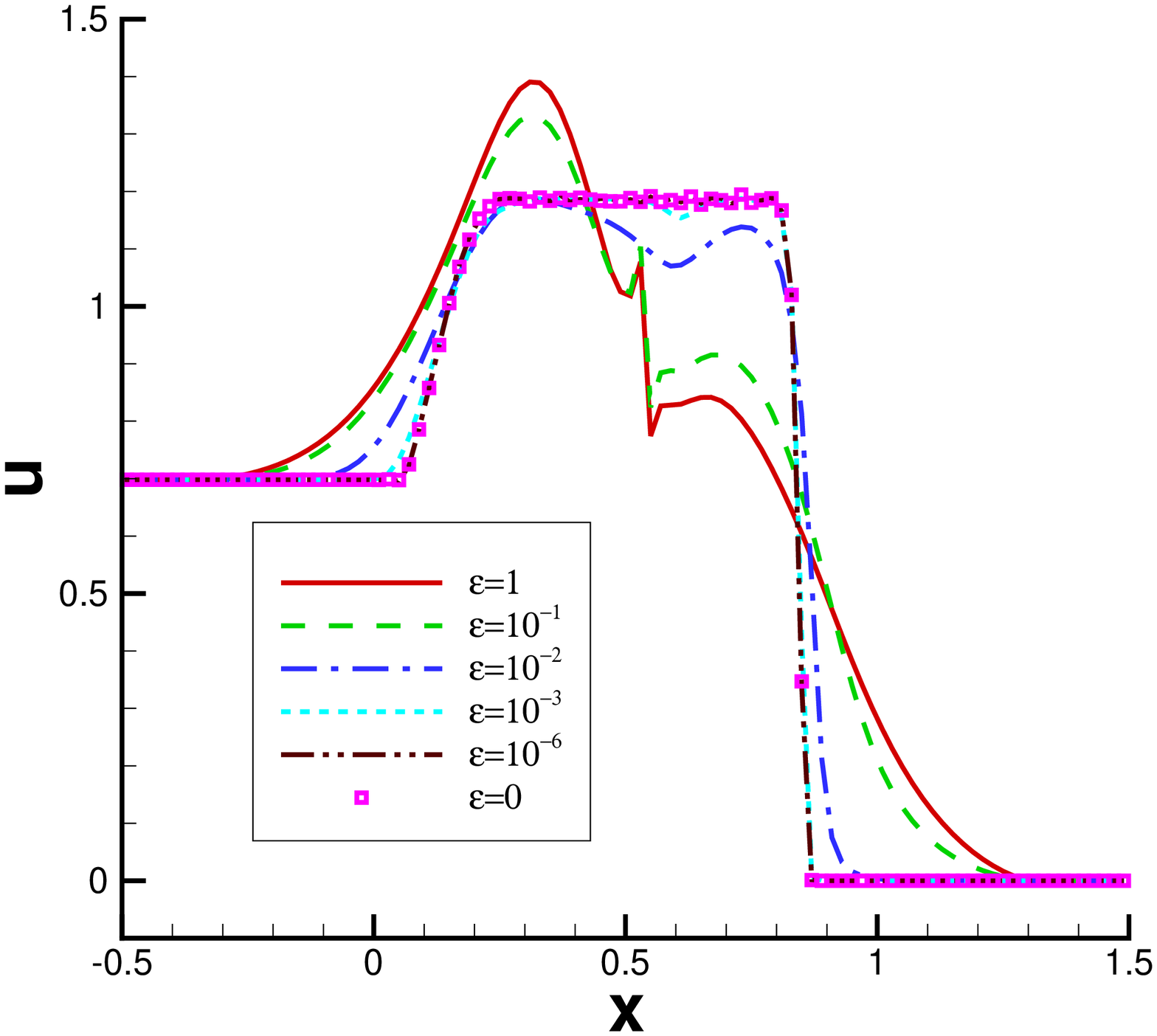}\\
\includegraphics[totalheight=2.5in]{./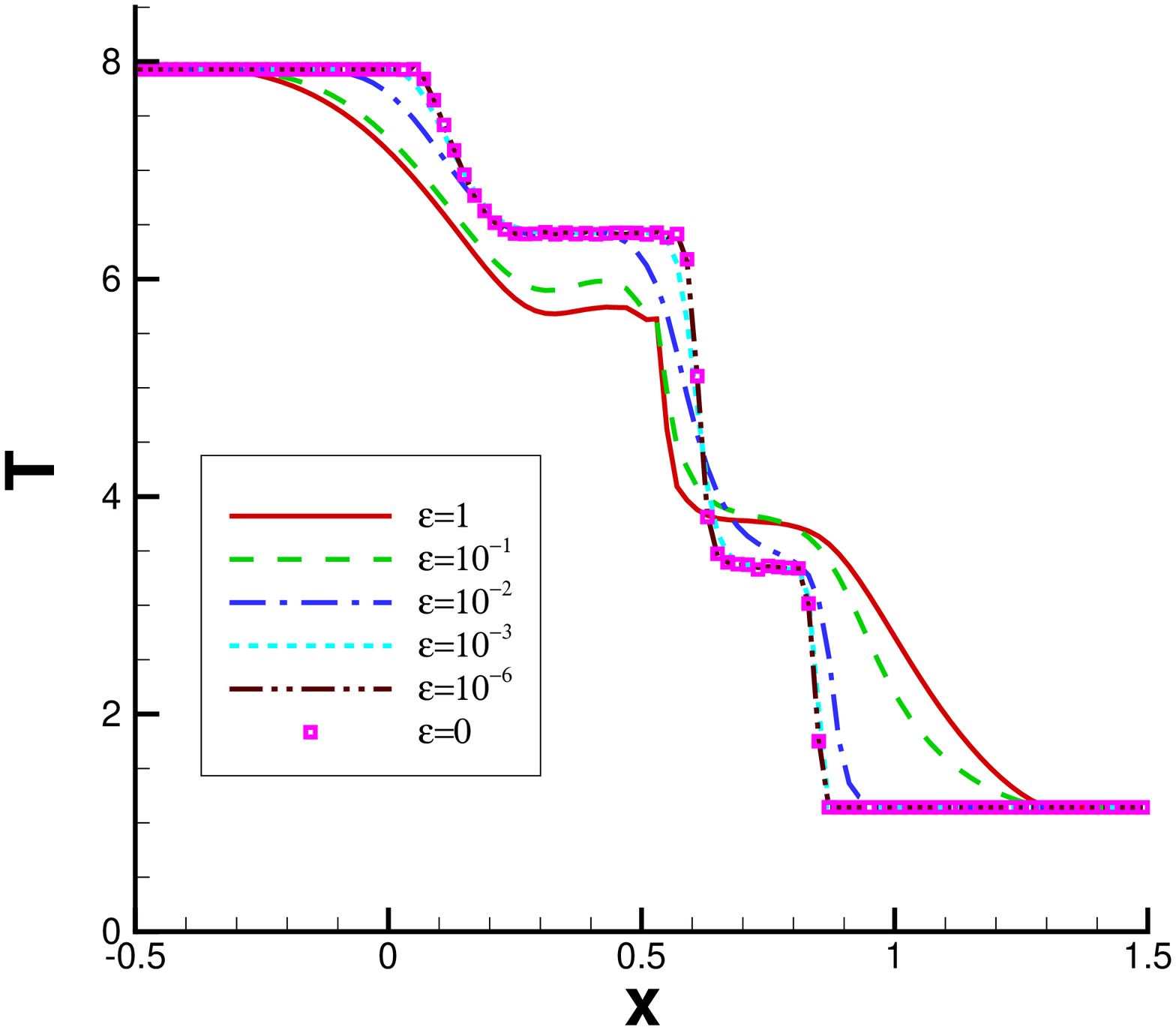},
\includegraphics[totalheight=2.5in]{./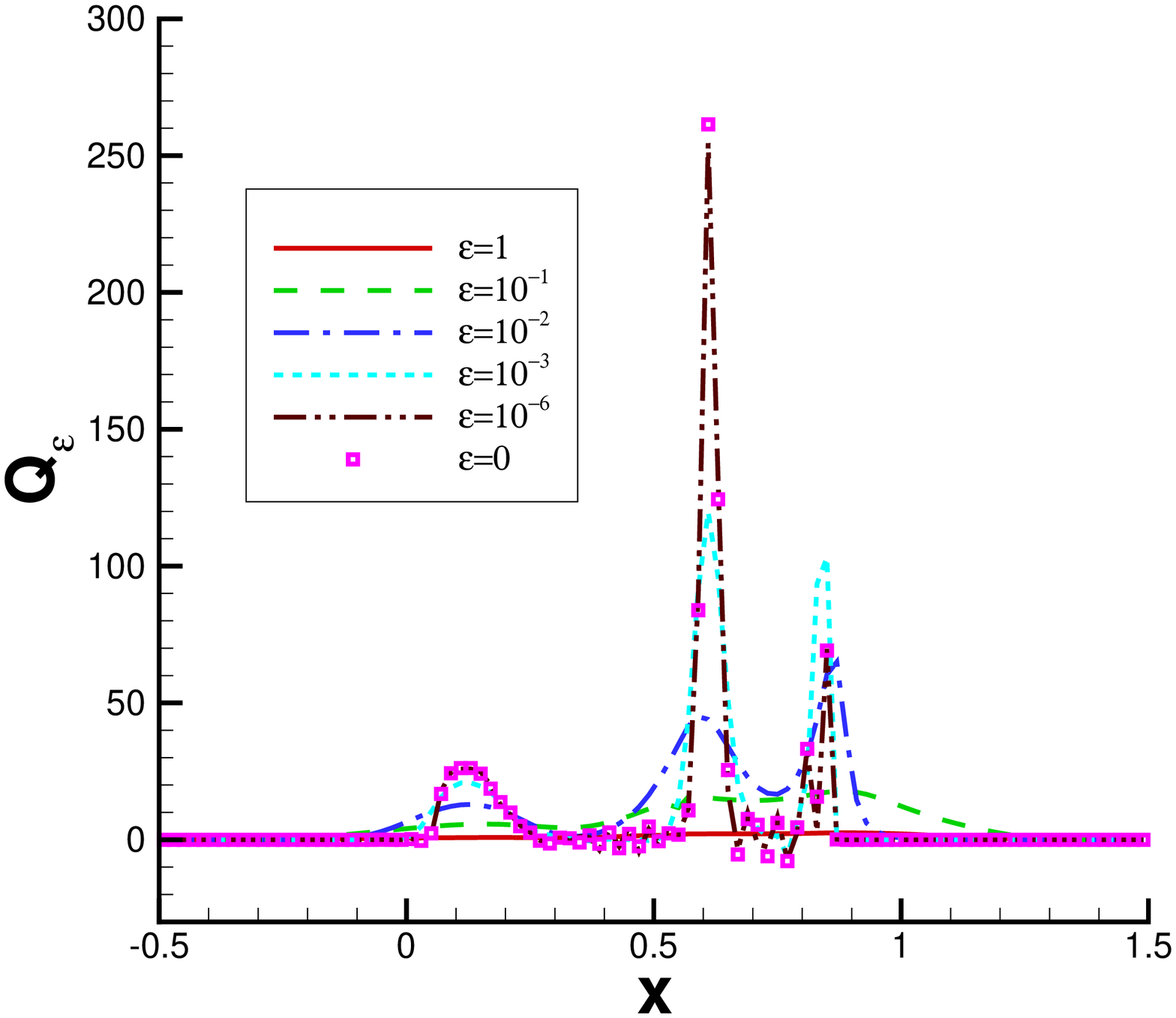}
\caption{Numerical solutions for the Lax problem (\ref{lax}) at $t=0.1$ with NDG3.
$N_x=100$ and $N_v=100$ on the domain $[-0.5, 1.5]\times [-8, 8]$. Top: density, mean velocity;
Bottom: temperature, rescaled heat flux. With TVB limiter and $M_{tvb}=20$.}
\label{fig5}
\end{figure}

\begin{figure}[ht]
\centering
\includegraphics[totalheight=2.5in]{./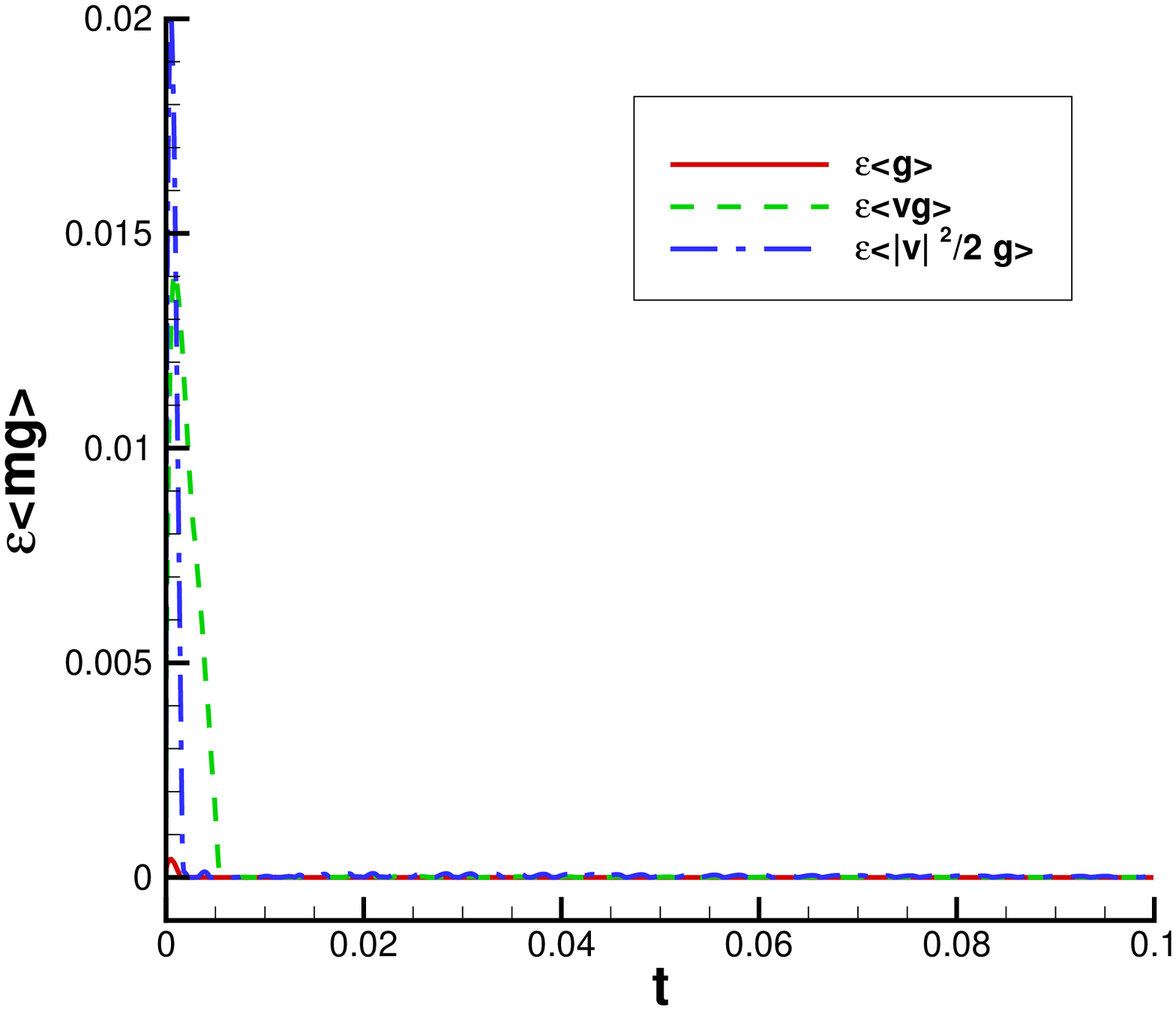},
\includegraphics[totalheight=2.5in]{./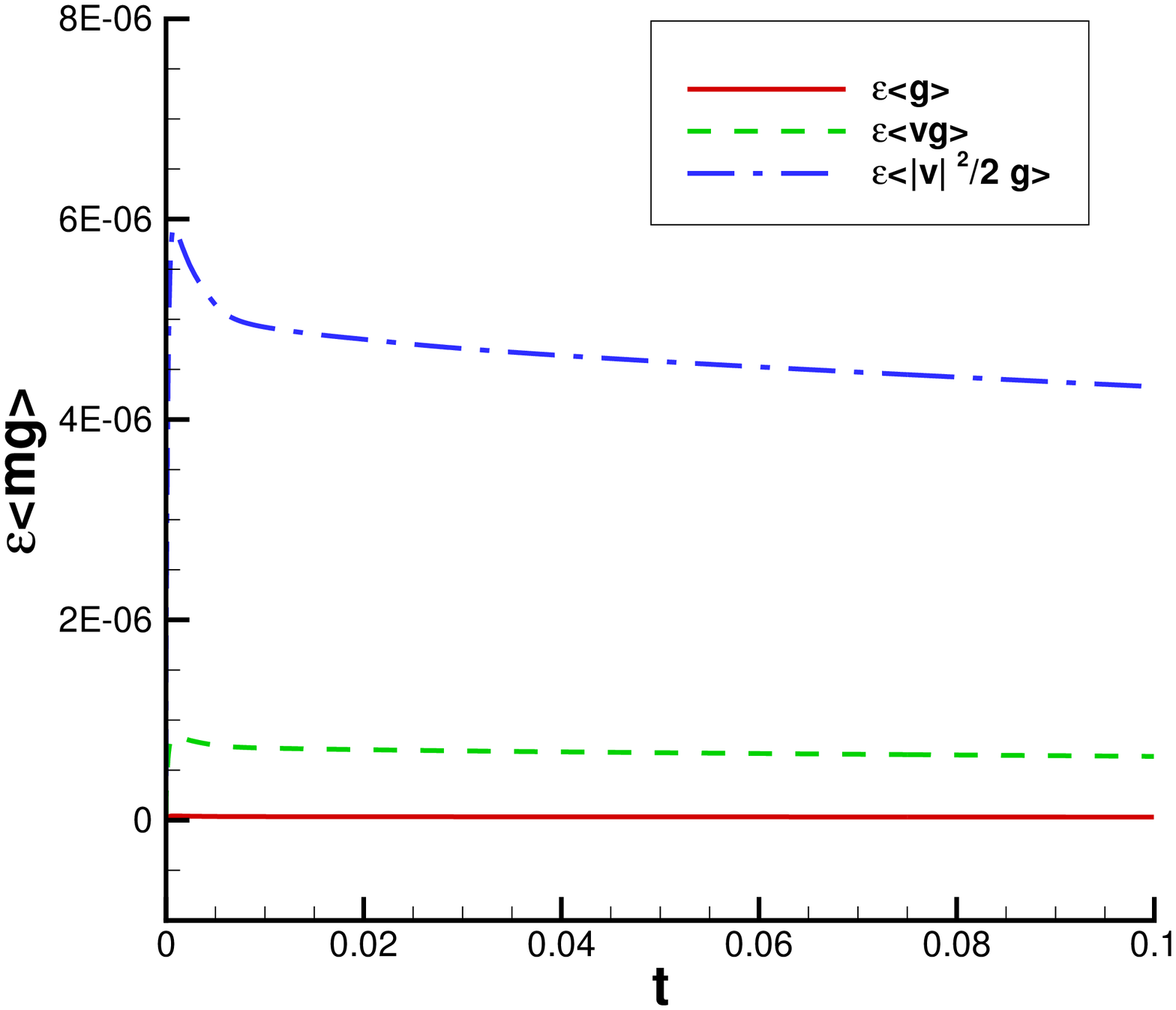} \\
\includegraphics[totalheight=2.5in]{./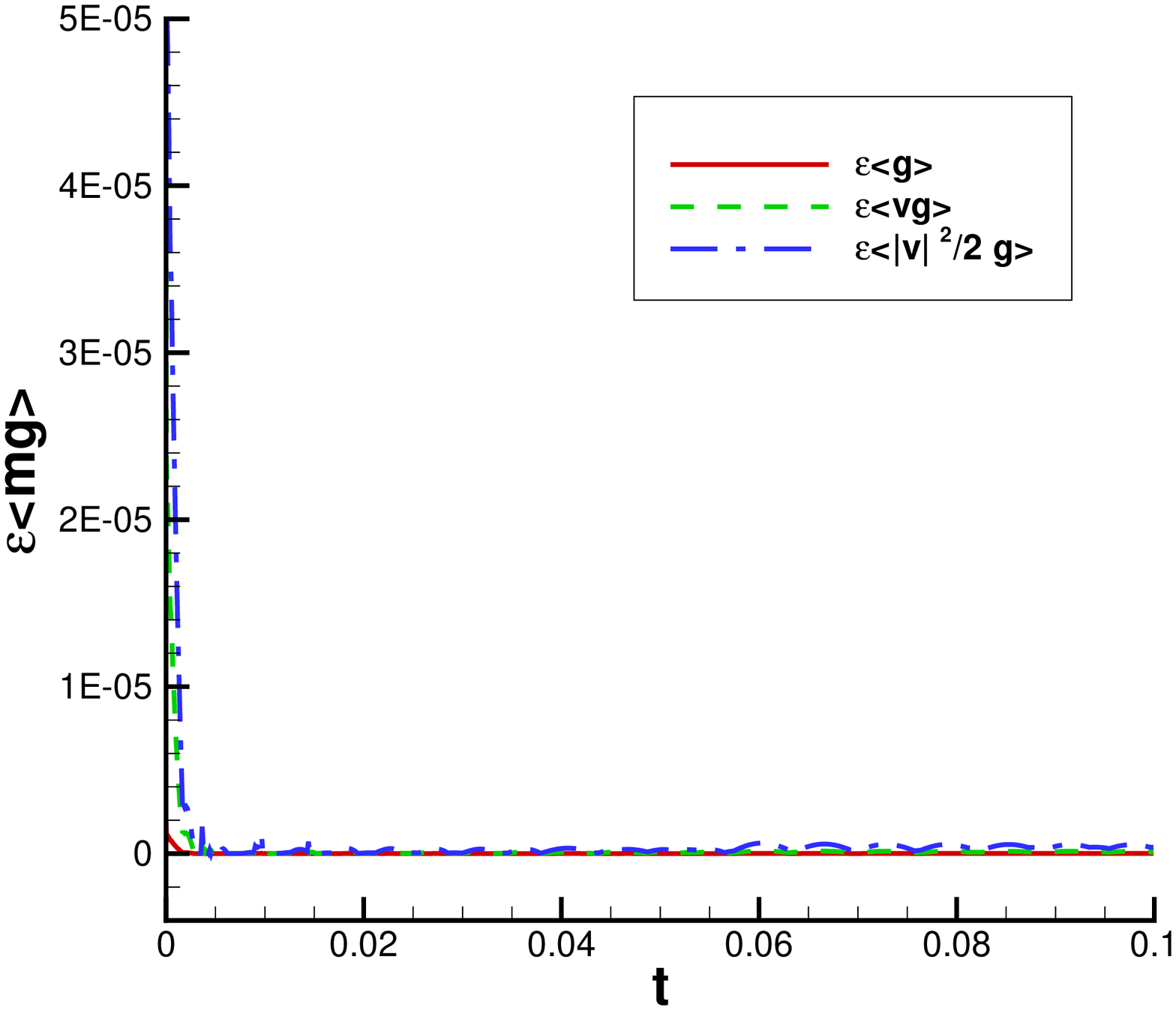},
\includegraphics[totalheight=2.5in]{./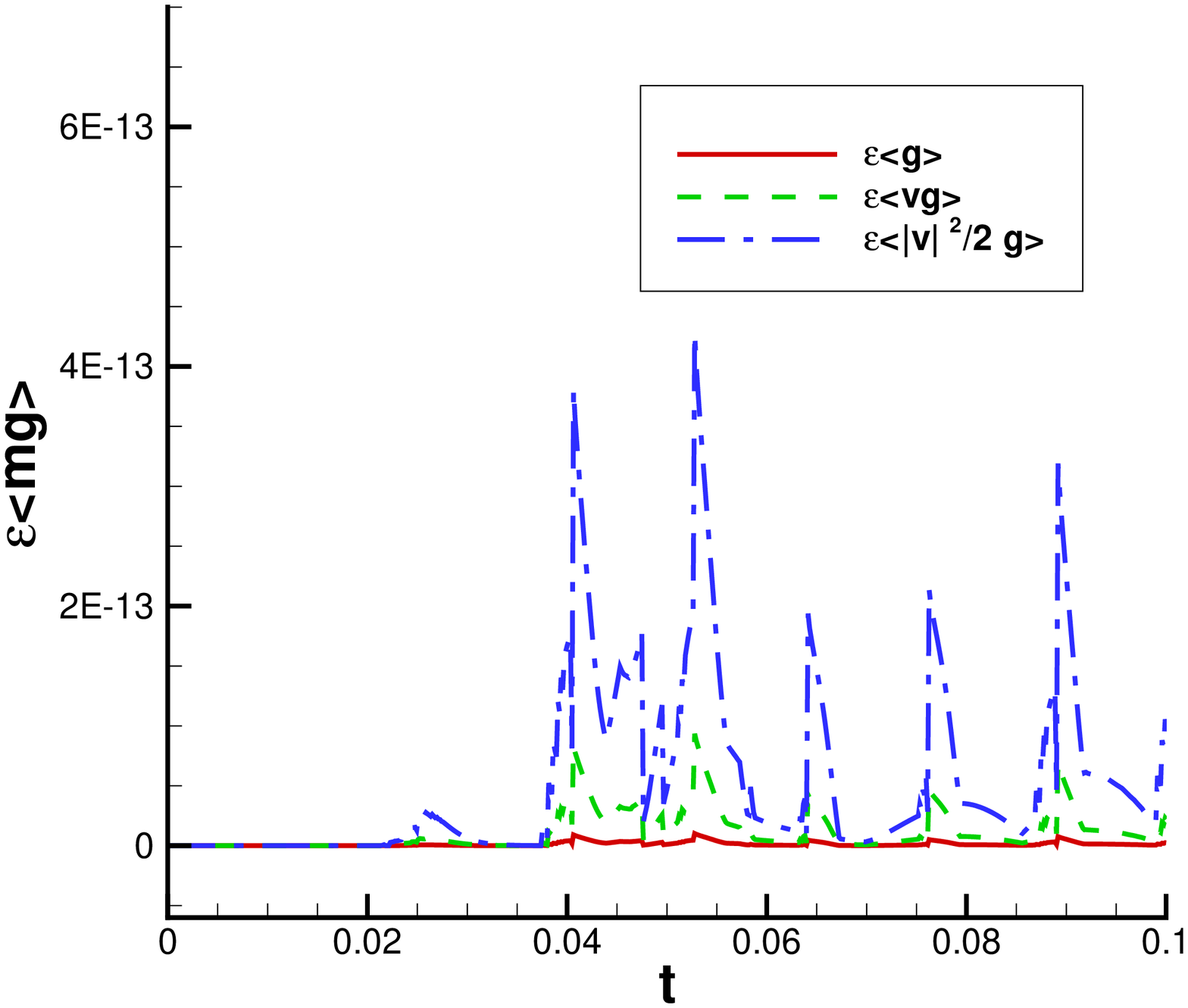}
\caption{$\eps \langle mg \rangle$ for the Lax problem (\ref{lax}) with NDG3. Measured as the maximal
value over $x$. $N_x=100$ and $N_v=100$ on the spatial domain $[-0.5, 1.5]$. Top: $\eps=1$;
Bottom: $\eps=10^{-6}$. Left: $V_c=8$; Right: $V_c=16$. With TVB limiter and $M_{tvb}=20$. }
\label{fig6}
\end{figure}
\end{exa}

\begin{exa}
The Shu-Osher problem \cite{shu1989efficient} is a pure right moving shock with a sine wave structure at the shock front. In this example, we keep the right states of the shock and the ratio of the pressures to be the same as in \cite{shu1989efficient}. The left states are obtained by the Rankine-Hugoniot shock jump condition with $\gamma=3$. Specifically, they are
\begin{eqnarray}
(\rho, u, p)=
\begin{cases}
(1.756757, 2.005122, 10.333333), & \quad  x \le -2 , \\
(1+0.1\sin(x), 0, 1), & \quad x>-2 .
\end{cases}
\label{so}
\end{eqnarray}
Initially the shock is located at $x=-2$ and $g(x,v,0)=0$. The solutions are computed with NDG3 up to time $t=1$.
We take a large enough spatial domain $[-12, 12]$ and it is discretized with $N_x=200$ grid points. The boundary values outside the computational domain in the $x$ direction are extrapolations of the initial values. The velocity domain $[-10, 10]$ is discretized with $N_v=100$ uniform points. We show the distribution function $f$ at $x=0$ in Fig. \ref{fig7}. The density, the mean velocity, the temperature and the rescaled heat flux are shown in Fig. \ref{fig8}. The conserved properties are further illustrated by plotting $\eps\langle mg \rangle$ in Fig. \ref{fig9}, with  $m=(1, v, |v|^2/2)^t$ and for $\eps=1$ and $\eps=10^{-6}$.

For this example, we also compare the performance of the proposed methods with different accuracy orders on a relatively coarser mesh $N_x=50$ and $N_v=100$ in Fig. \ref{fig16}. The reference solution is NDG3 on a mesh of $N_x=800$ and $N_v=200$.
Here we take the TVB parameter $M_{tvb}=1$ and we can see that the higher order schemes NDG2 and NDG3 perform much better than NDG1.

\begin{figure}[ht]
\centering
\includegraphics[totalheight=2.5in]{./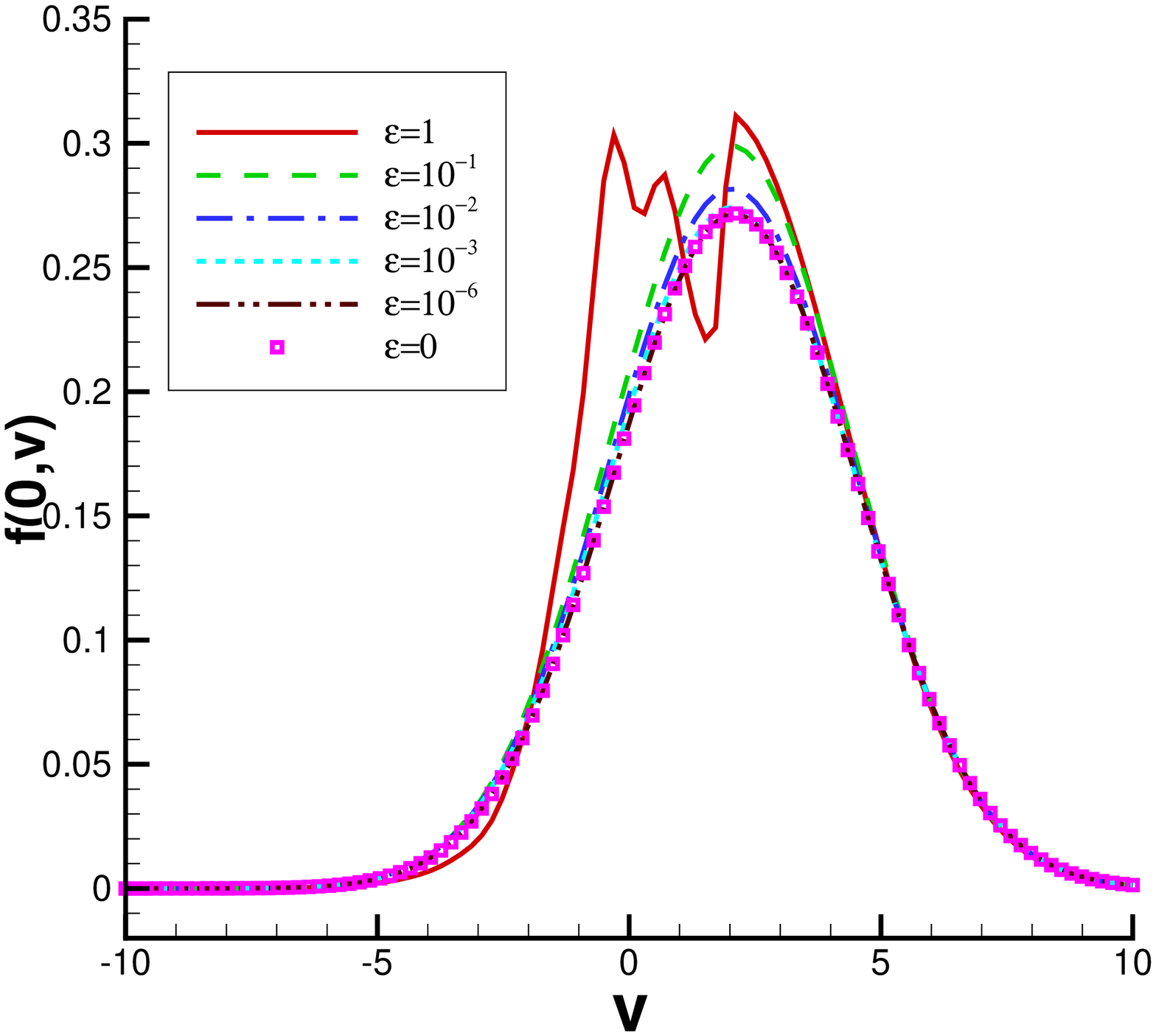}
\caption{Distribution function $f$ for the Shu-Osher problem (\ref{so}) at $x=0$ as a function of
$v\in[-10, 10]$. $N_x=200$ and $N_v=100$ for NDG3  at $t=1$. As $\eps$ goes to $0$, $f$ becomes close to a Maxwellian. With TVB limiter and $M_{tvb}=20$.}
\label{fig7}
\end{figure}

\begin{figure}[ht]
\centering
\includegraphics[totalheight=2.5in]{./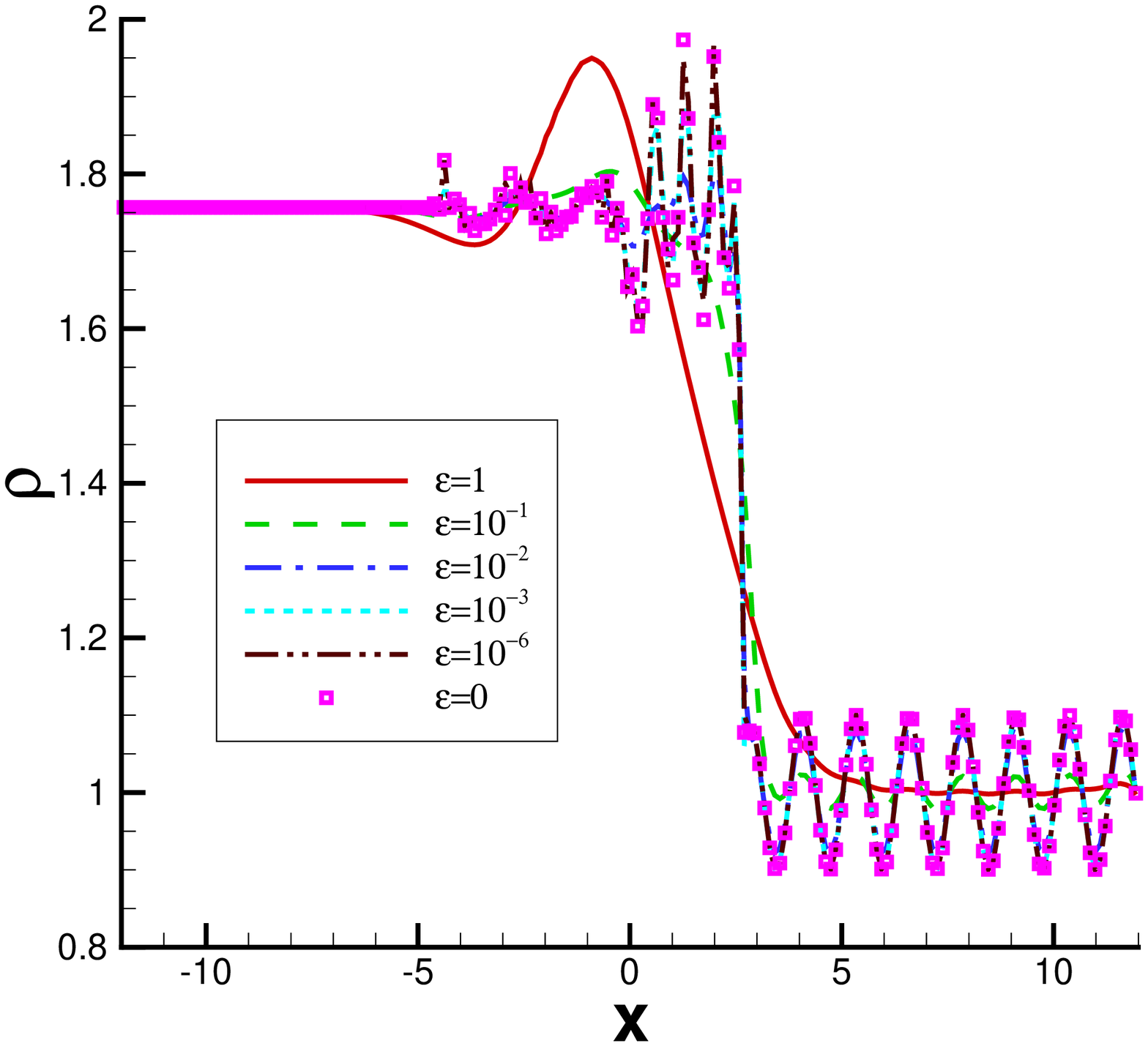},
\includegraphics[totalheight=2.5in]{./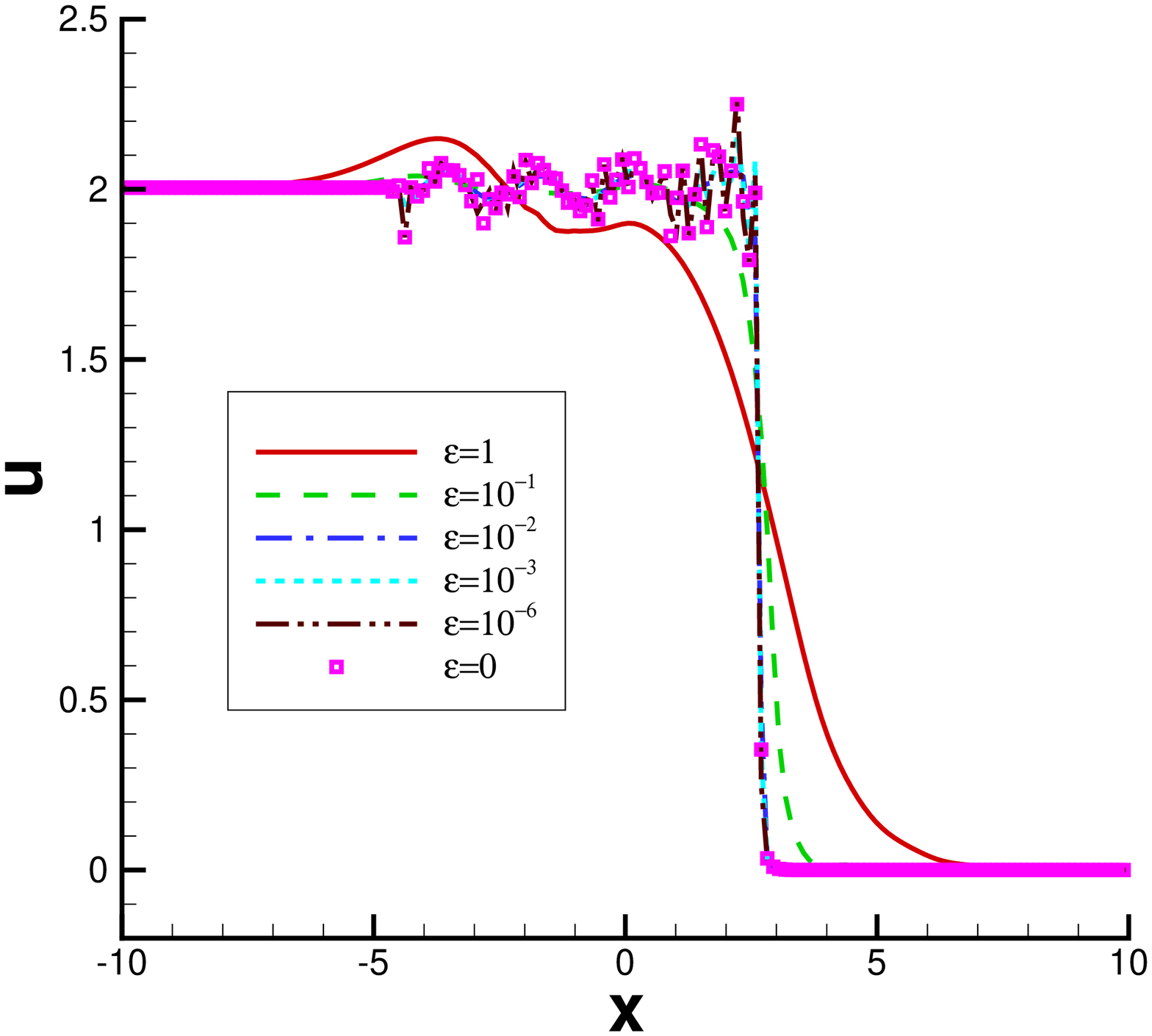}\\
\includegraphics[totalheight=2.5in]{./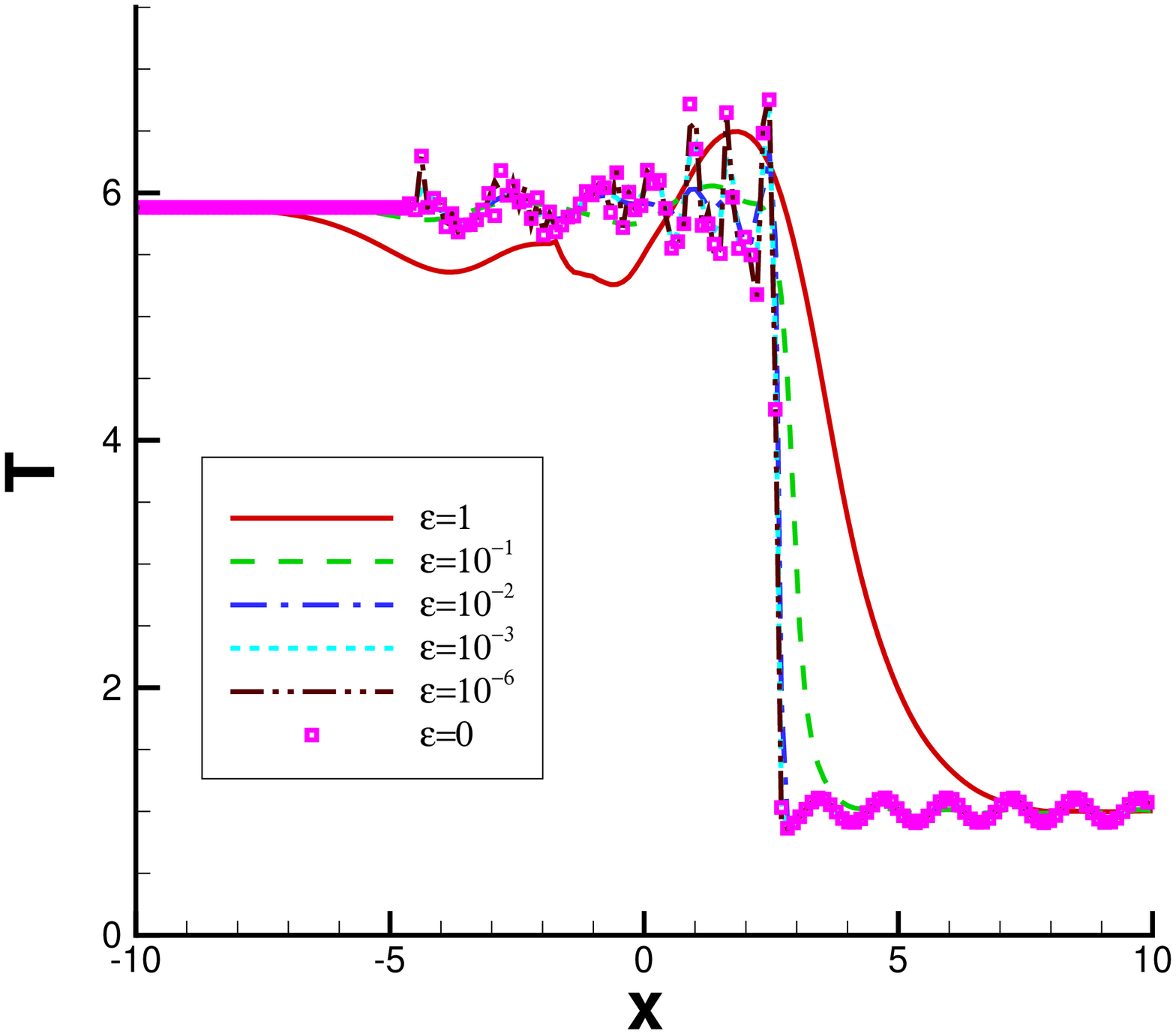},
\includegraphics[totalheight=2.5in]{./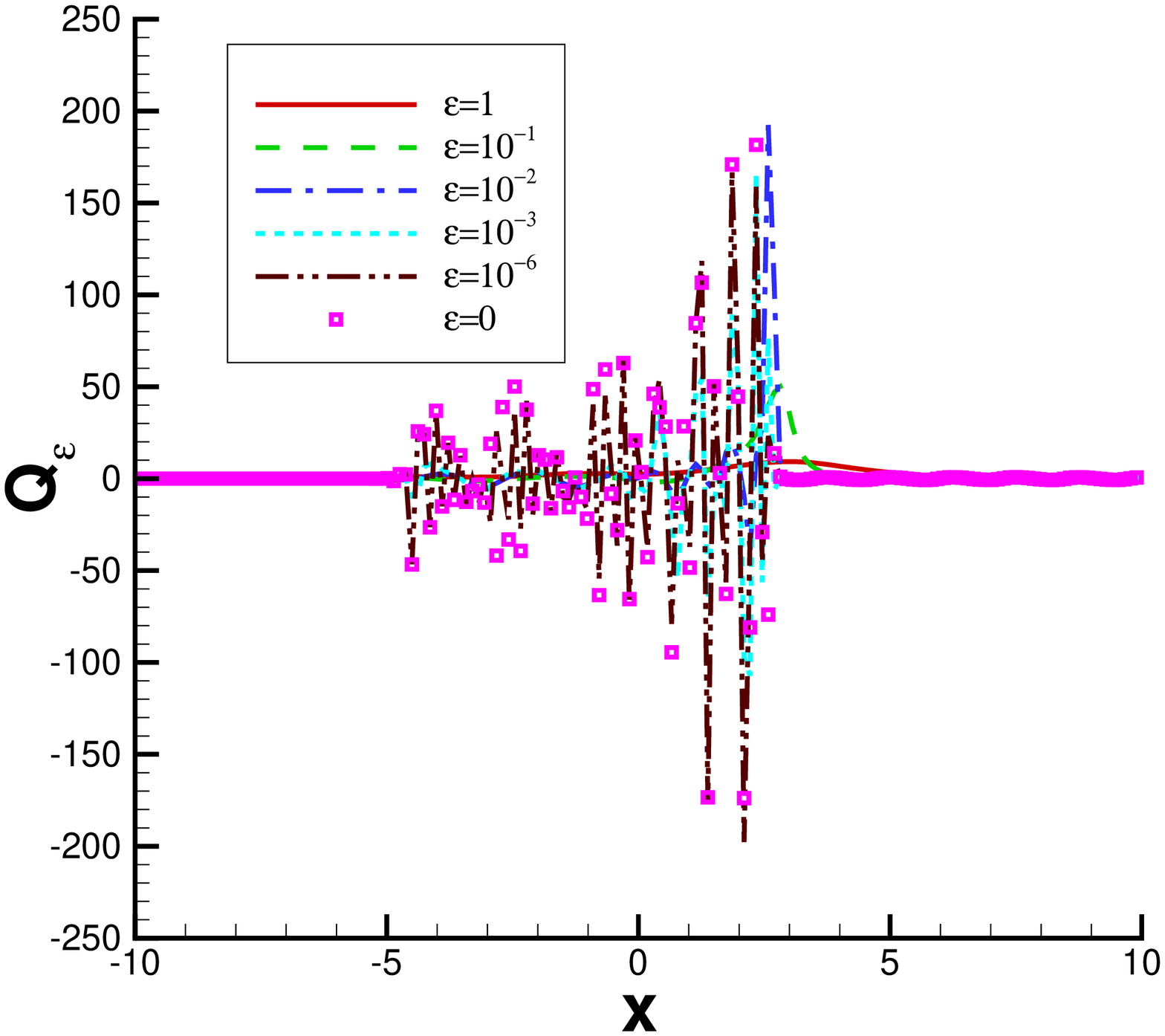}\\
\caption{Numerical solutions for the Shu-Osher problem (\ref{so}) at $t=1$ with NDG3.
$N_x=200$ and $N_v=100$ on the domain $[-12, 12]\times [-10, 10]$. Top: density, mean velocity;
Bottom: temperature, rescaled heat flux. With TVB limiter and $M_{tvb}=20$.}
\label{fig8}
\end{figure}

\begin{figure}[ht]
\centering
\includegraphics[totalheight=2.5in]{./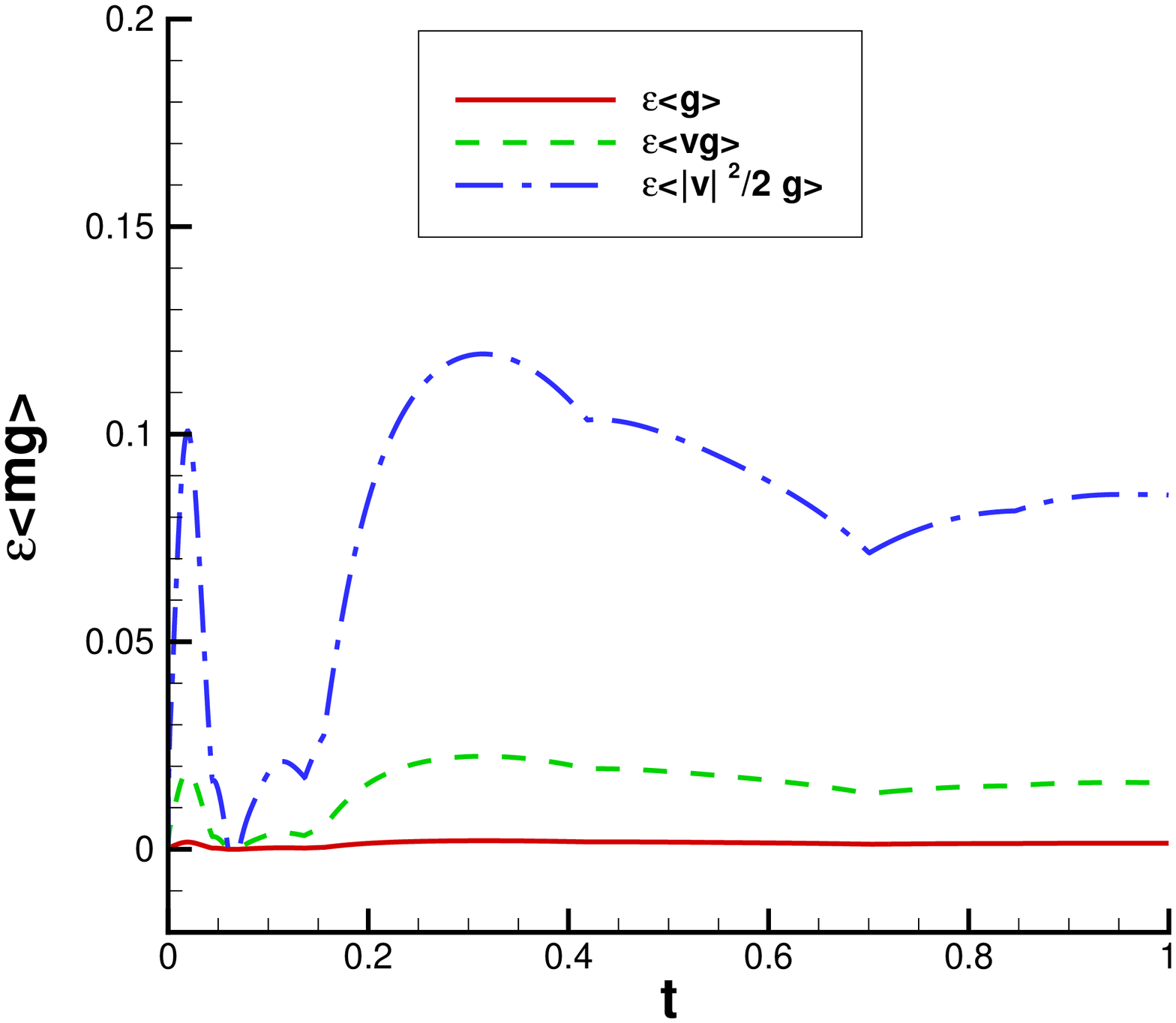},
\includegraphics[totalheight=2.5in]{./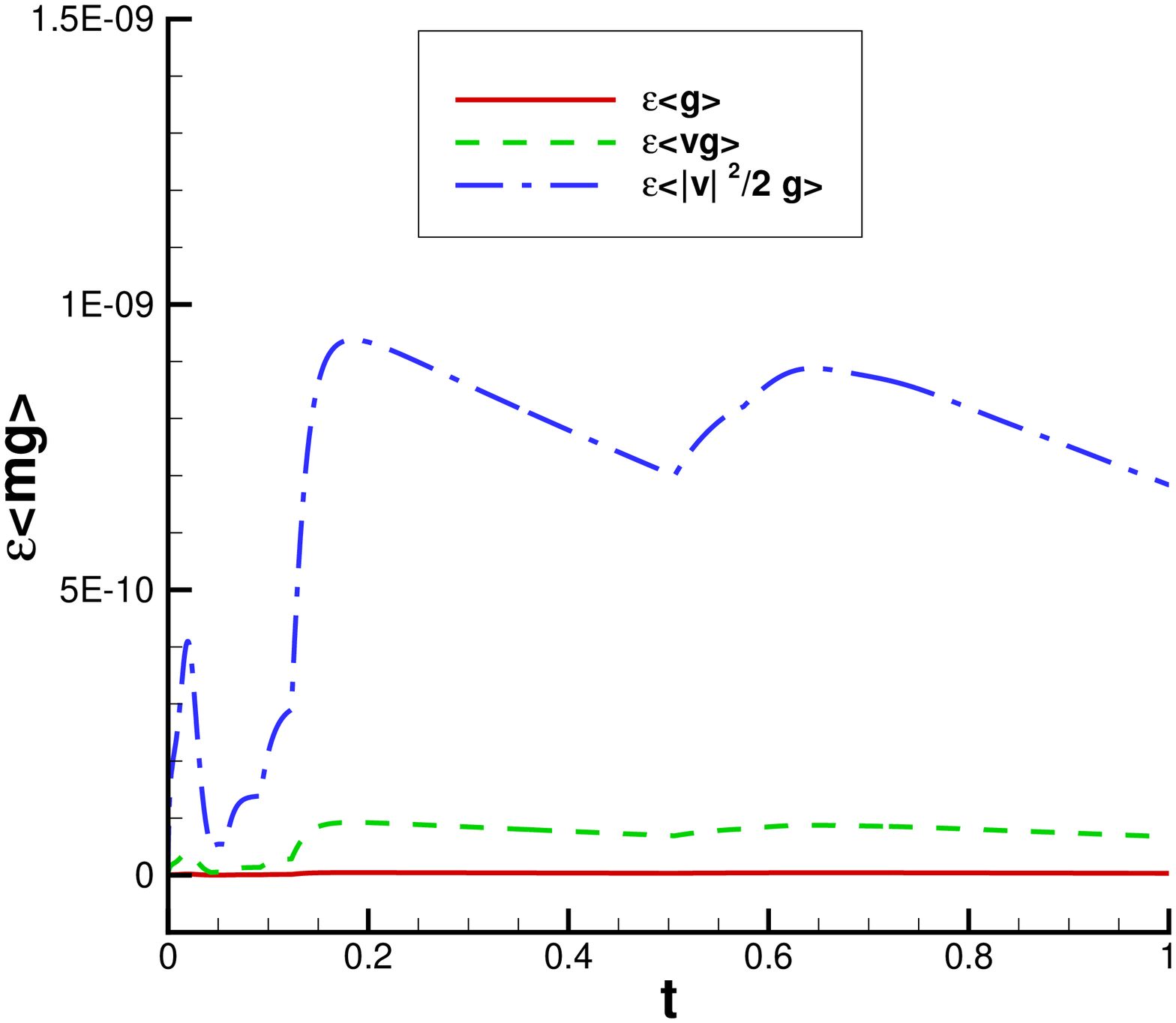} \\
\includegraphics[totalheight=2.5in]{./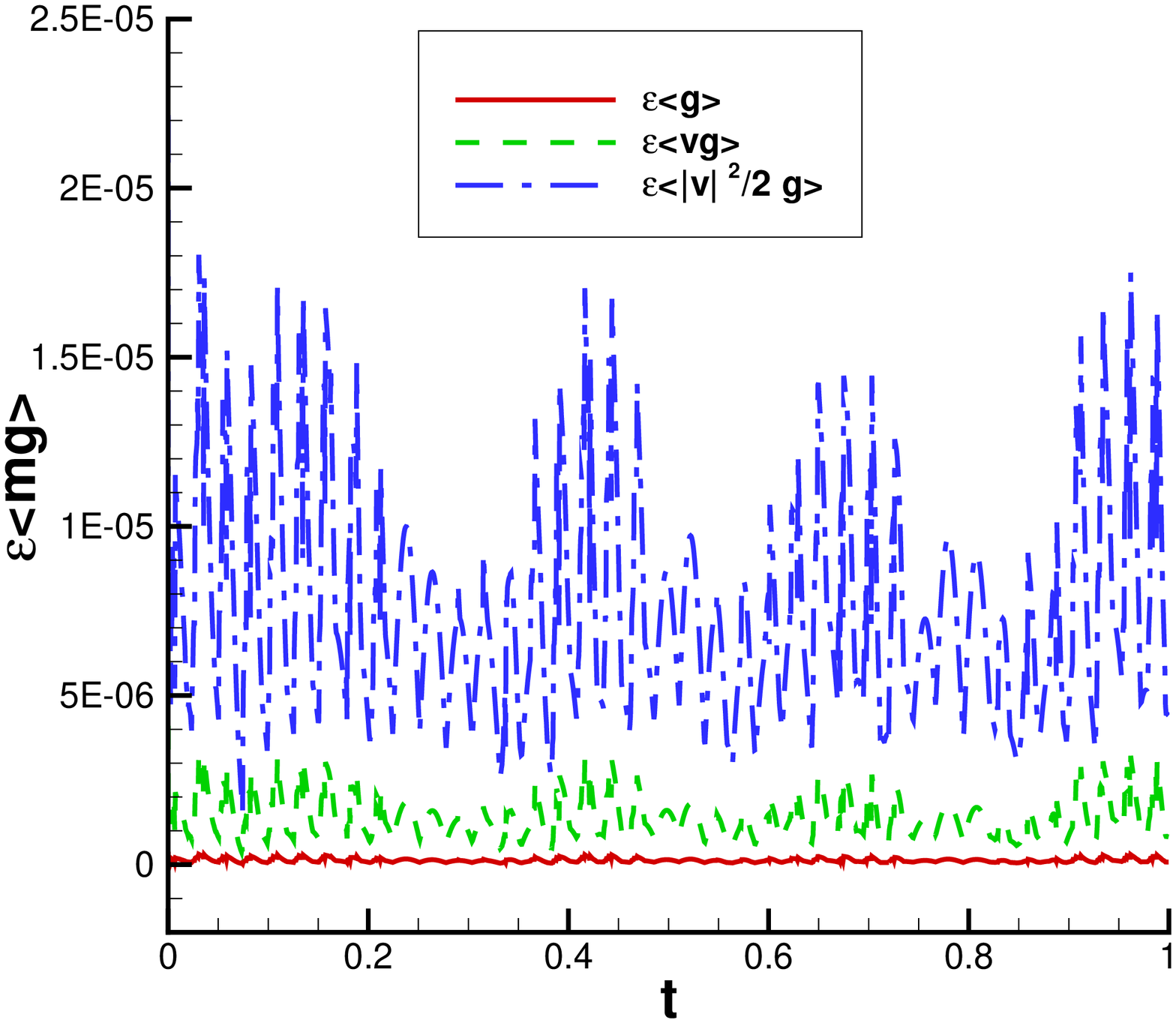},
\includegraphics[totalheight=2.5in]{./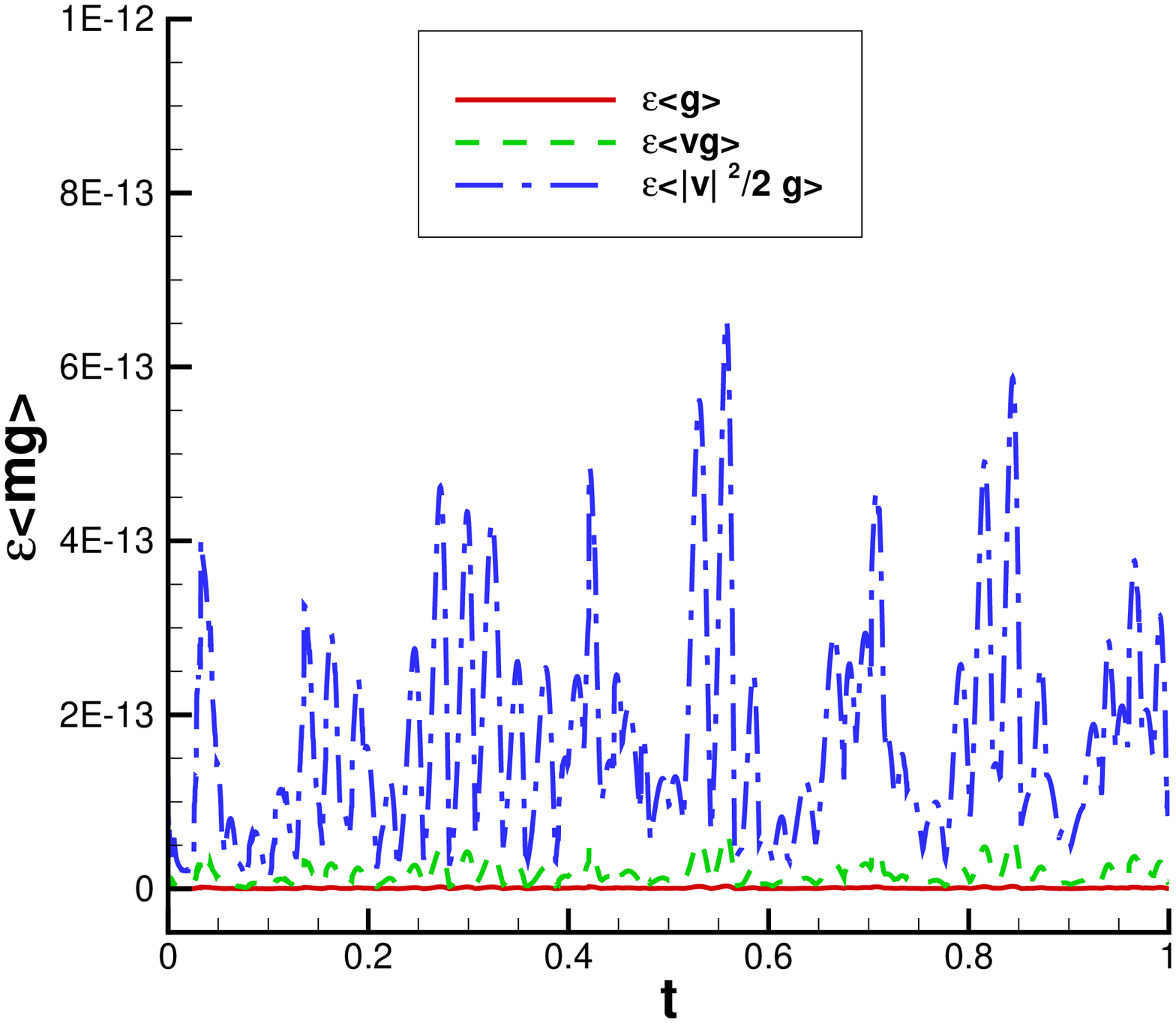}
\caption{$\eps\langle mg \rangle$ for the Shu-Osher problem (\ref{so}) with NDG3. Measured as the
maximal value over $x$. $N_x=200$ and $N_v=100$ on the spatial domain $[-12, 12]$. Top: $\eps=1$;
Bottom: $\eps=10^{-6}$. Left: $V_c=10$; Right: $V_c=20$. With TVB limiter and $M_{tvb}=20$. }
\label{fig9}
\end{figure}

\begin{figure}[ht]
\centering
\subfigure[$\eps=1$]{
\includegraphics[totalheight=2.5in]{./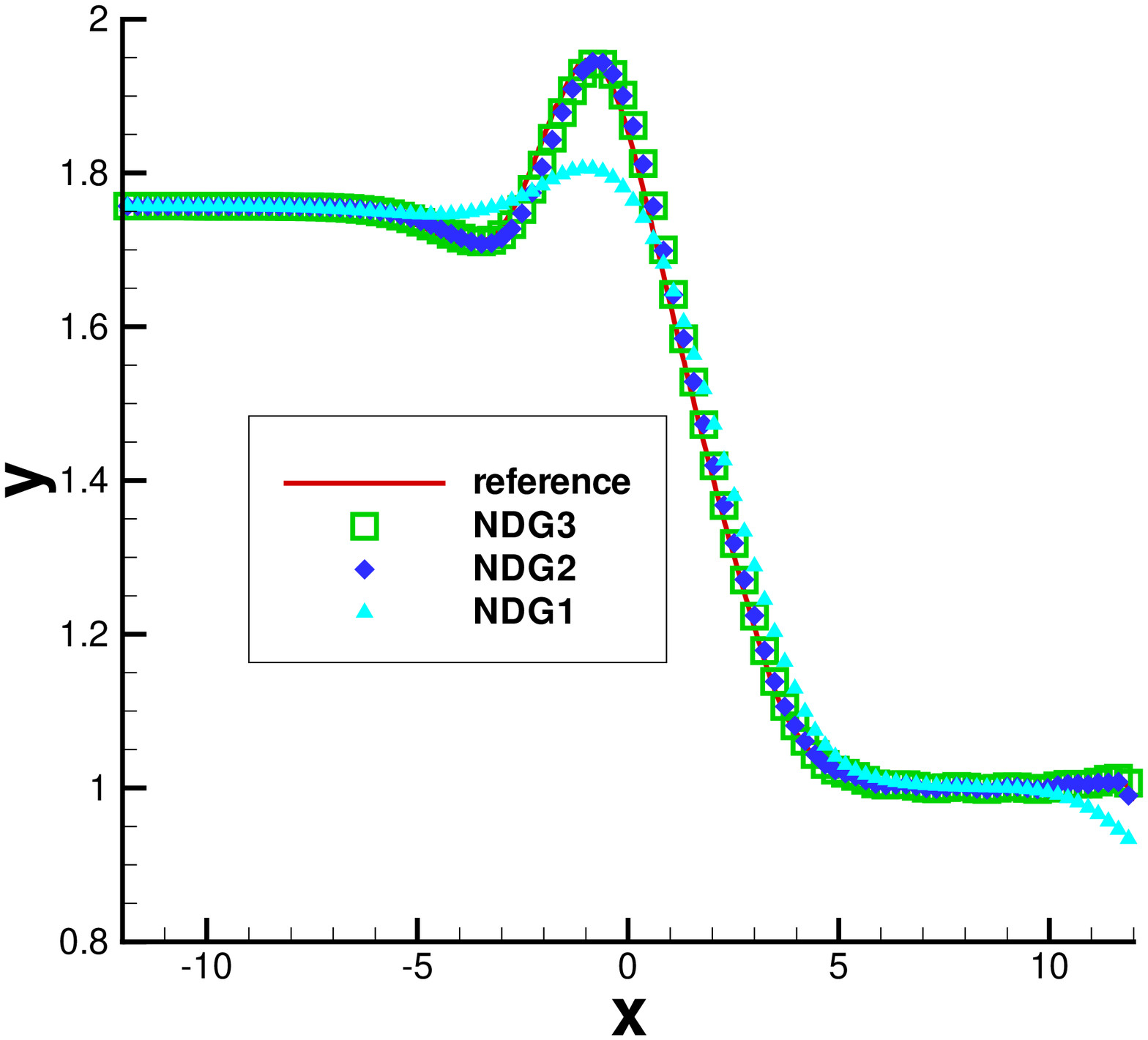}},
\subfigure[$\eps=10^{-1}$]{
\includegraphics[totalheight=2.5in]{./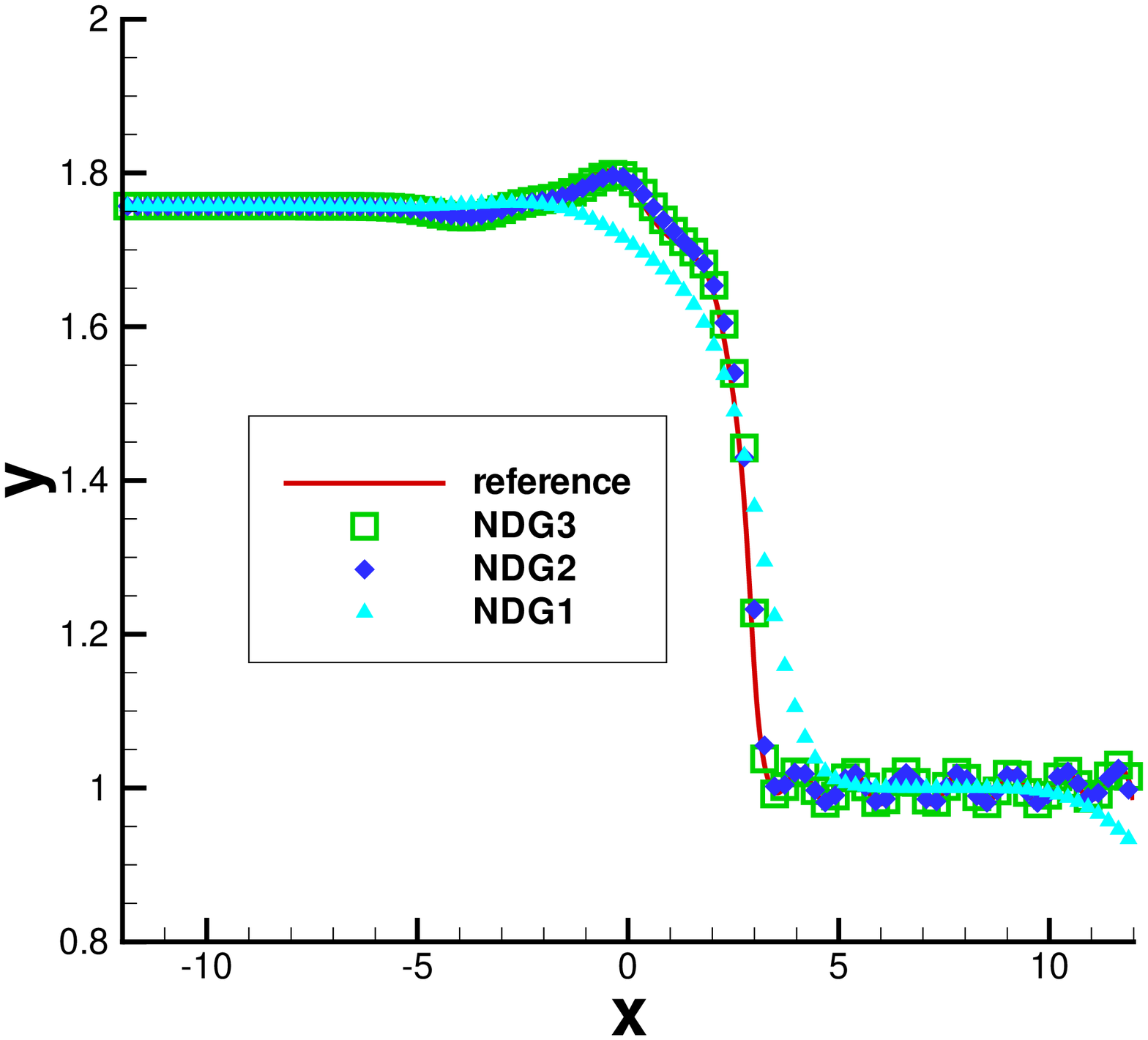}}\\
\subfigure[$\eps=10^{-2}$]{
\includegraphics[totalheight=2.5in]{./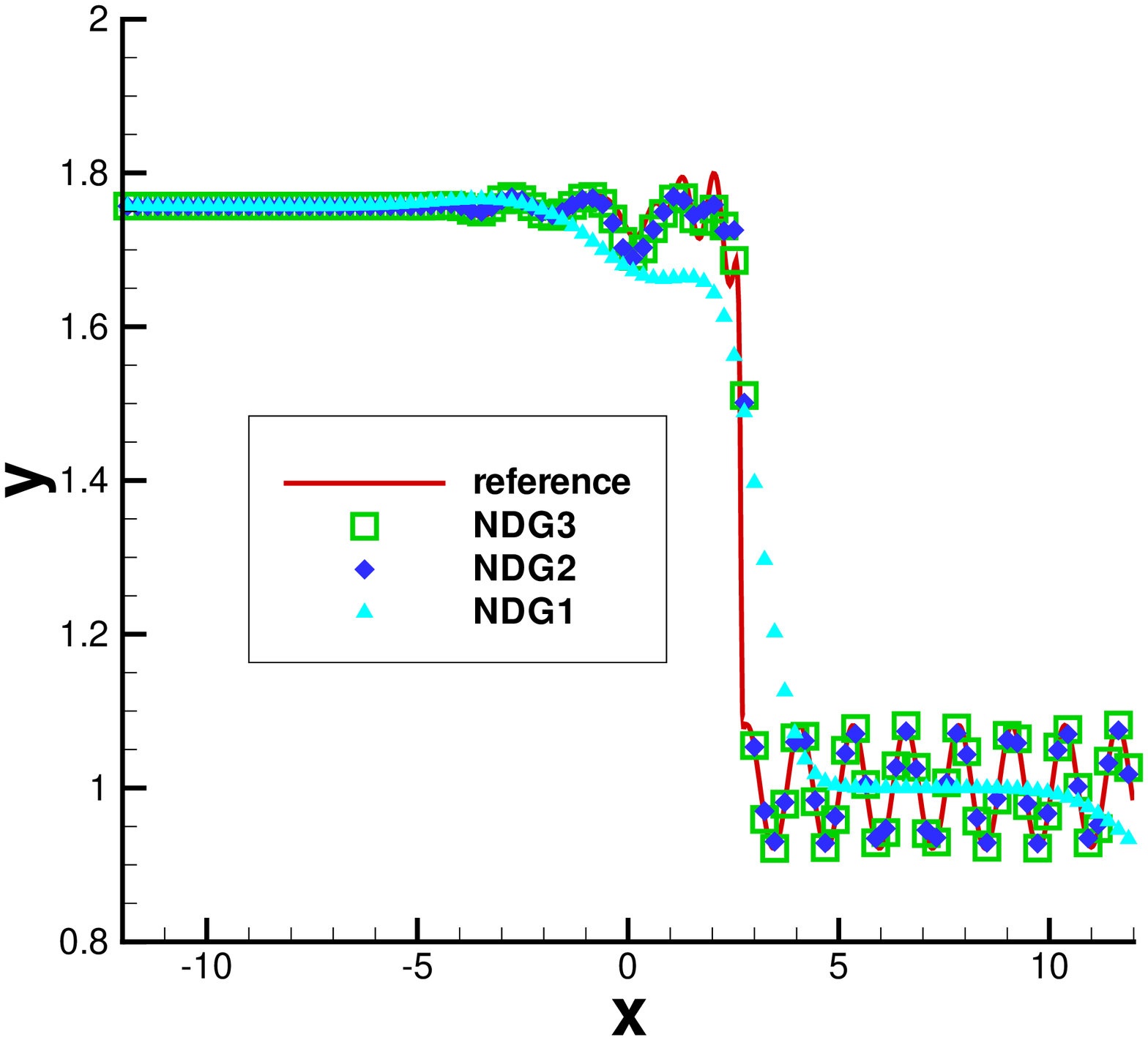}},
\subfigure[$\eps=10^{-6}$]{
\includegraphics[totalheight=2.5in]{./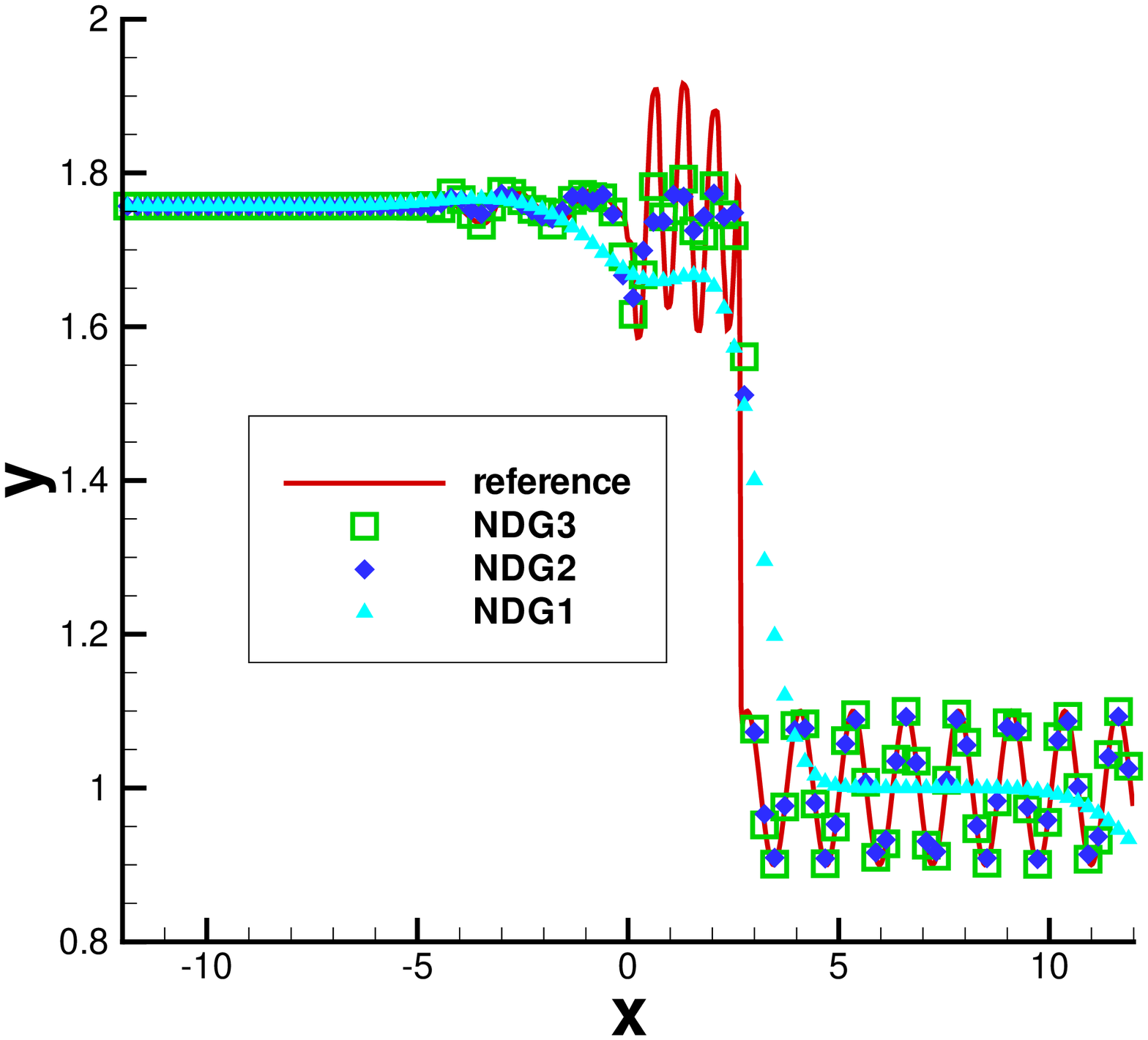}}\\
\caption{Numerical solution of density for the Shu-Osher problem (\ref{so}) at $t=1$
on the domain $[-12, 12]\times [-10, 10]$.
Solid line: reference solution of NDG3 with $N_x=800$ and $N_v=200$; Symbols with $N_x=100$
and $N_v=100$: square NDG3, diamond NDG2, delta NDG1. With TVB limiter and $M_{tvb}=1$. }
\label{fig16}
\end{figure}

\end{exa}

\begin{exa}
Finally we consider an example with a variable $\eps(x)$,
\begin{equation}
\eps(x)=\eps_0+\frac12\Big(\tanh(1- a_0 x)+\tanh(1+a_0 x)\Big).
\label{eps}
\end{equation}
The initial distribution function $f$ is far away from the Maxwellian, which is
\begin{eqnarray}
f(x,v,0)=\frac{\tilde \rho}{2( 2 \pi \tilde T)^{1/2}}\left[\exp\left(-\frac{|v-\tilde u|^2}{2 \tilde T}\right)
+\exp\left(-\frac{|v+\tilde u|^2}{2\tilde T}\right)\right],
\end{eqnarray}
with
\begin{equation}
\tilde \rho(x)=1+0.875 \sin(w x), \quad \tilde T(x)=0.5+0.4 \sin(\omega x), \quad \tilde u(x)=0.75,
\end{equation}
on the spatial domain $x\in [-L, L]$, where $\omega=\pi/L$ and $L=0.5$. From the definition \eqref{U-vector}, the initial macroscopic variables are
\begin{equation}
\rho(x,0)=\tilde \rho(x), \quad u(x,0)=0, \quad T(x,0)=\tilde T(x)+ \tilde u(x)^2,
\end{equation}
and the initial Maxwellian distribution is
\begin{equation}
M_U(x,v,0)=\frac{\rho(x,0)}{(2\pi T(x,0))^{1/2}}\exp\left(-\frac{|v-u(x,0)|^2}{2T(x,0)}\right).
\end{equation}
Periodic boundary conditions are used for both $U$ and $g$ in the $x$ direction. This example has a similar spirit as the one in \cite{filbet2010class}. The velocity domain is taken to be $\Omega_v=[-10, 10]$.

In Fig. \ref{fig10}, we first show the distribution function $f$, the density $\rho$, the mean velocity $u$ and the temperature $T$ at time $t=0.1, 0.3, 0.45$, with a wide peak of $\eps(x)$ in \eqref{eps} where $a_0=11$ and $\eps_0=10^{-6}$ (Fig. \ref{fig14} dashed line). $N_x=40$ and $N_v=100$ are used. Discontinuities can be observed in the solutions, while overall the solution structures are simple.
%Little fine structures can be seen in the solutions. 
For the three methods with different accuracy,
NDG2 is close to NDG3, with both matching the reference solutions of NDG3 with $N_x=200$ and $N_v=200$ very well, and they are observed to perform much better than NDG1.
%
%We also present the same quantities produced by Scheme I in Fig. \ref{fig102}. We would like to point out %that Scheme II and Scheme I have almost the same results, however, NDG1 of Scheme I is slightly  better than %the one of Scheme II.
%
For this case, we also show the $L^1$ errors and orders, which are computed as in (\ref{l1error}) and (\ref{l1order}), for NDG3 at a short time $t=0.001$ in Table \ref{tab10}. At least the 2nd order accuracy can be observed for both $\rho$ and $g$ in this mixed regime problem.
% if keeping on refining the meshes.

Fig. \ref{fig11} has a similar setting to those of Fig. \ref{fig10}, but with a narrower peak of $\eps(x)$ where $a_0=40$ (Fig. \ref{fig14} solid line). Discontinuities can be observed in the solutions.
%, which are similar to the results in \cite{filbet2010class}.
We can see that on the relatively coarser mesh with $N_x=40$ and $N_v=100$, the results from NDG2 and NDG3 are comparable to the solid lines of NDG3 on a finer mesh with  $N_x=200$ and $N_v=200$. Again, NDG2 and NDG3 perform much better than NDG1.

%The same plots with Scheme I are presented in Fig. \ref{fig112}. Here the two schemes have almost the same %results.

%The evolution of entropy $\int_v\int_x f(x,v,t)\log(f(x,v,t))dxdv$ for $a_0=40$ and $a_0=11$ with %$\eps_0=10^{-6}$ is shown in Fig. \ref{fig12} and we can see the entropy is preserved comparably well.

For the case of $a_0=40$, we also take a bigger $\eps_0=10^{-3}$ and compare the results of NDG3 with the one obtained by explicitly solving the BGK model \eqref{bgk} with NDG1 spatial discretization and the first order Euler forward time discretization on the mesh of $N_x=1000$ and $N_v=100$. The results in Fig. \ref{fig13} match each other very well.

\begin{table}[!h]
\centering
\caption{$L^1$ errors and orders for $\rho$ and $g$ of the mixed regime problem with $\eps(x)$ in \eqref{eps}. $a_0=11$ and $\eps_0=10^{-6}$. NDG3. $t=0.001$. $V_c=20$. }
\vspace{0.2cm}
  \begin{tabular}{|c|c|c|c|c|c|}
    \hline
    N  &  $L^1$ error of $\rho$ & order   & $L^1$ error of $g$  & order \\\hline
    20 &     5.12E-05 &       --&     1.44E-03 &       --  \\  \hline
    40 &     7.30E-06 &     2.81&     2.62E-04 &     2.46  \\  \hline
    80 &     1.08E-06 &     2.76&     3.65E-05 &     2.84  \\  \hline
   160 &     2.06E-07 &     2.39&     9.03E-06 &     2.01  \\  \hline
  \end{tabular}
\label{tab10}
\end{table}

\begin{figure}
\centering
\includegraphics[totalheight=2.5in]{./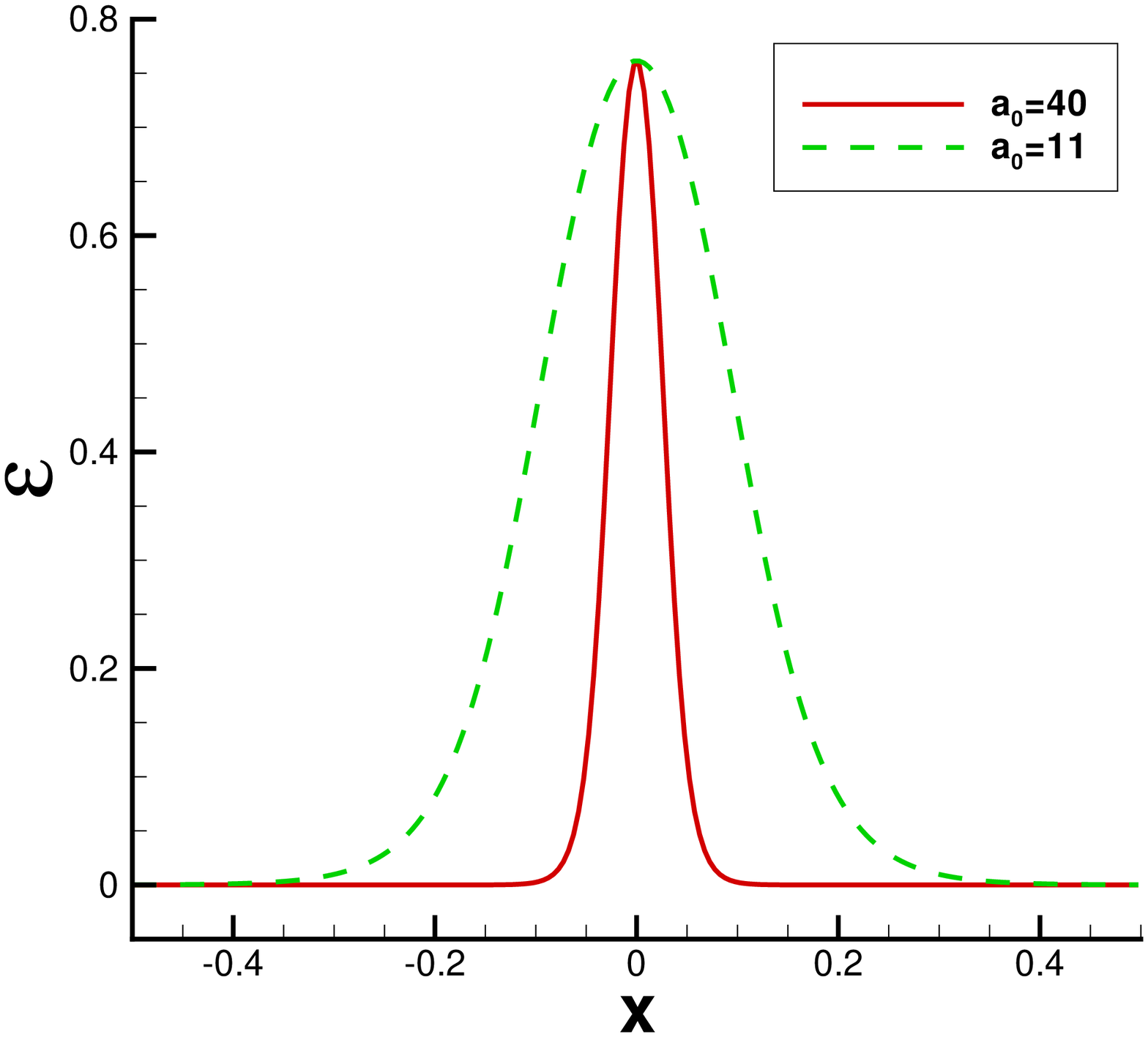}
\caption{Variable $\eps(x)$ in (\ref{eps}) with $\eps_0=10^{-6}$.}
\label{fig14}
\end{figure}

\begin{figure}[ht]
\centering
\includegraphics[totalheight=1.8in]{./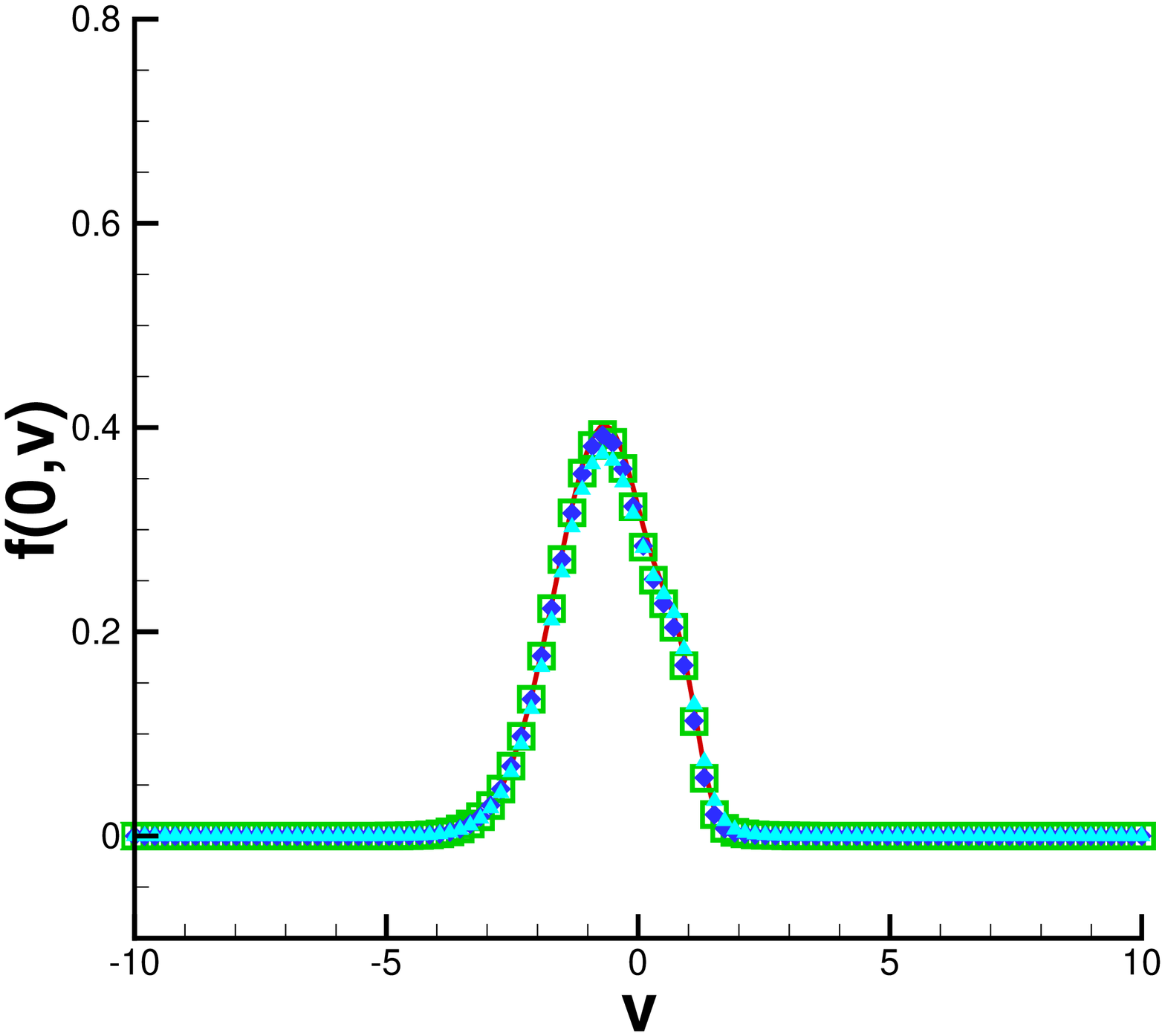},
\includegraphics[totalheight=1.8in]{./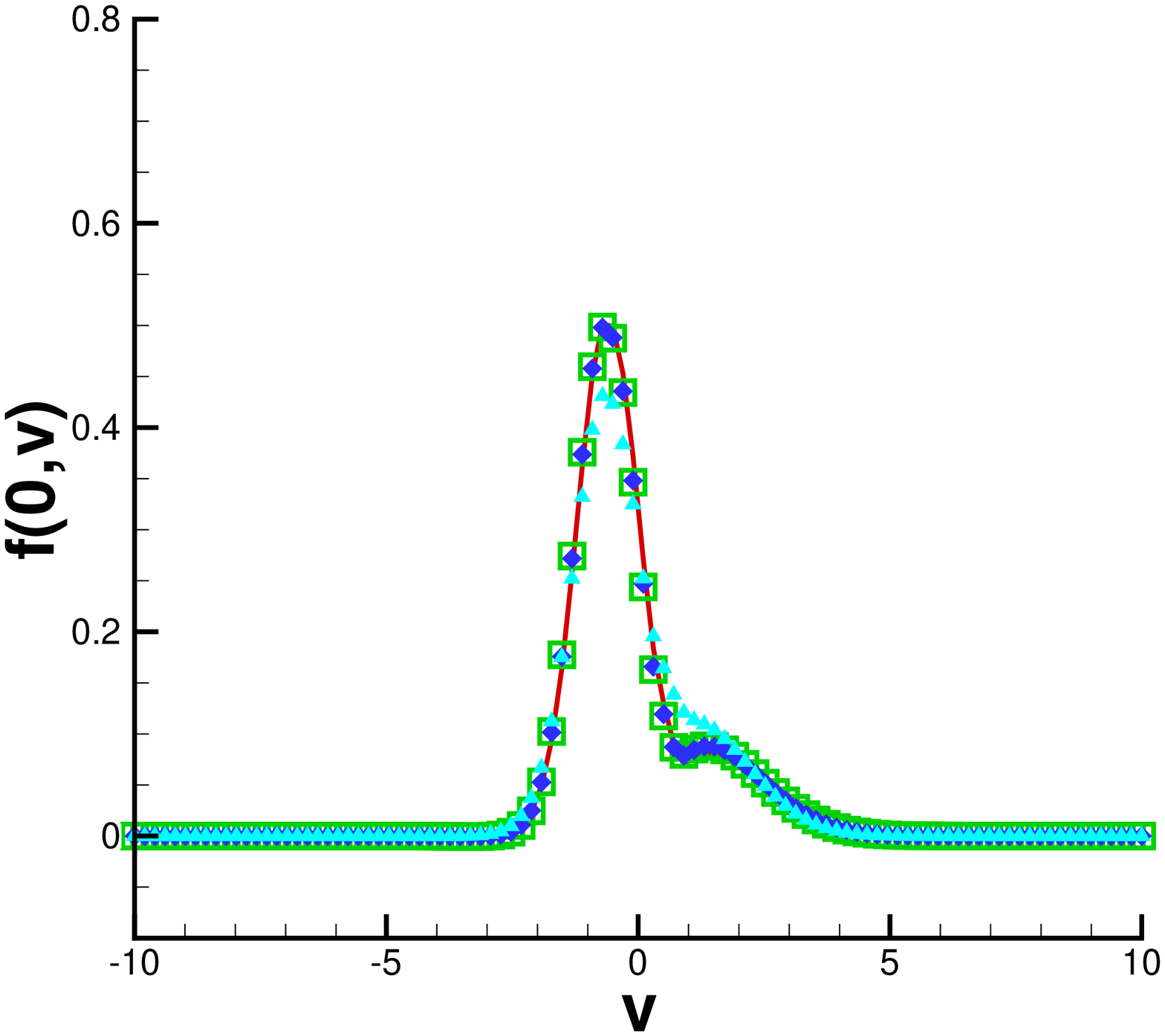},
\includegraphics[totalheight=1.8in]{./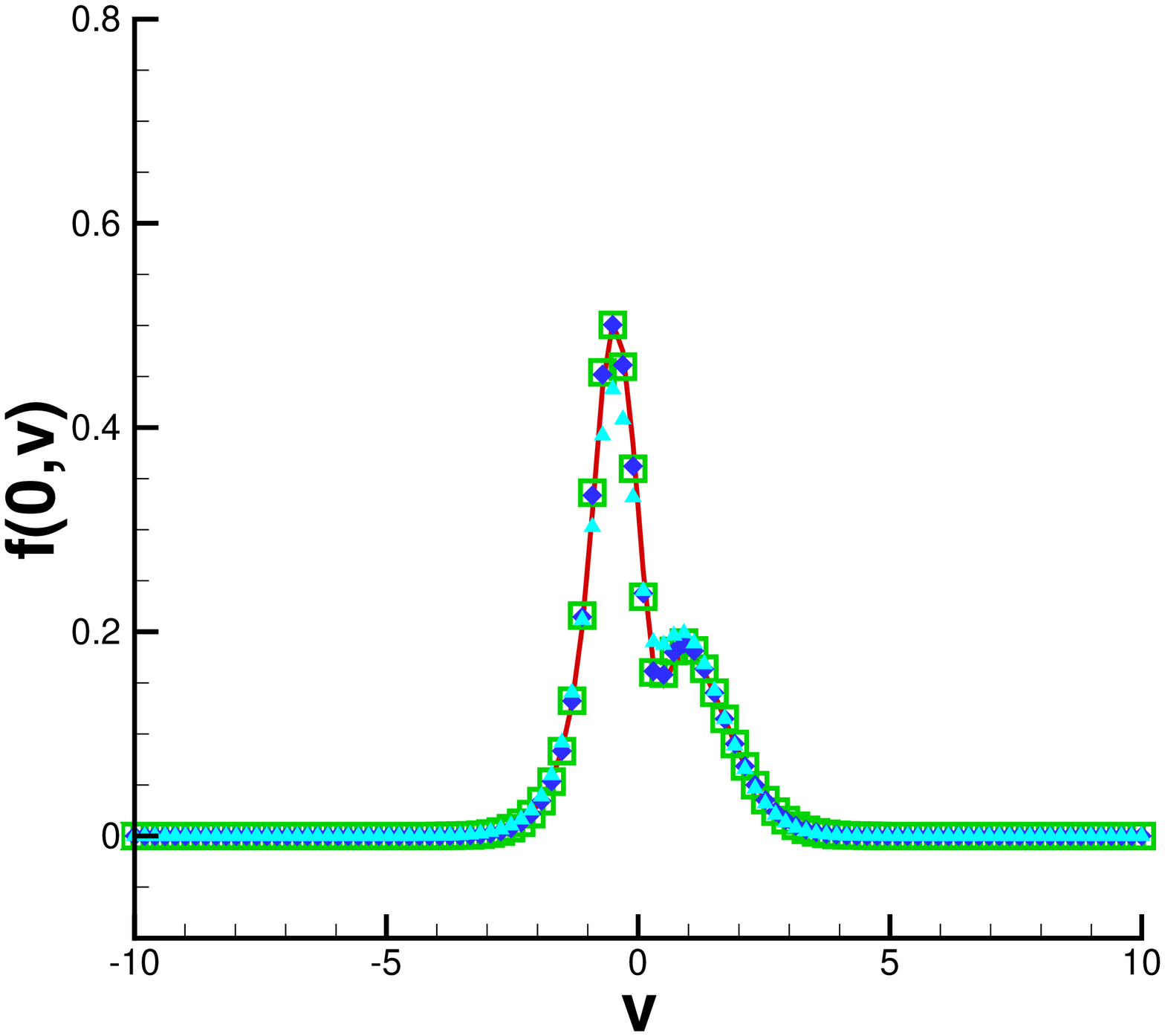} \\
\includegraphics[totalheight=1.8in]{./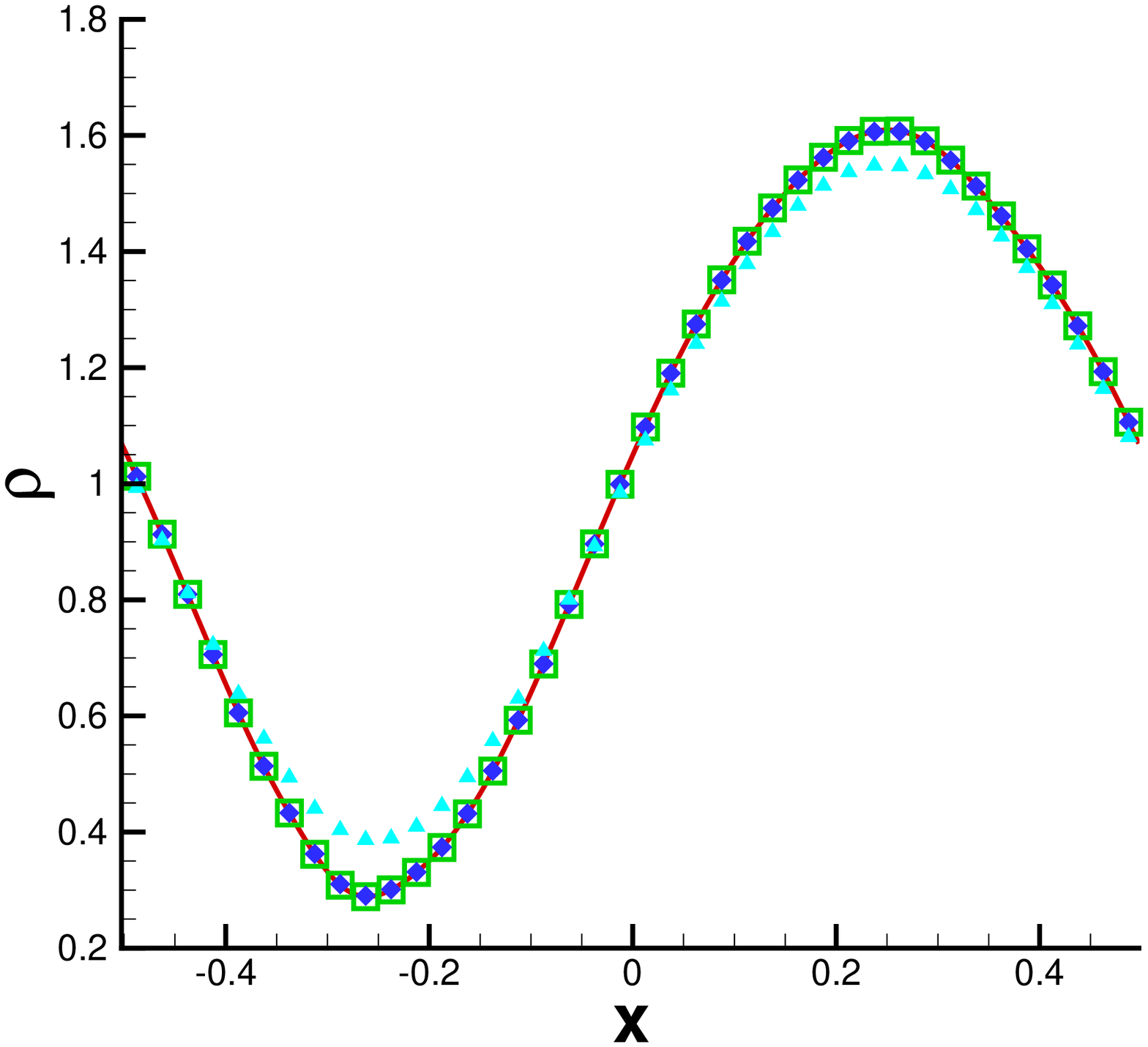},
\includegraphics[totalheight=1.8in]{./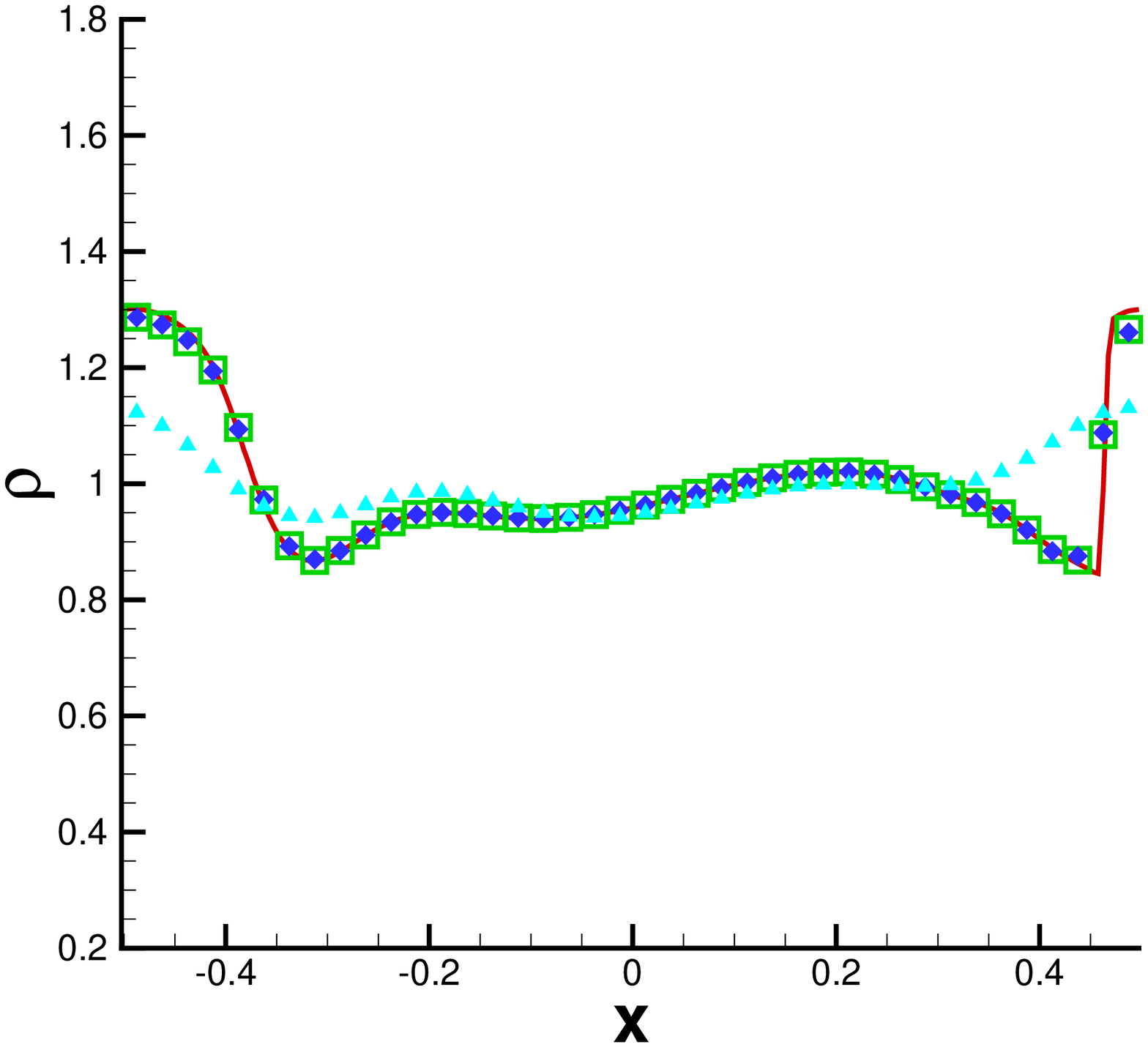},
\includegraphics[totalheight=1.8in]{./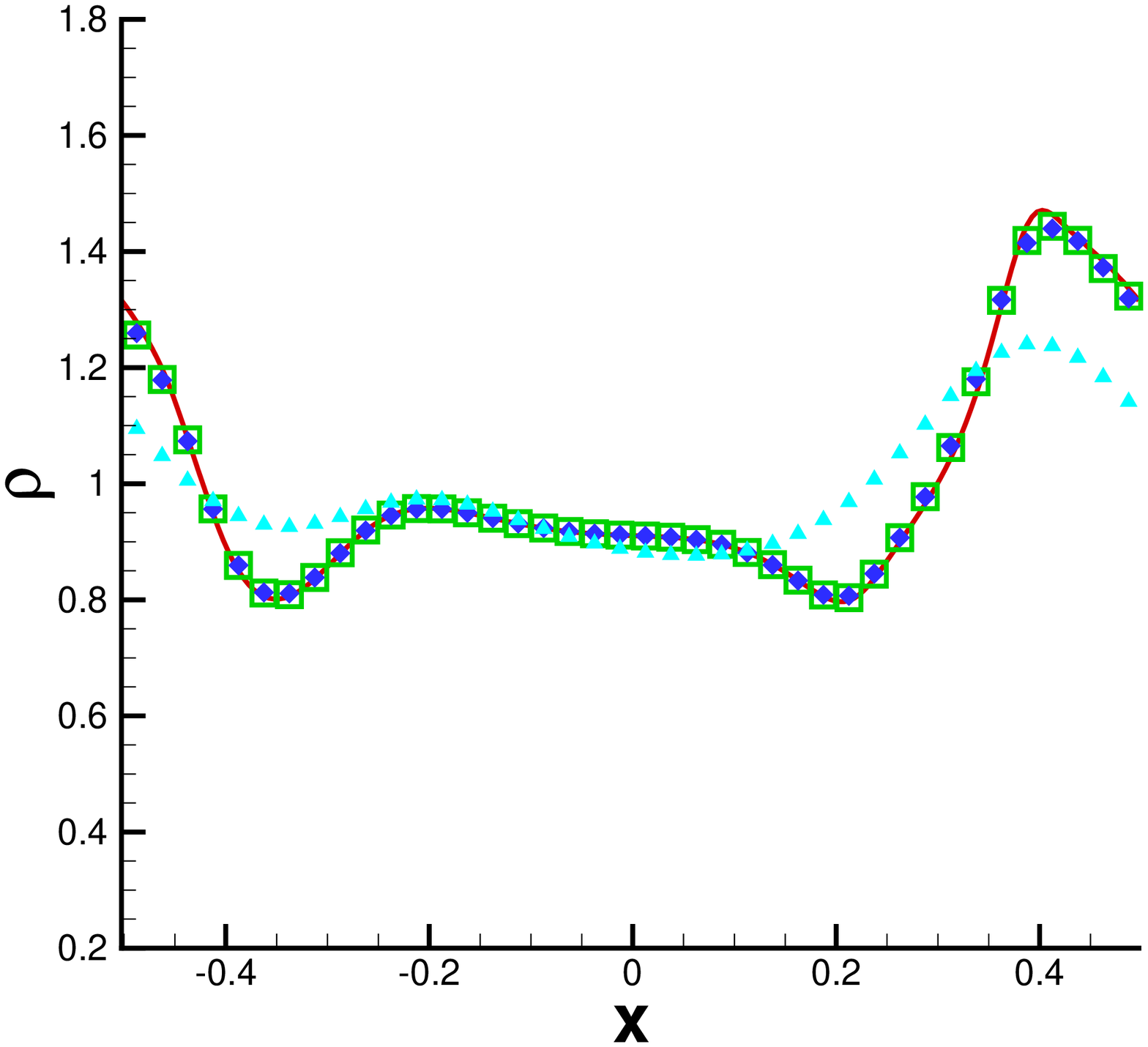} \\
\includegraphics[totalheight=1.8in]{./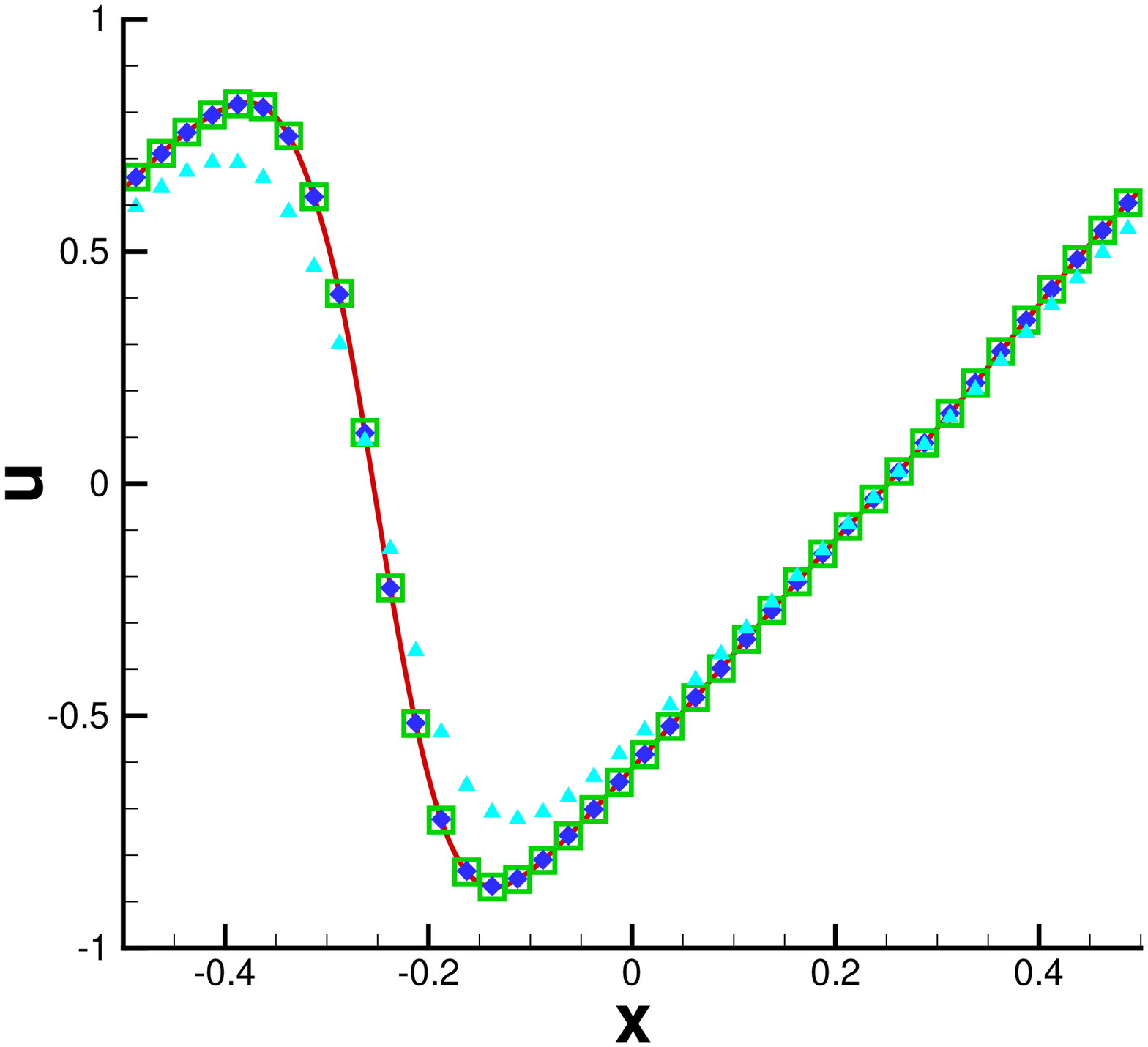},
\includegraphics[totalheight=1.8in]{./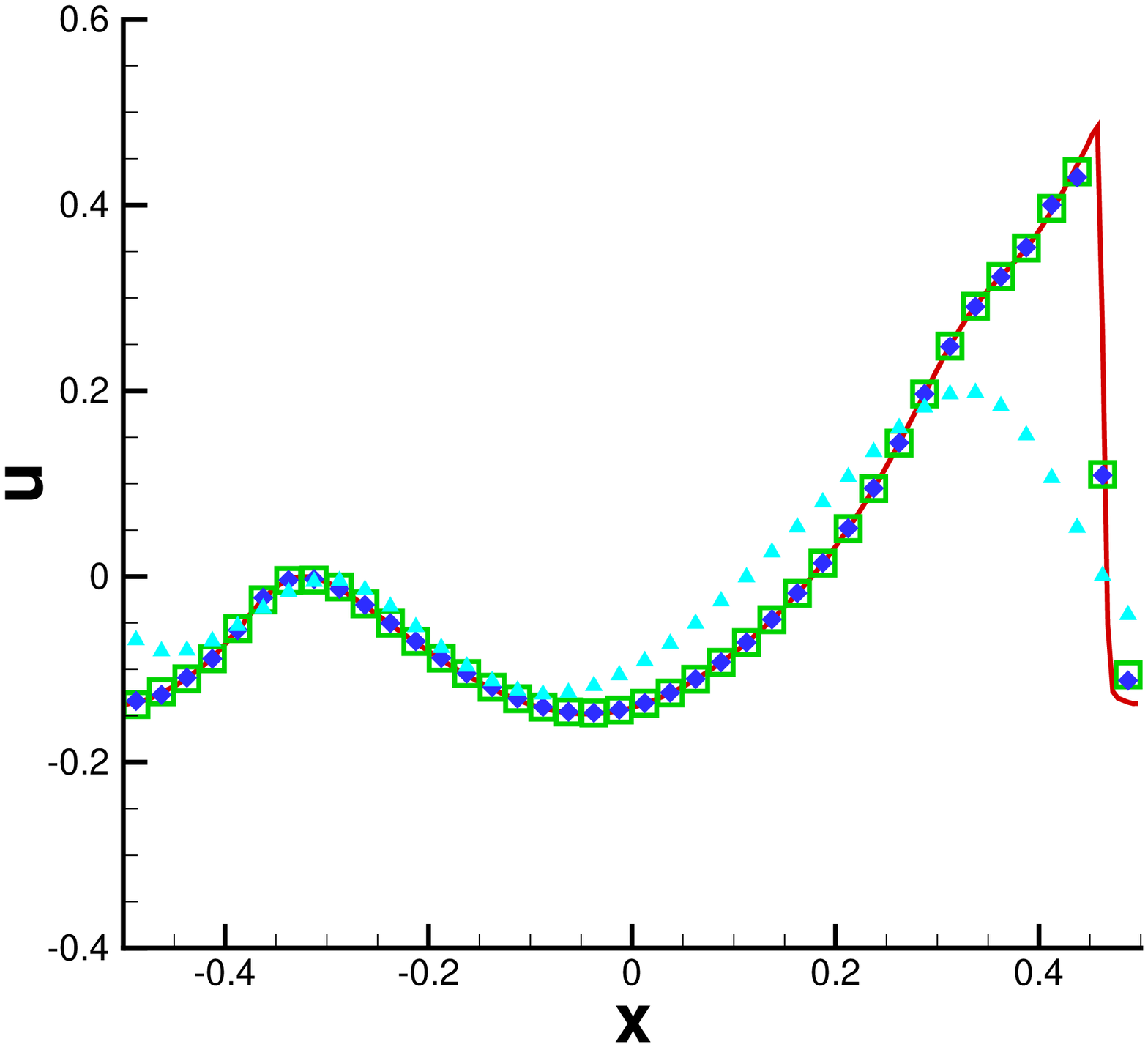},
\includegraphics[totalheight=1.8in]{./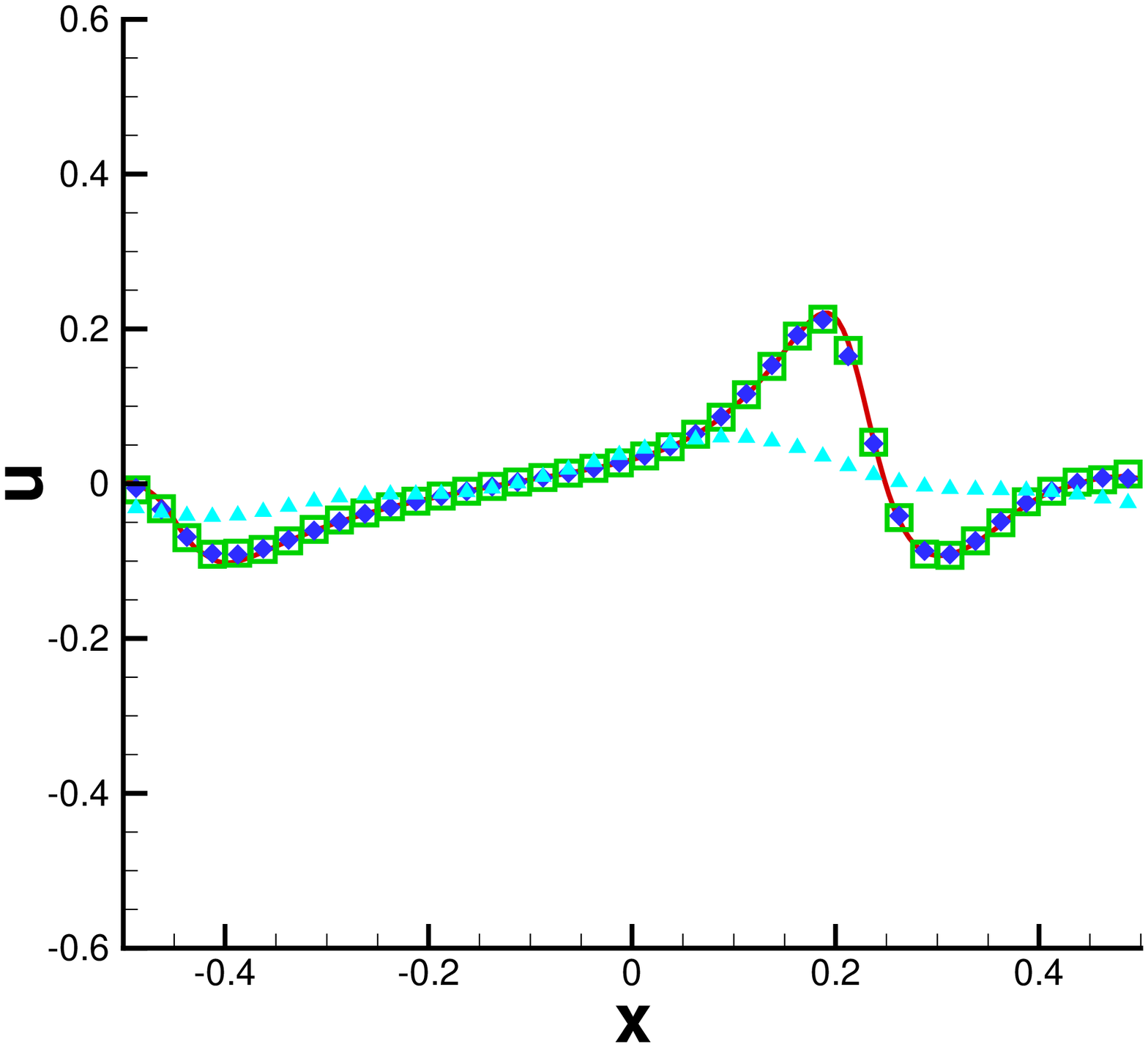} \\
\includegraphics[totalheight=1.8in]{./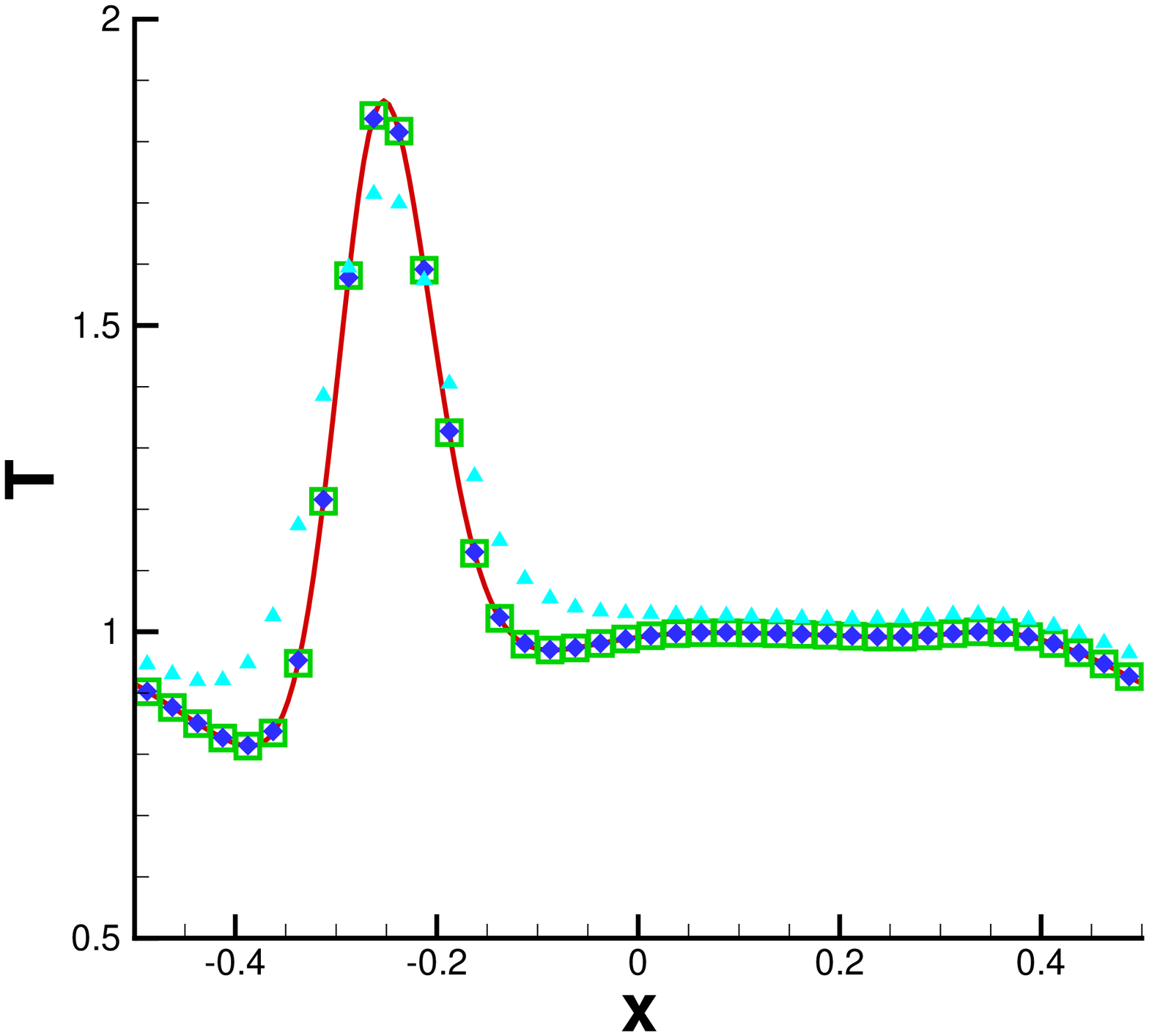},
\includegraphics[totalheight=1.8in]{./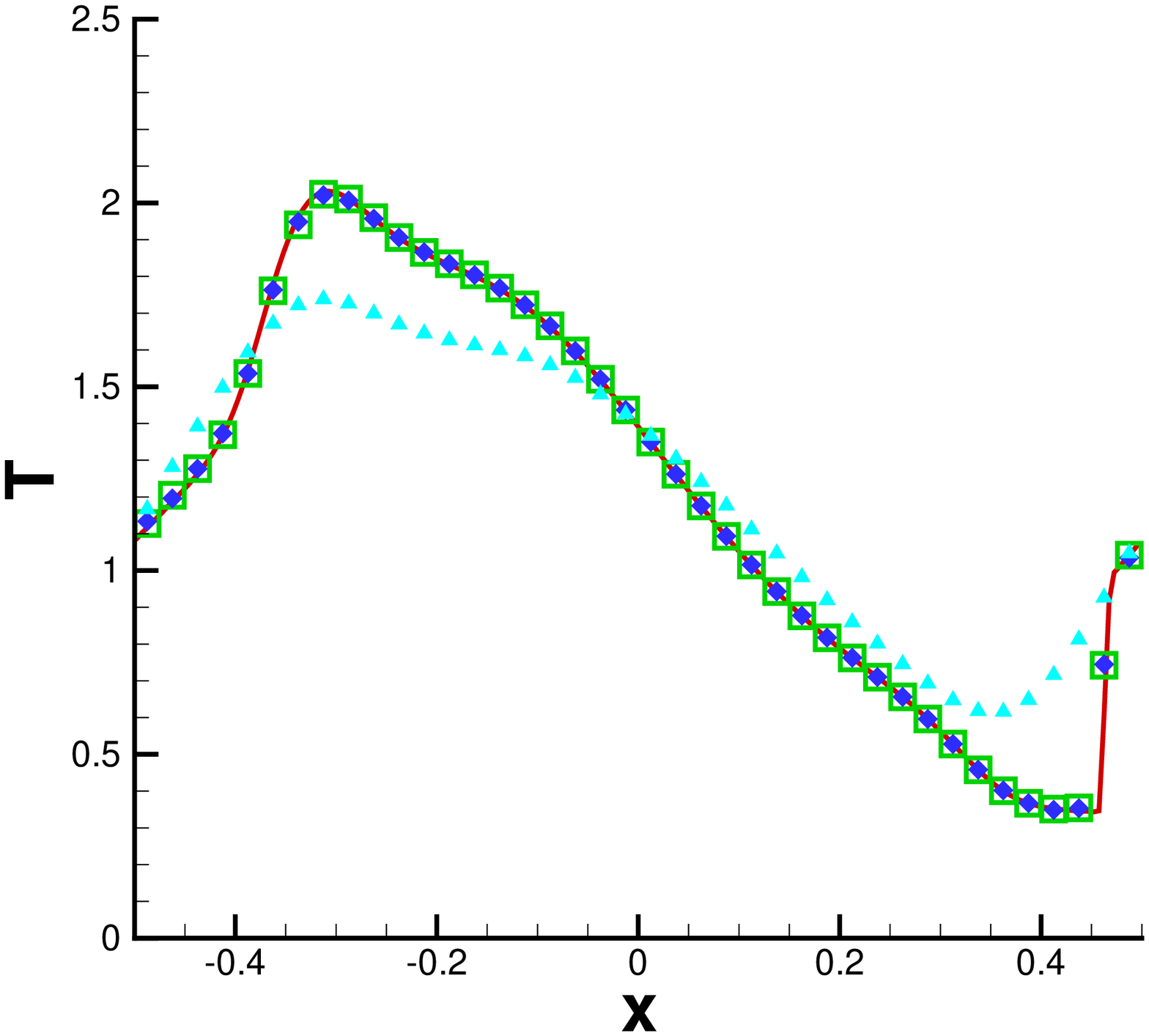},
\includegraphics[totalheight=1.8in]{./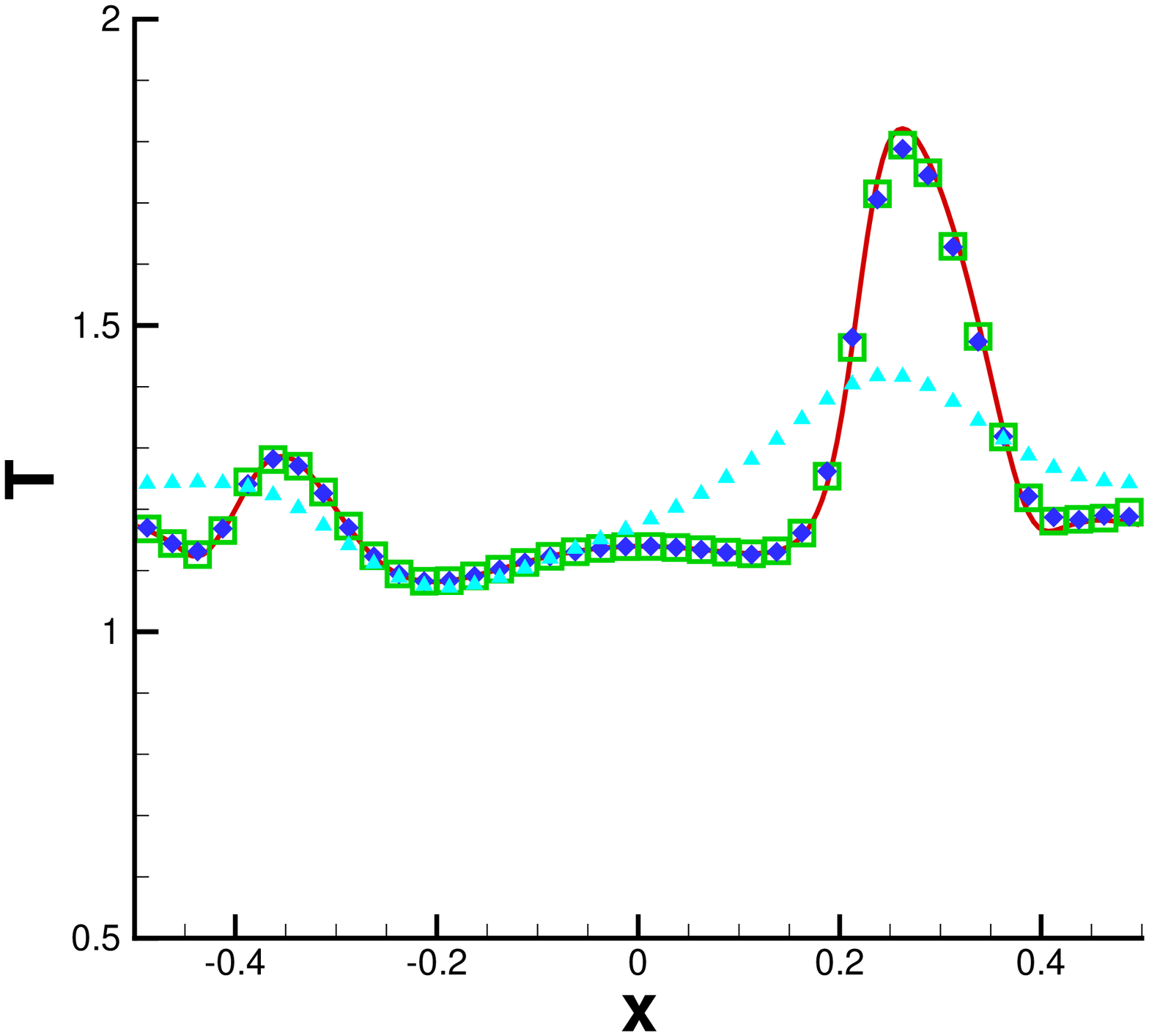}
\caption{Mixed regime problem with $\eps(x)$ in \eqref{eps} with $a_0=11$ and $\eps_0=10^{-6}$ on the domain $[-0.5,0.5]\times[-10,10]$. $N_x=40$ and $N_v=100$. Symbols: square NDG3, diamond NDG2, delta NDG1. Solid line: reference solution of NDG3 with $N_x=200$ and $N_v=200$. From left to right: time $t=0.1, 0.3, 0.45$. From top to bottom, the distribution function $f$ at $x=0$ along $v$ direction, the density $\rho$, the mean velocity $u$ and the temperature $T$. With TVB limiter and $M_{tvb}=20$.}
\label{fig10}
\end{figure}

%\begin{figure}[ht]
%\centering
%\includegraphics[totalheight=1.8in]{./pic1/f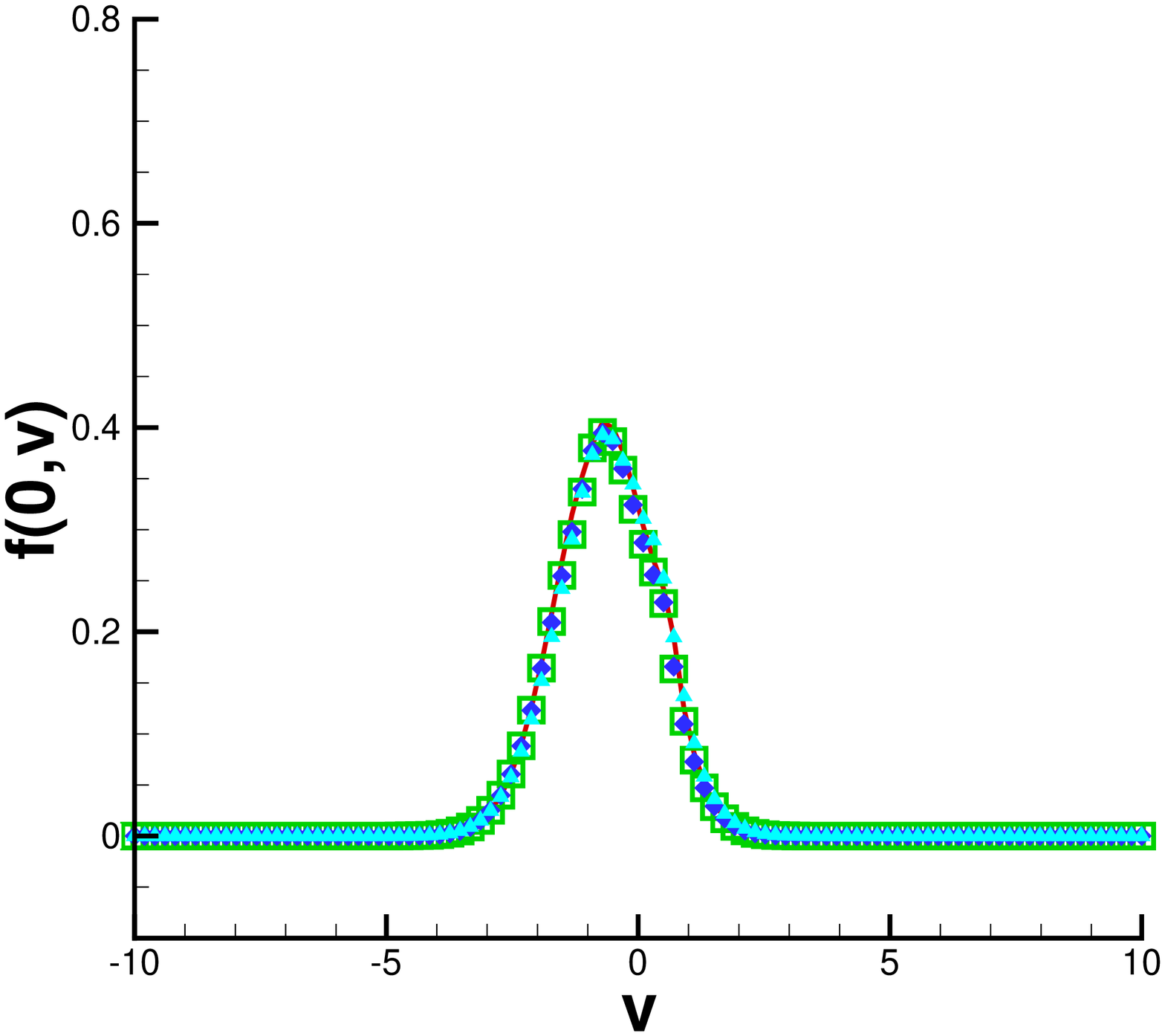},
%\includegraphics[totalheight=1.8in]{./pic1/f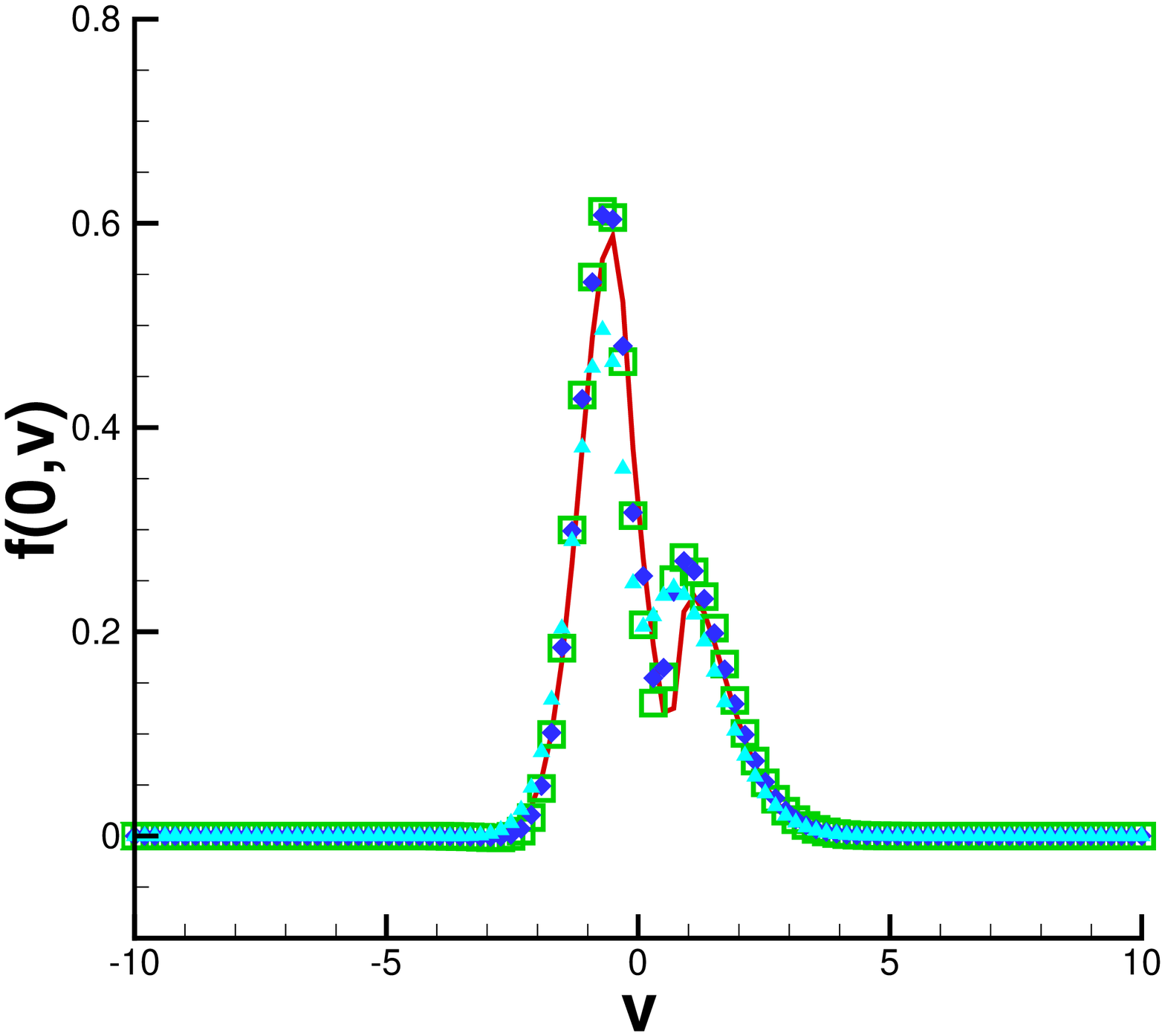},
%\includegraphics[totalheight=1.8in]{./pic1/f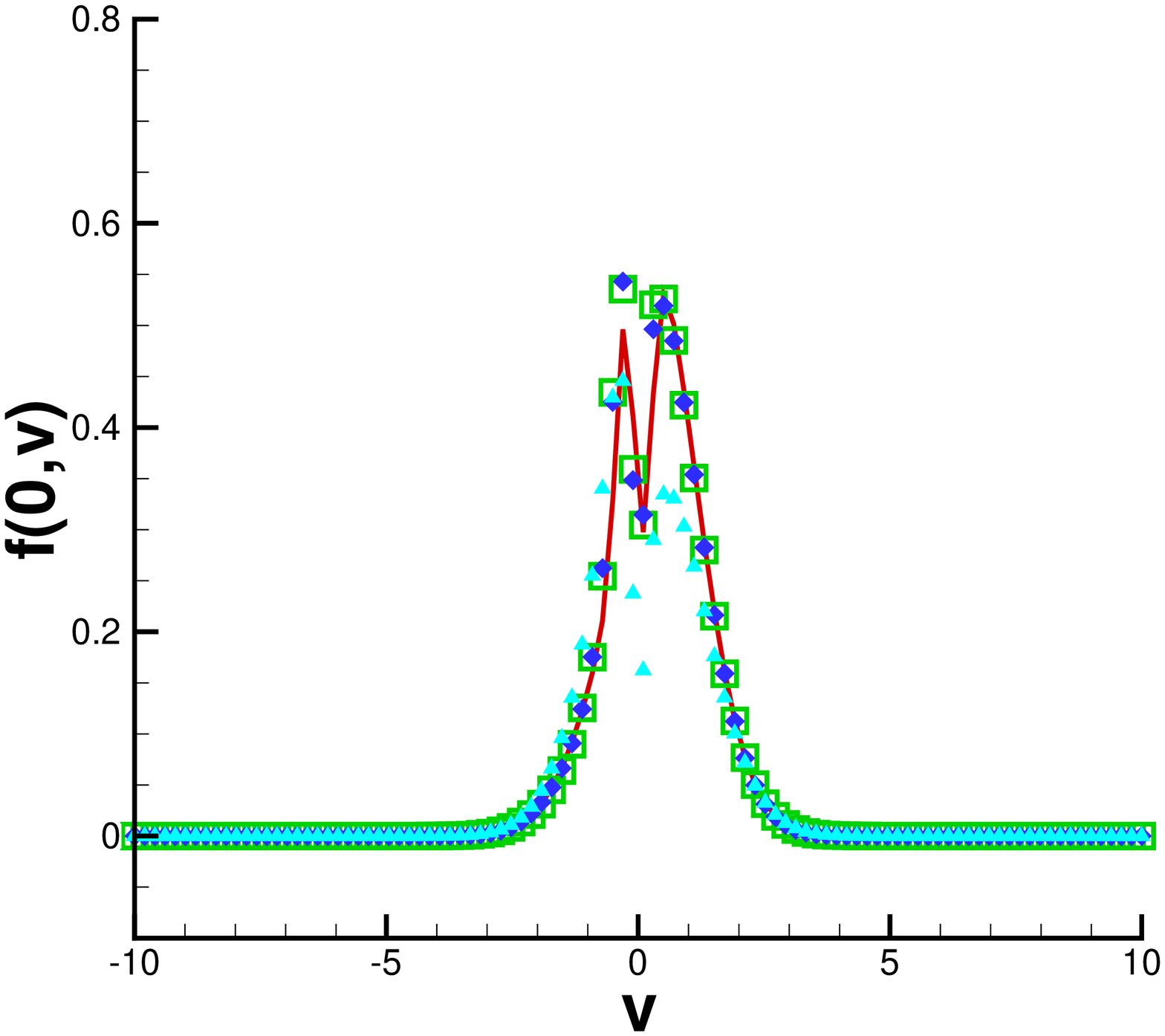} \\
%\includegraphics[totalheight=1.8in]{./pic1/r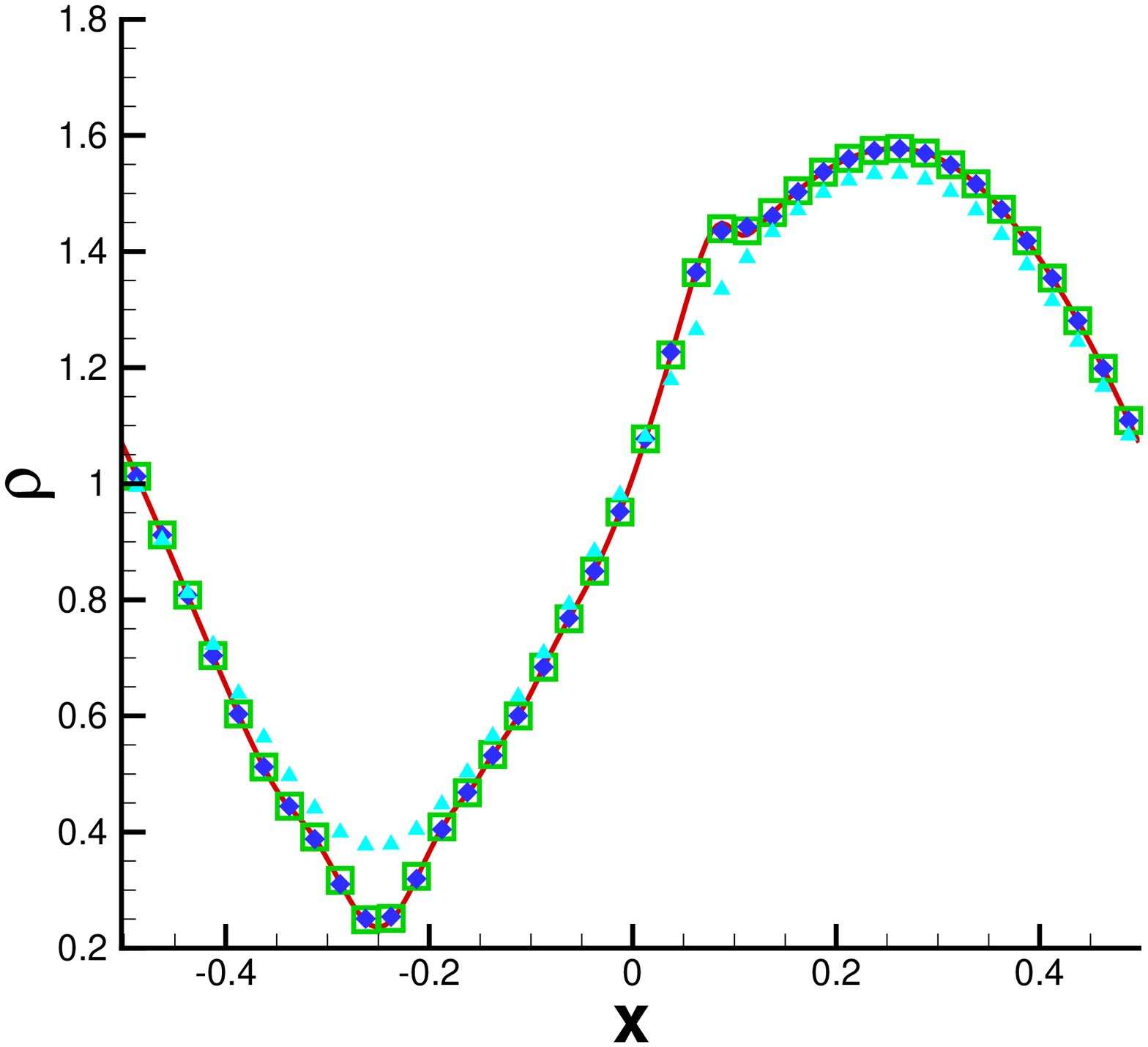},
%\includegraphics[totalheight=1.8in]{./pic1/r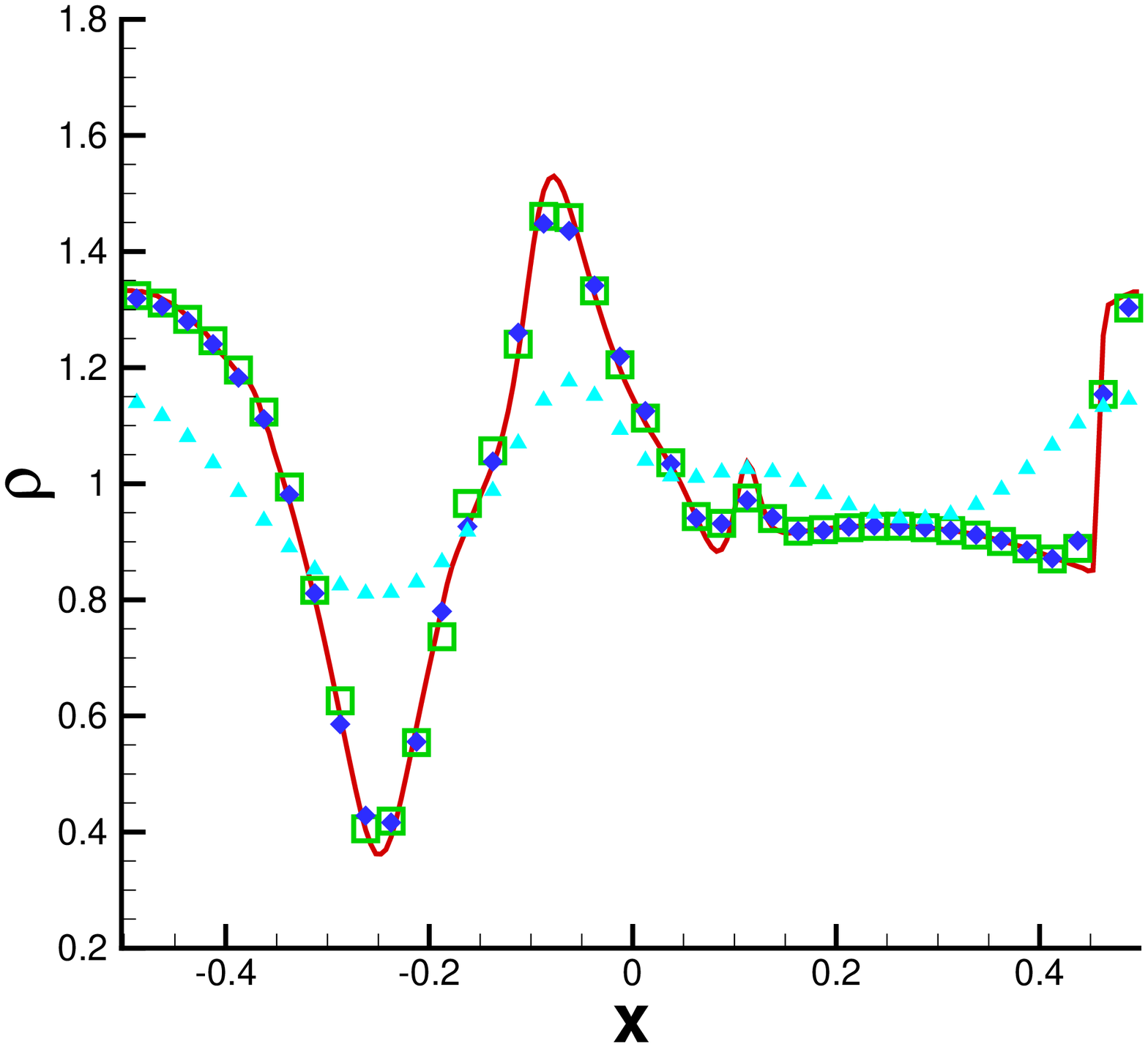},
%\includegraphics[totalheight=1.8in]{./pic1/r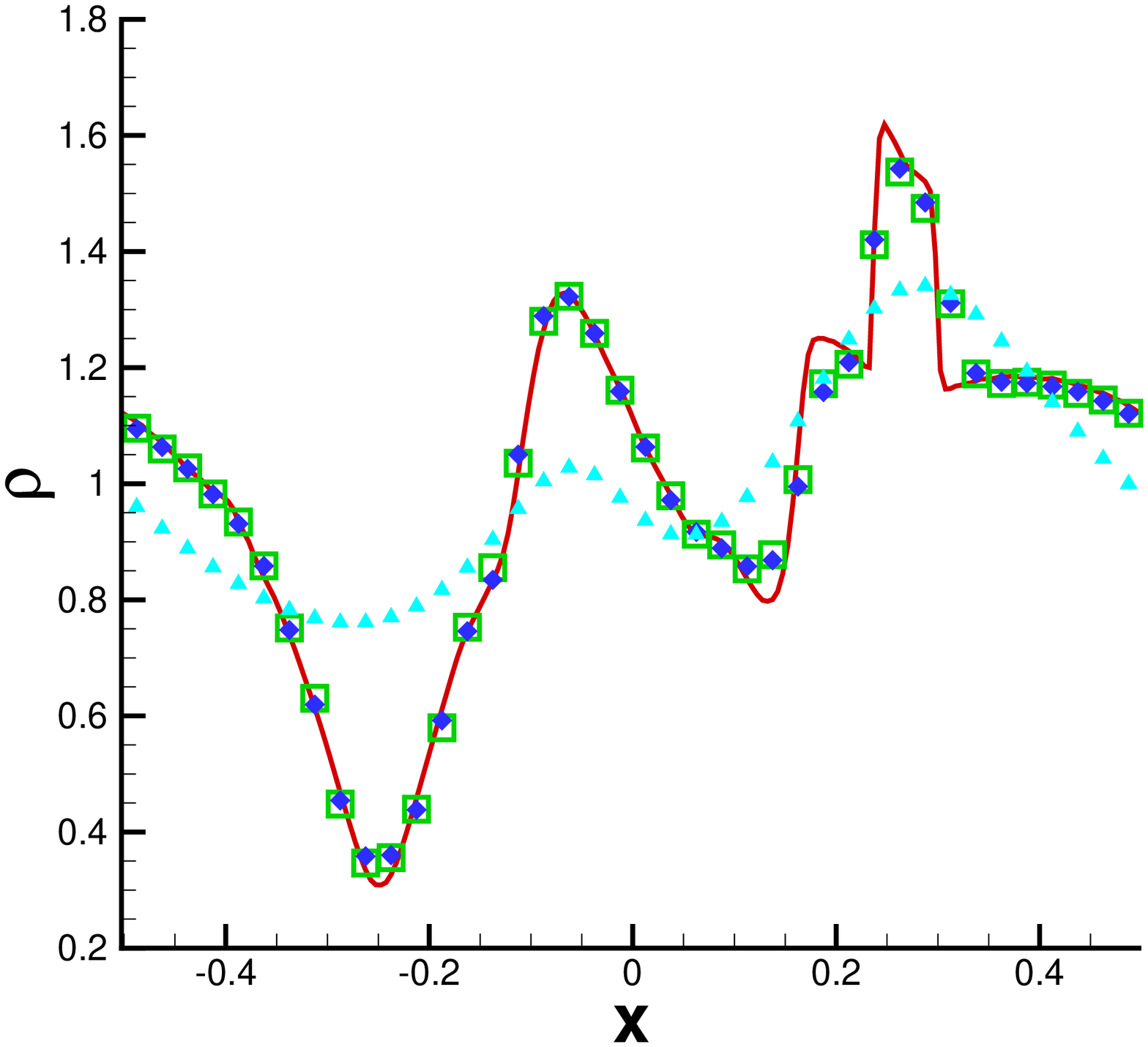} \\
%\includegraphics[totalheight=1.8in]{./pic1/u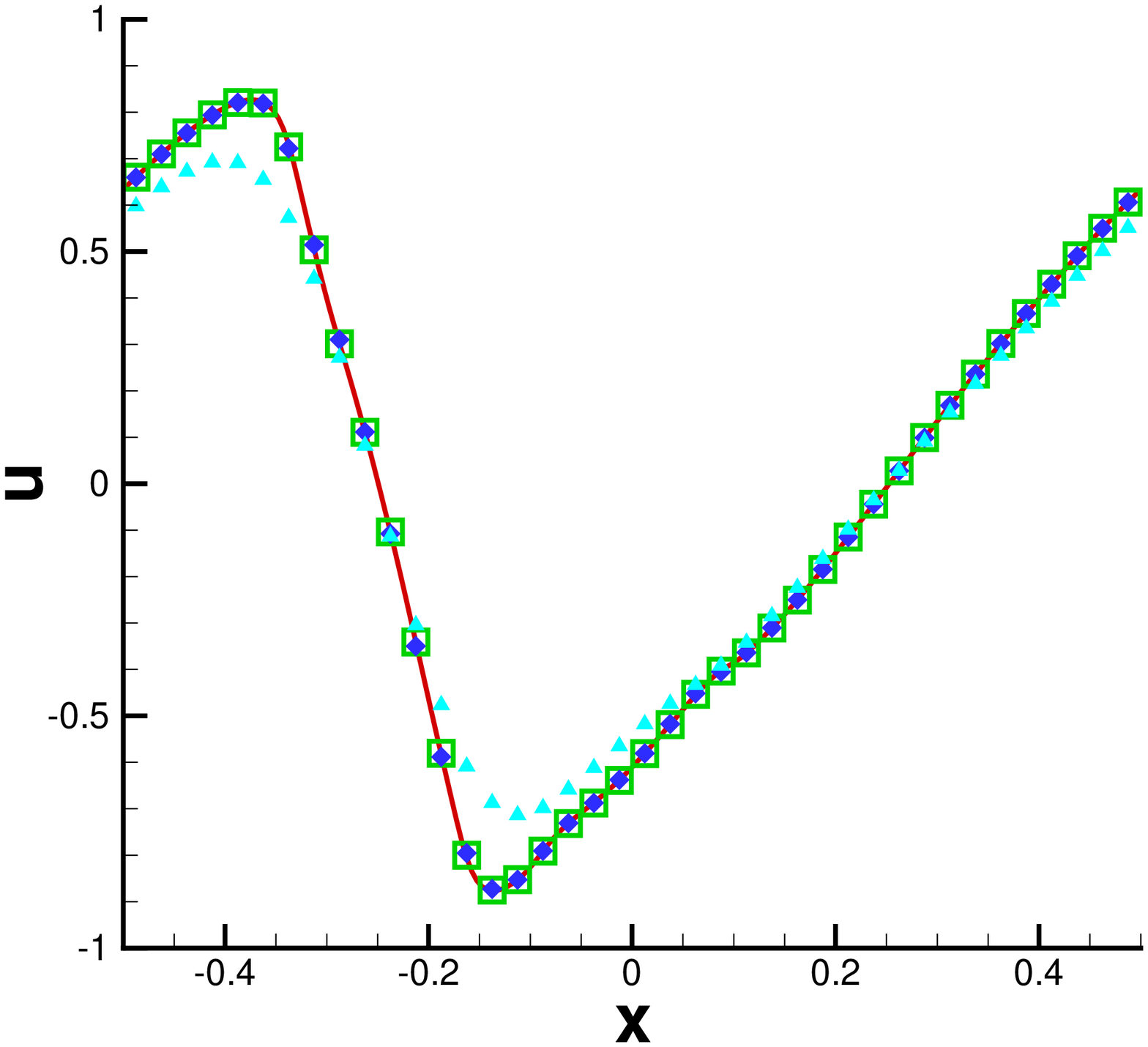},
%\includegraphics[totalheight=1.8in]{./pic1/u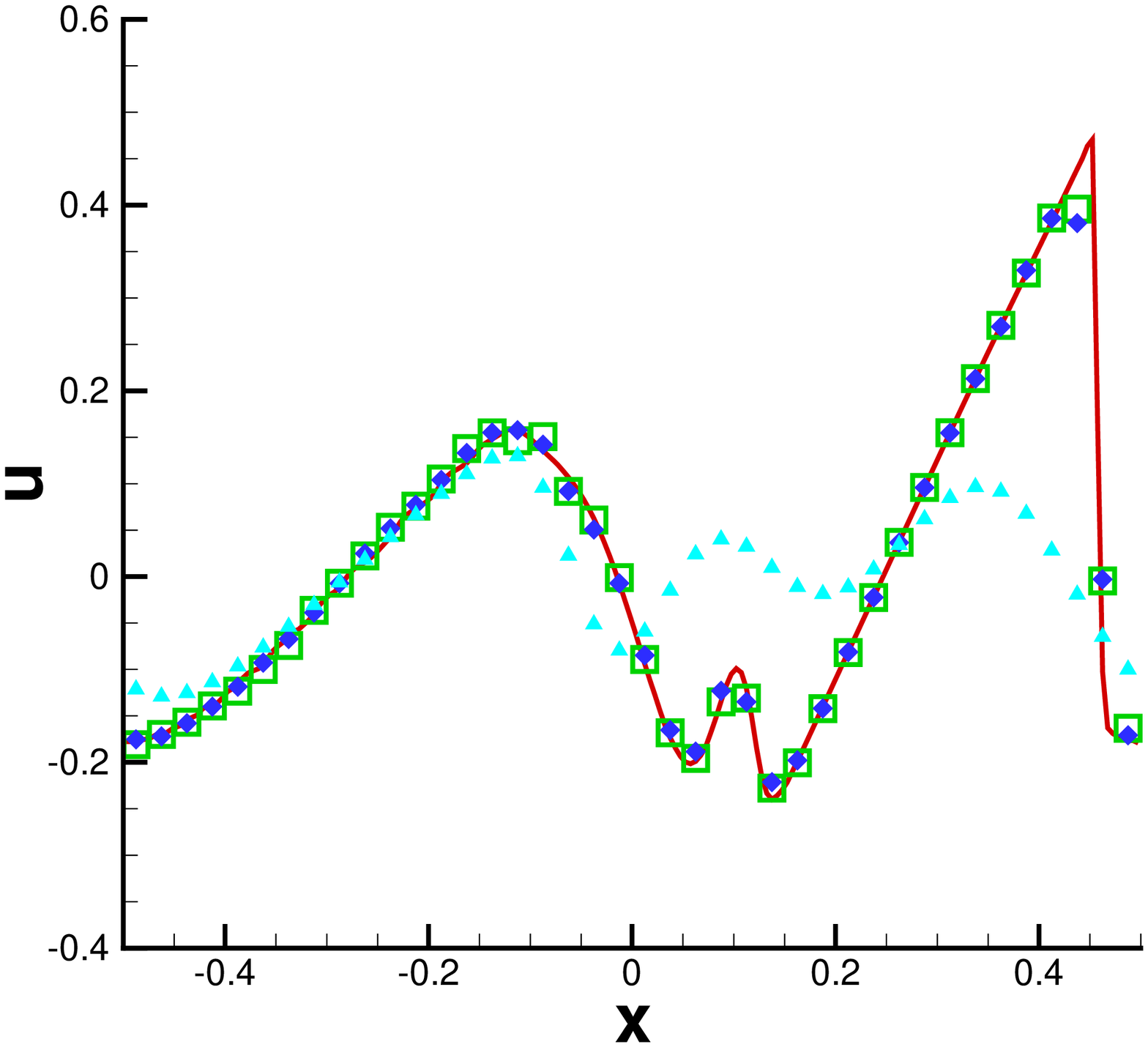},
%\includegraphics[totalheight=1.8in]{./pic1/u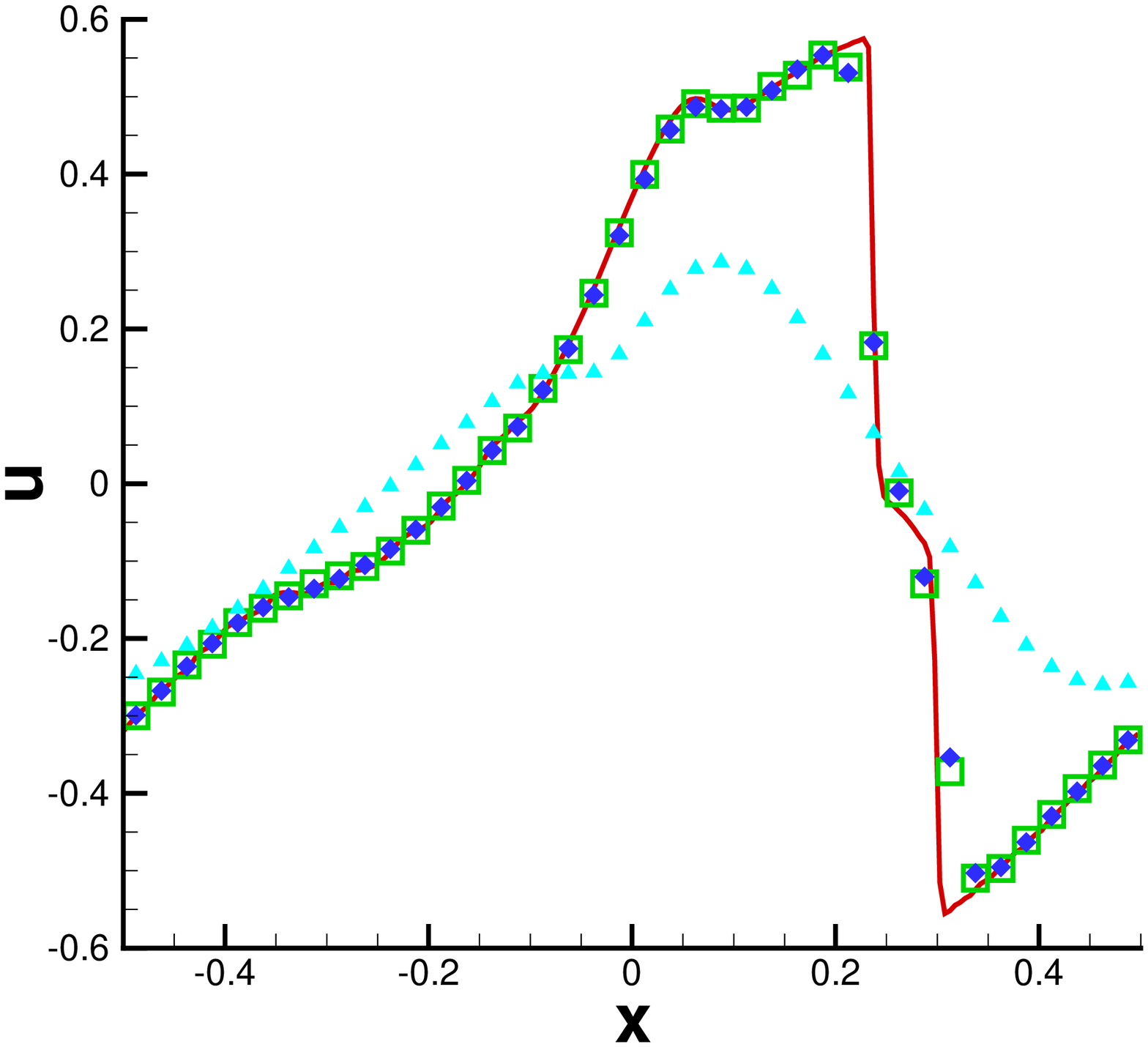} \\
%\includegraphics[totalheight=1.8in]{./pic1/t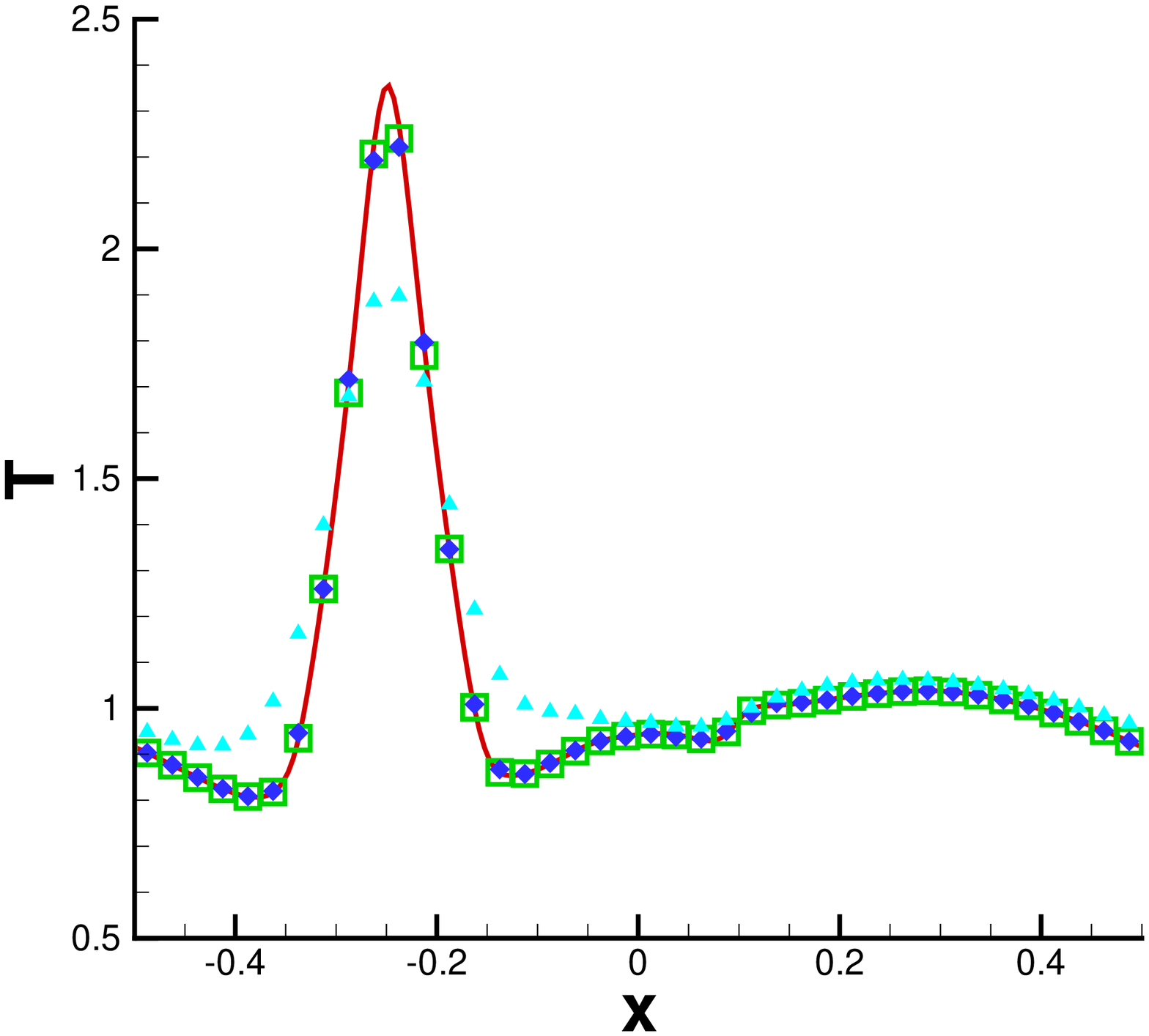},
%\includegraphics[totalheight=1.8in]{./pic1/t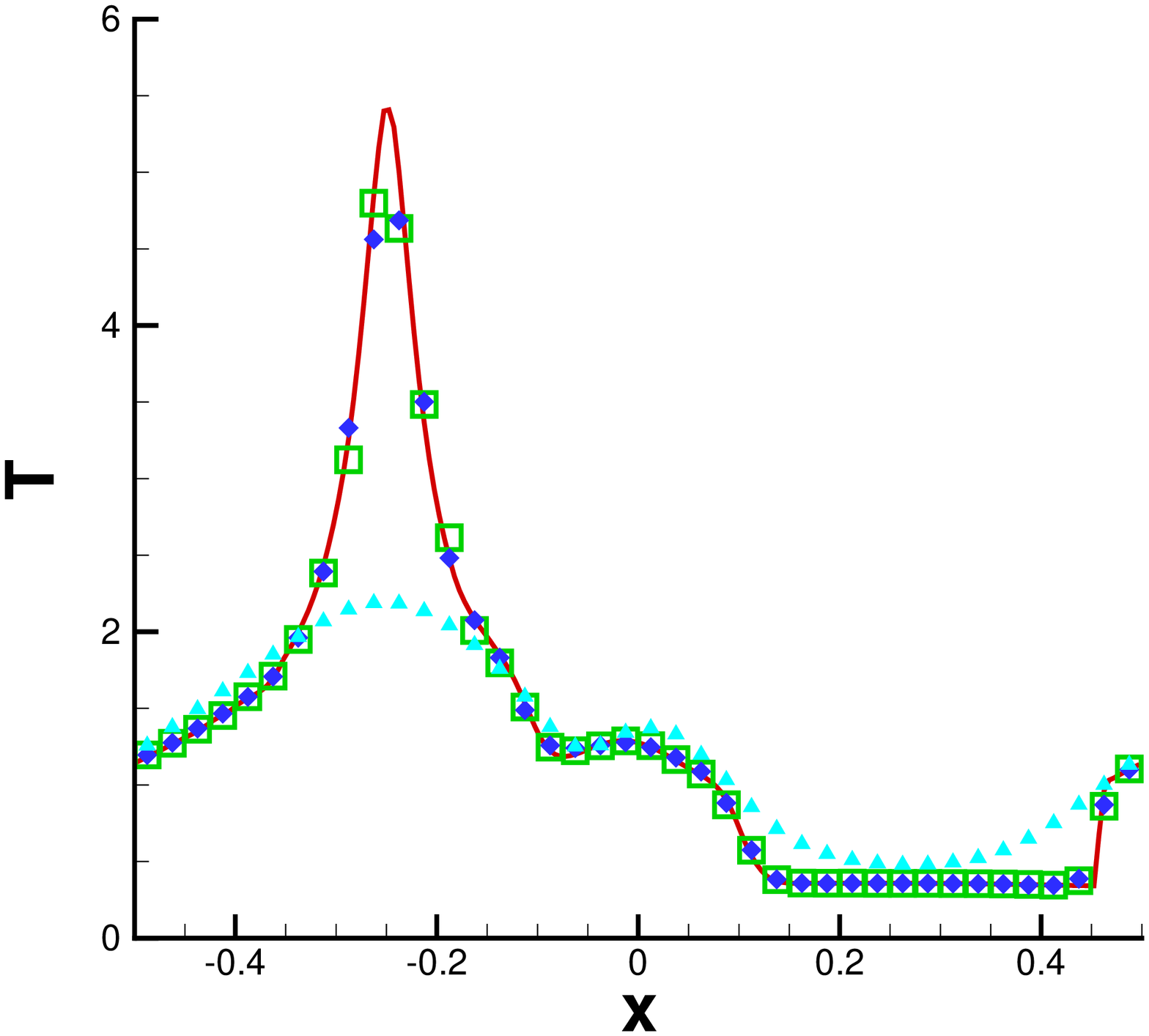},
%\includegraphics[totalheight=1.8in]{./pic1/t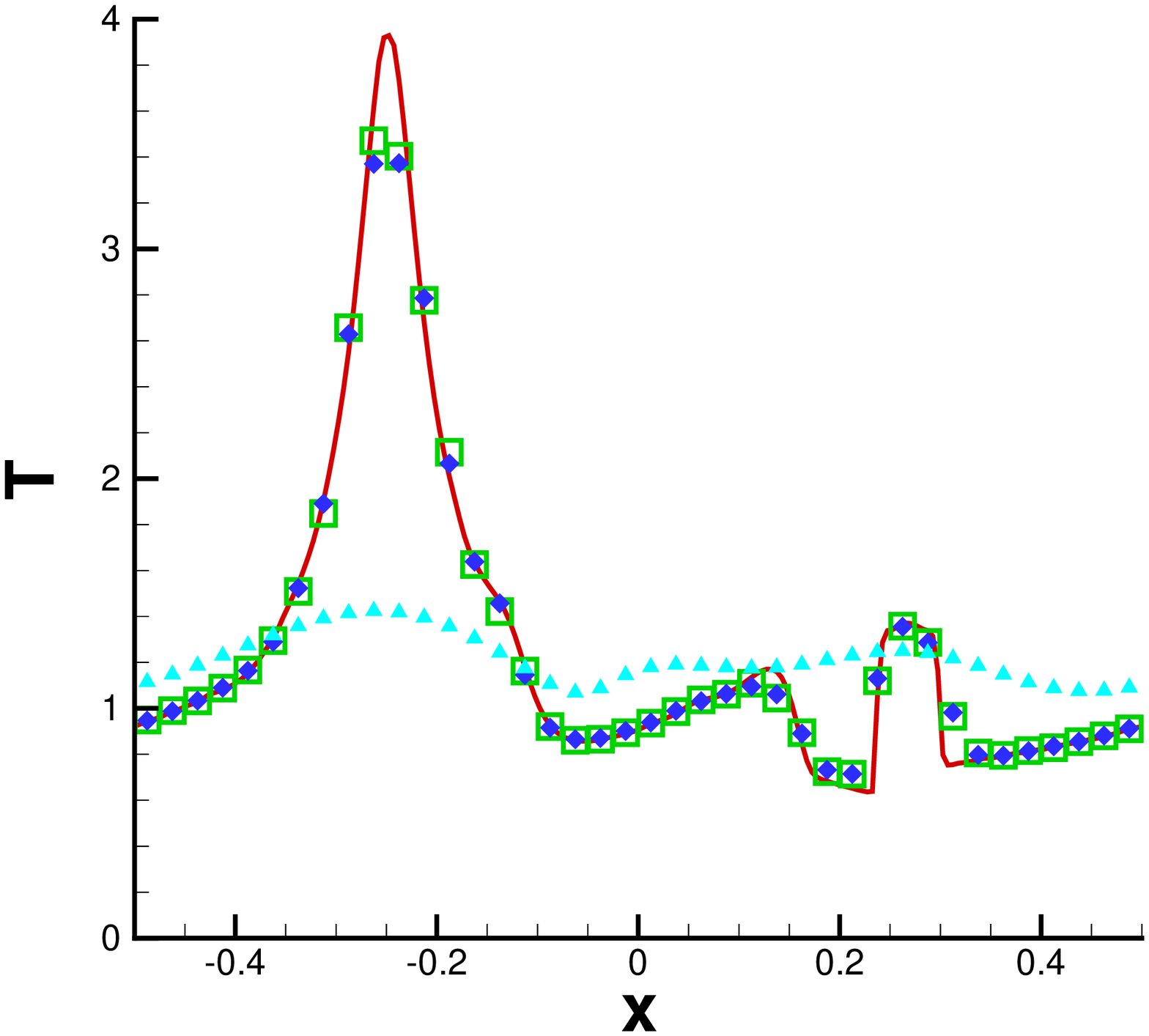}
%\caption{Mixed regime problem with $\eps(x)$ in \eqref{eps} with $a_0=11$ and $\eps_0=10^{-6}$ on the domain %$[-0.5,0.5]\times[-10,10]$.
%$N_v=100$. Symbols with $N_x=40$: square NDG3, diamond NDG2, delta NDG1. Solid line: reference solution of %NDG3 with $N_x=200$ and $N_v=100$. From left to right: time $t=0.1, 0.3, 0.45$. From top to bottom, the %distribution function $f$ at $x=0$ along $v$ direction, the density $\rho$, the mean velocity $u$ and the %temperature $T$. With TVB limiter and $M_{tvb}=20$. Scheme I.}
%\label{fig102}
%\end{figure}

\begin{figure}[ht]
\centering
\includegraphics[totalheight=1.8in]{./f51.eps},
\includegraphics[totalheight=1.8in]{./f52.eps},
\includegraphics[totalheight=1.8in]{./f53.eps} \\
\includegraphics[totalheight=1.8in]{./r51.eps},
\includegraphics[totalheight=1.8in]{./r52.eps},
\includegraphics[totalheight=1.8in]{./r53.eps} \\
\includegraphics[totalheight=1.8in]{./u51.eps},
\includegraphics[totalheight=1.8in]{./u52.eps},
\includegraphics[totalheight=1.8in]{./u53.eps} \\
\includegraphics[totalheight=1.8in]{./t51.eps},
\includegraphics[totalheight=1.8in]{./t52.eps},
\includegraphics[totalheight=1.8in]{./t53.eps}
\caption{Mixed regime problem with $\eps(x)$ in \eqref{eps} with $a_0=40$ and $\eps_0=10^{-6}$ on the domain $[-0.5,0.5]\times[-10,10]$.
$N_x=40$ and $N_v=100$. Symbols: square NDG3, diamond NDG2, delta NDG1. Solid line: reference solution of NDG3 with $N_x=200$ and $N_v=200$. From left to right: time
$t=0.1, 0.3, 0.45$. From top to bottom, the distribution function $f$ at $x=0$ along $v$ direction,
the density $\rho$, the mean velocity $u$ and the temperature $T$. With TVB limiter and $M_{tvb}=20$.}
\label{fig11}
\end{figure}

\begin{figure}[ht]
\centering
\includegraphics[totalheight=1.8in]{./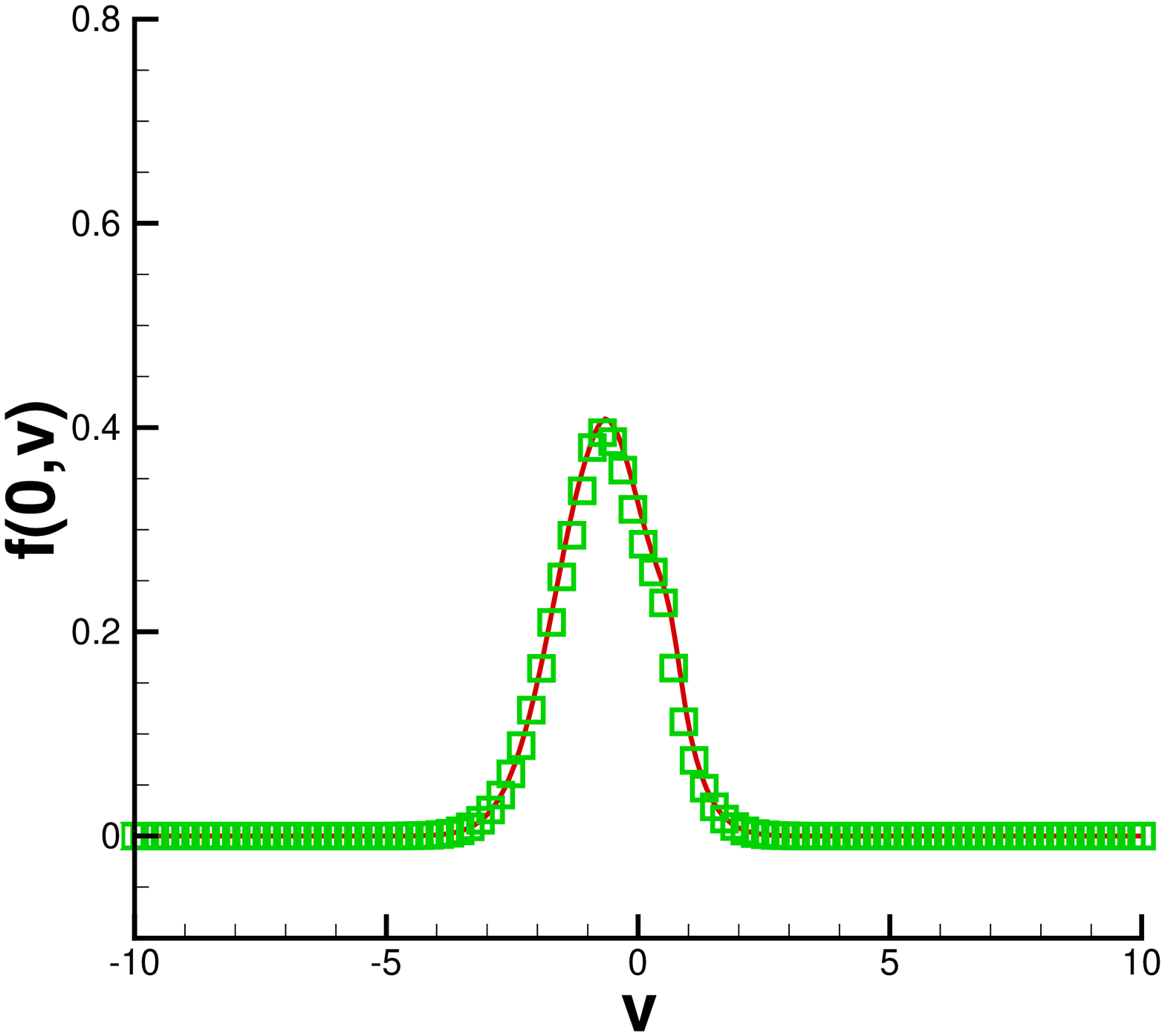},
\includegraphics[totalheight=1.8in]{./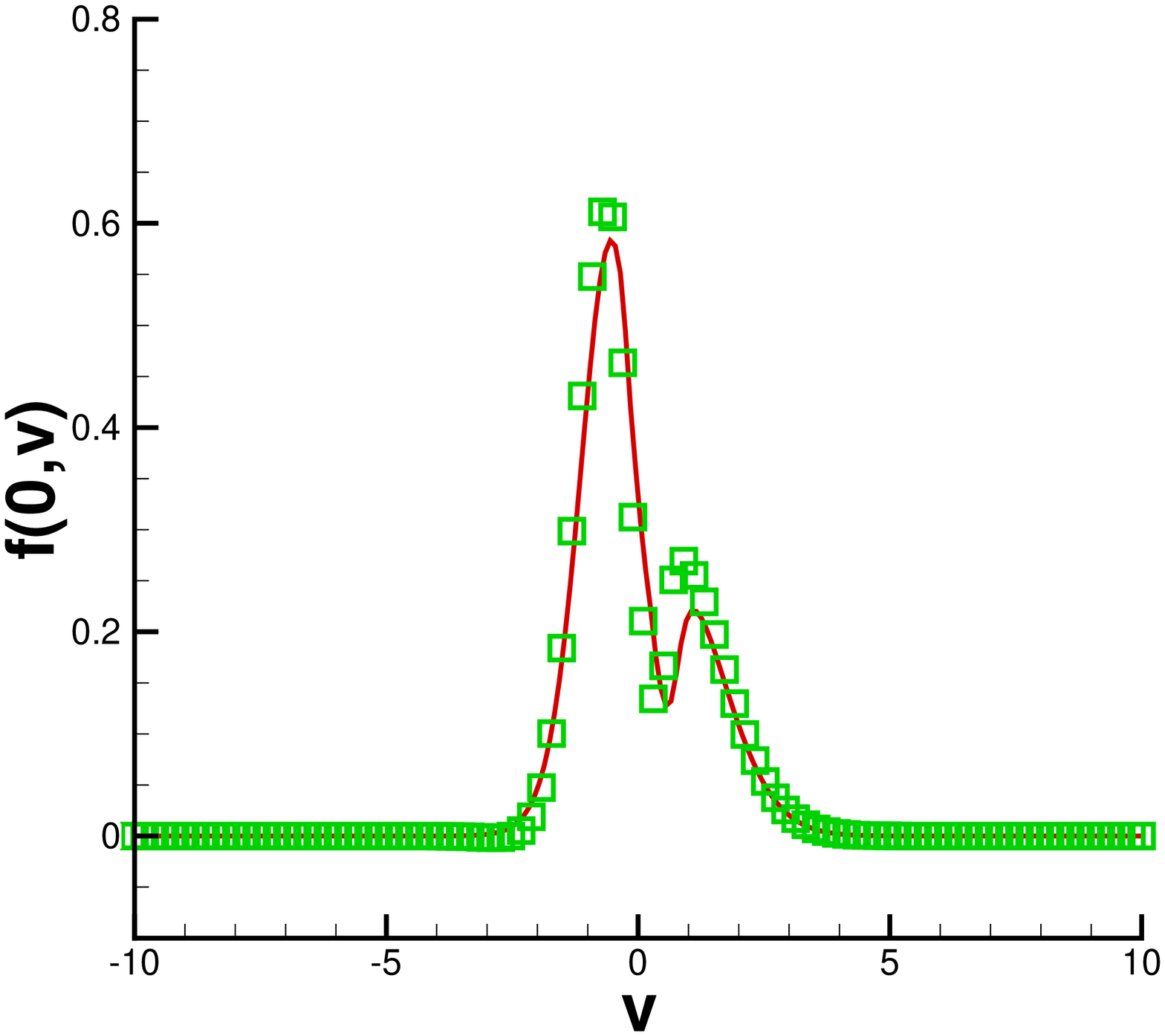},
\includegraphics[totalheight=1.8in]{./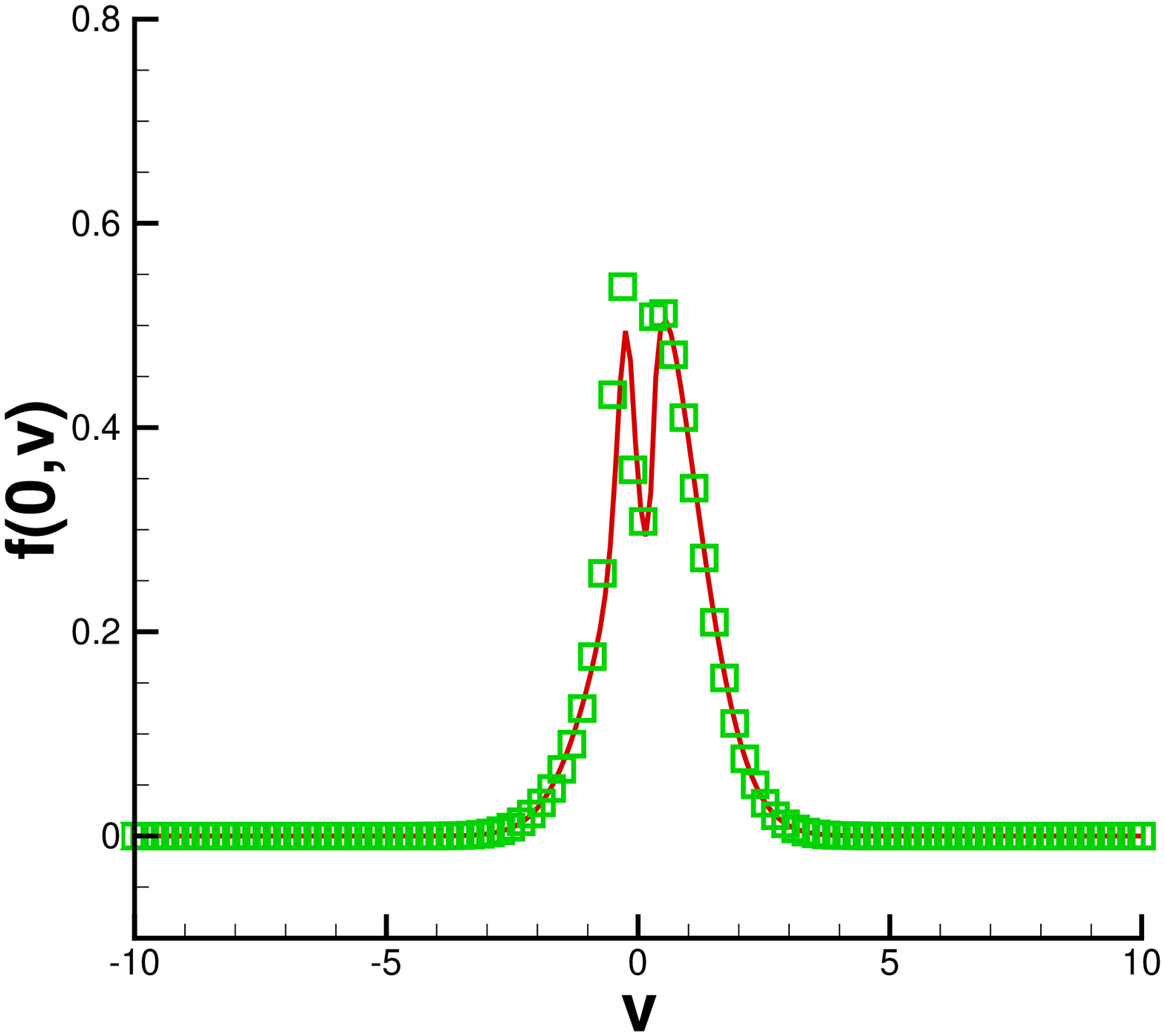} \\
\includegraphics[totalheight=1.8in]{./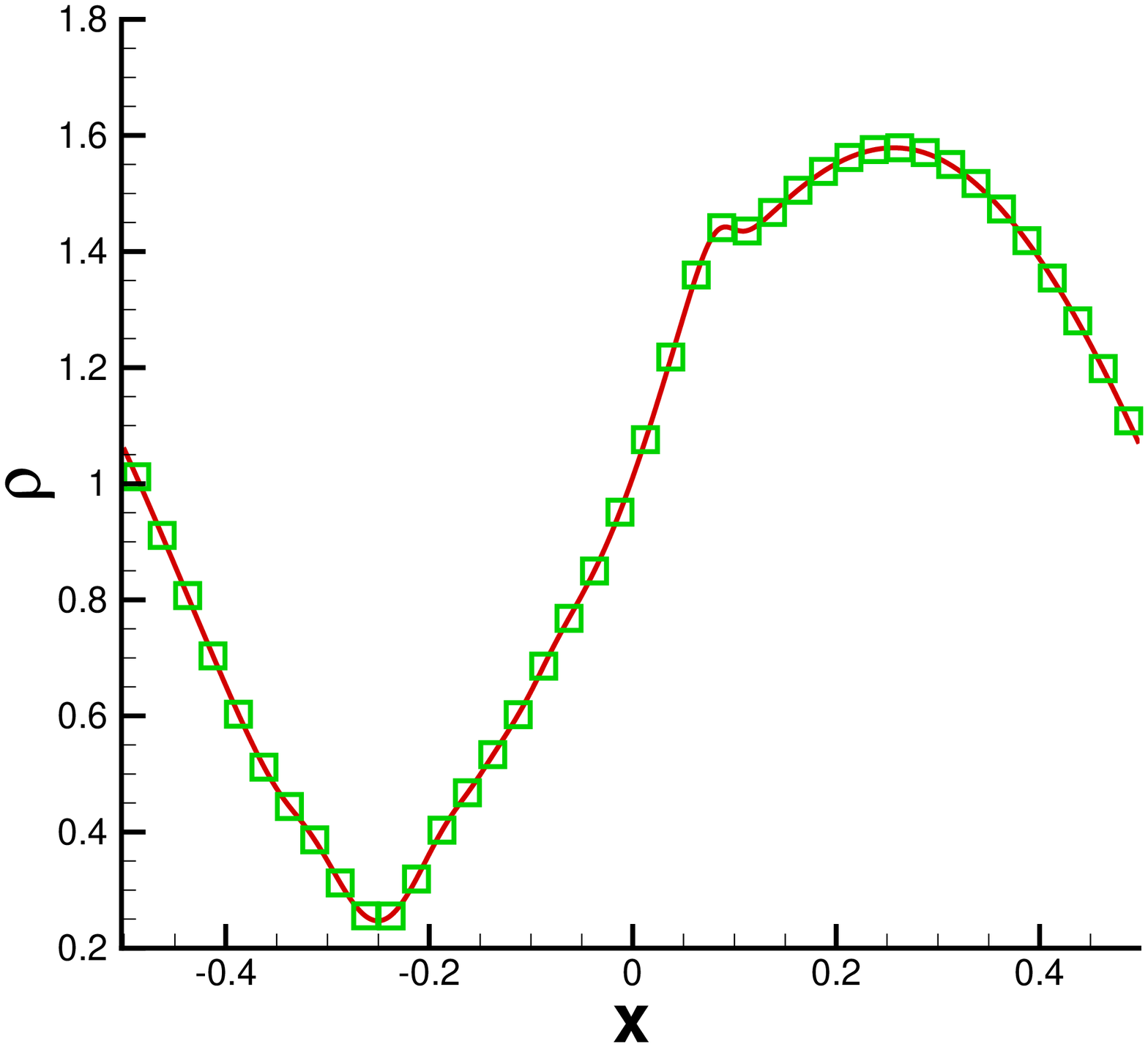},
\includegraphics[totalheight=1.8in]{./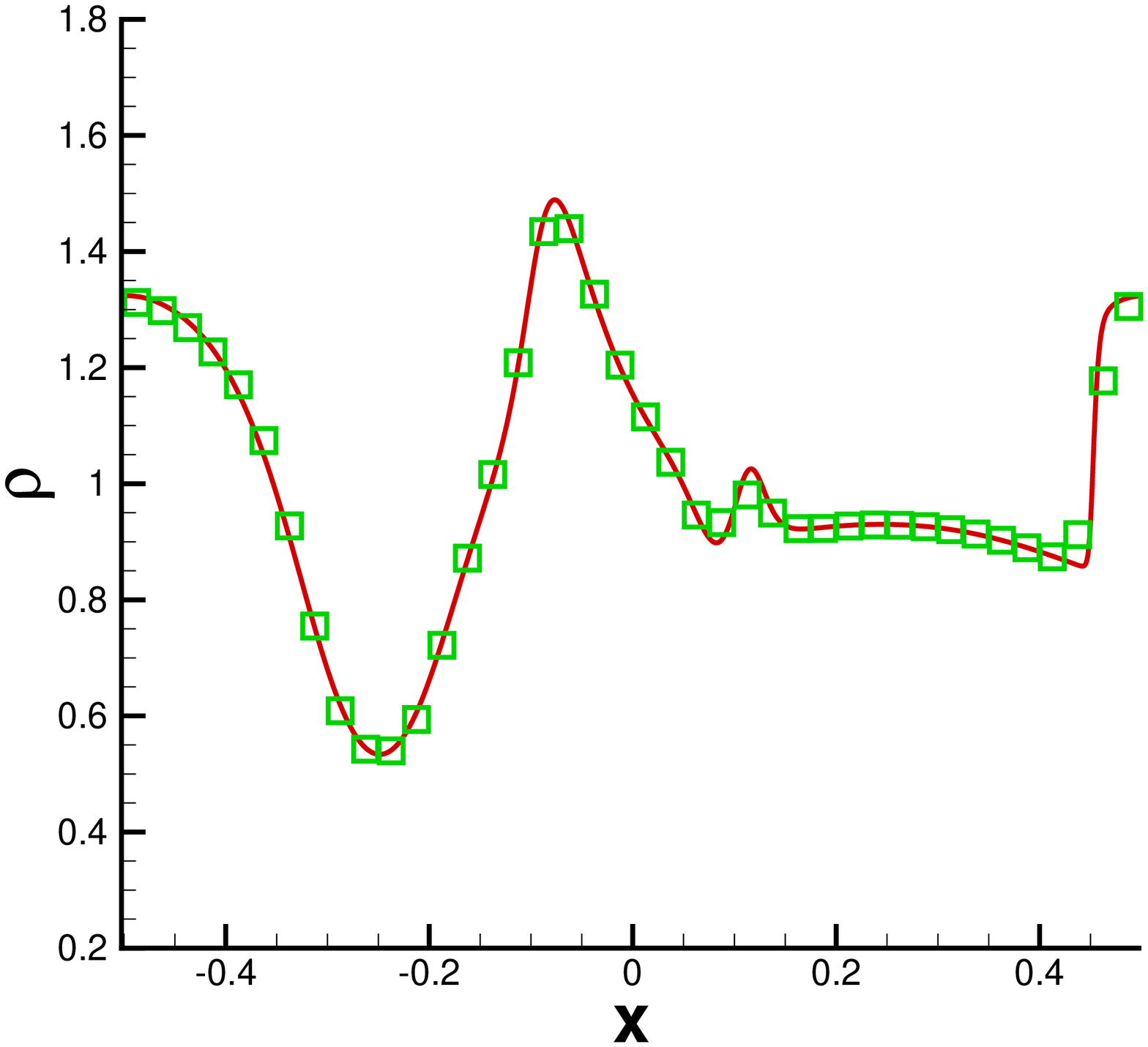},
\includegraphics[totalheight=1.8in]{./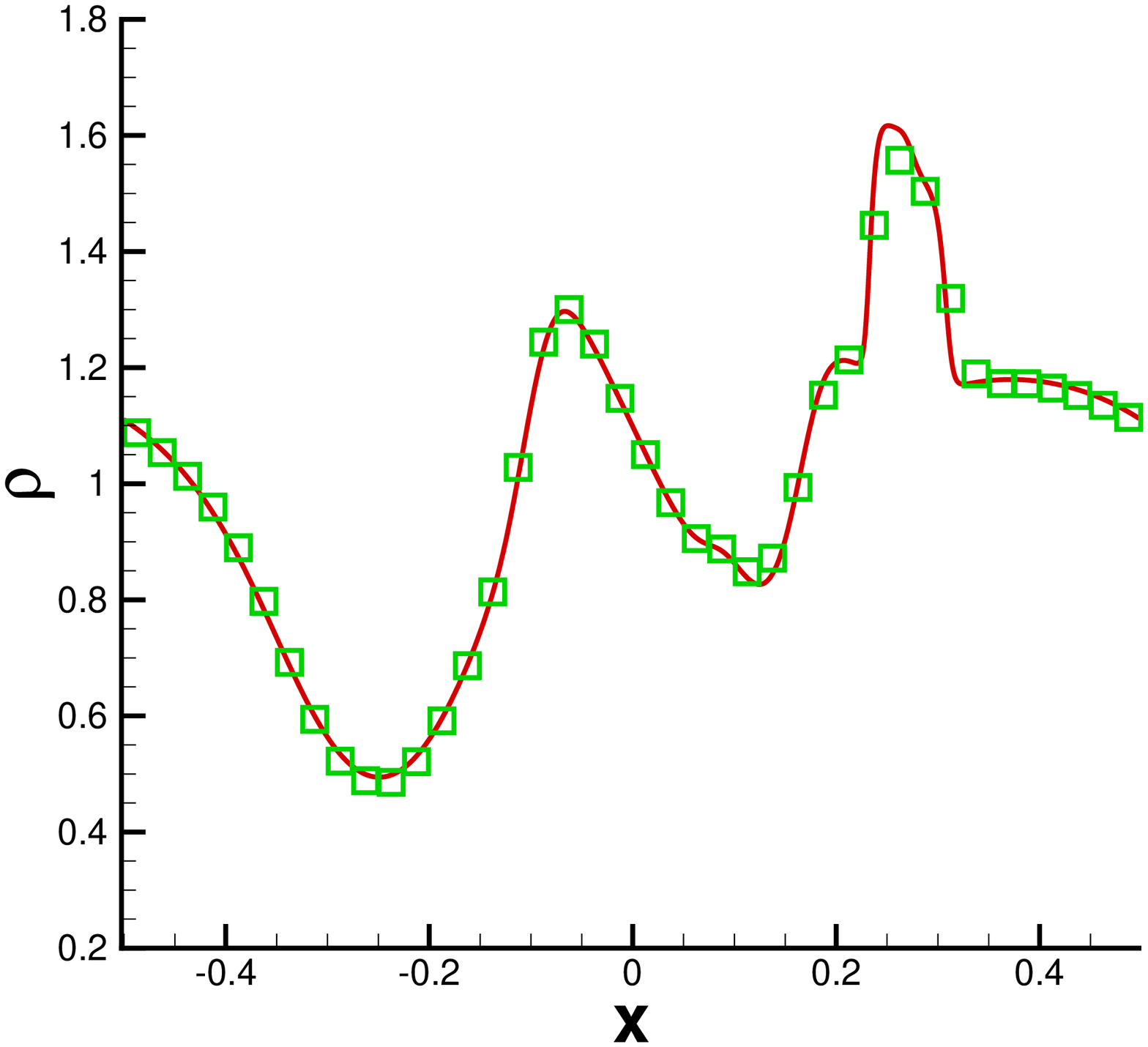} \\
\includegraphics[totalheight=1.8in]{./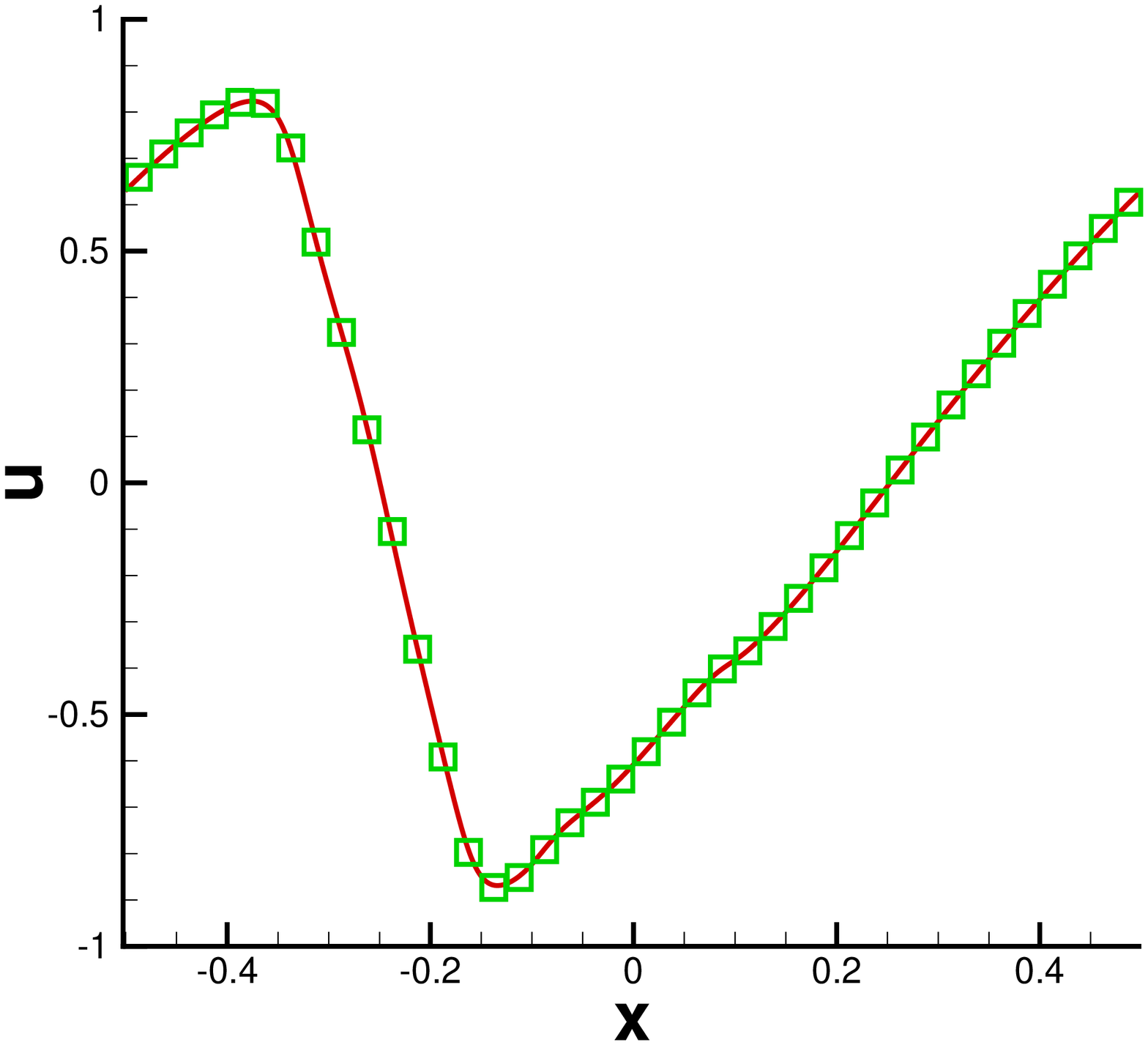},
\includegraphics[totalheight=1.8in]{./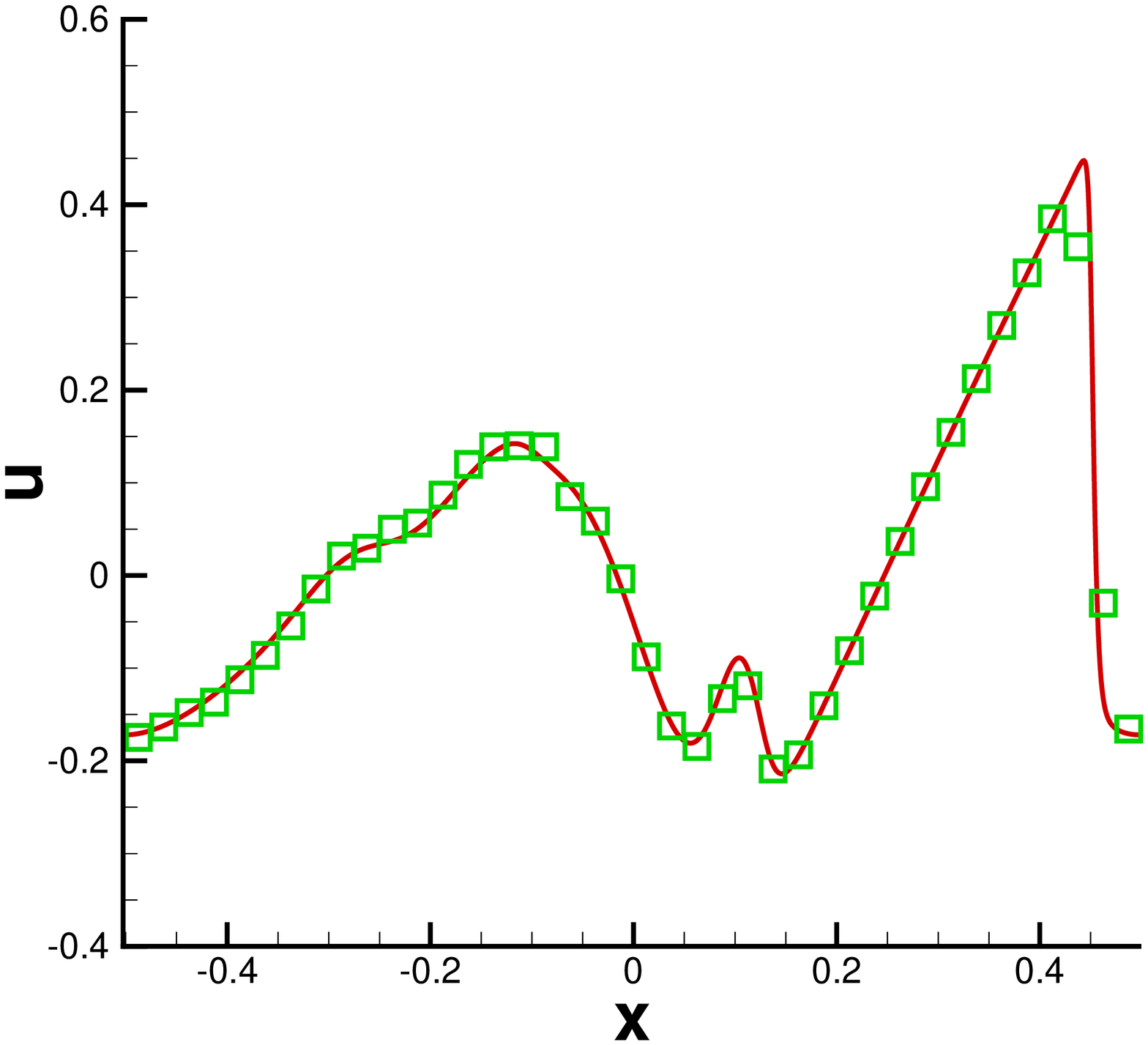},
\includegraphics[totalheight=1.8in]{./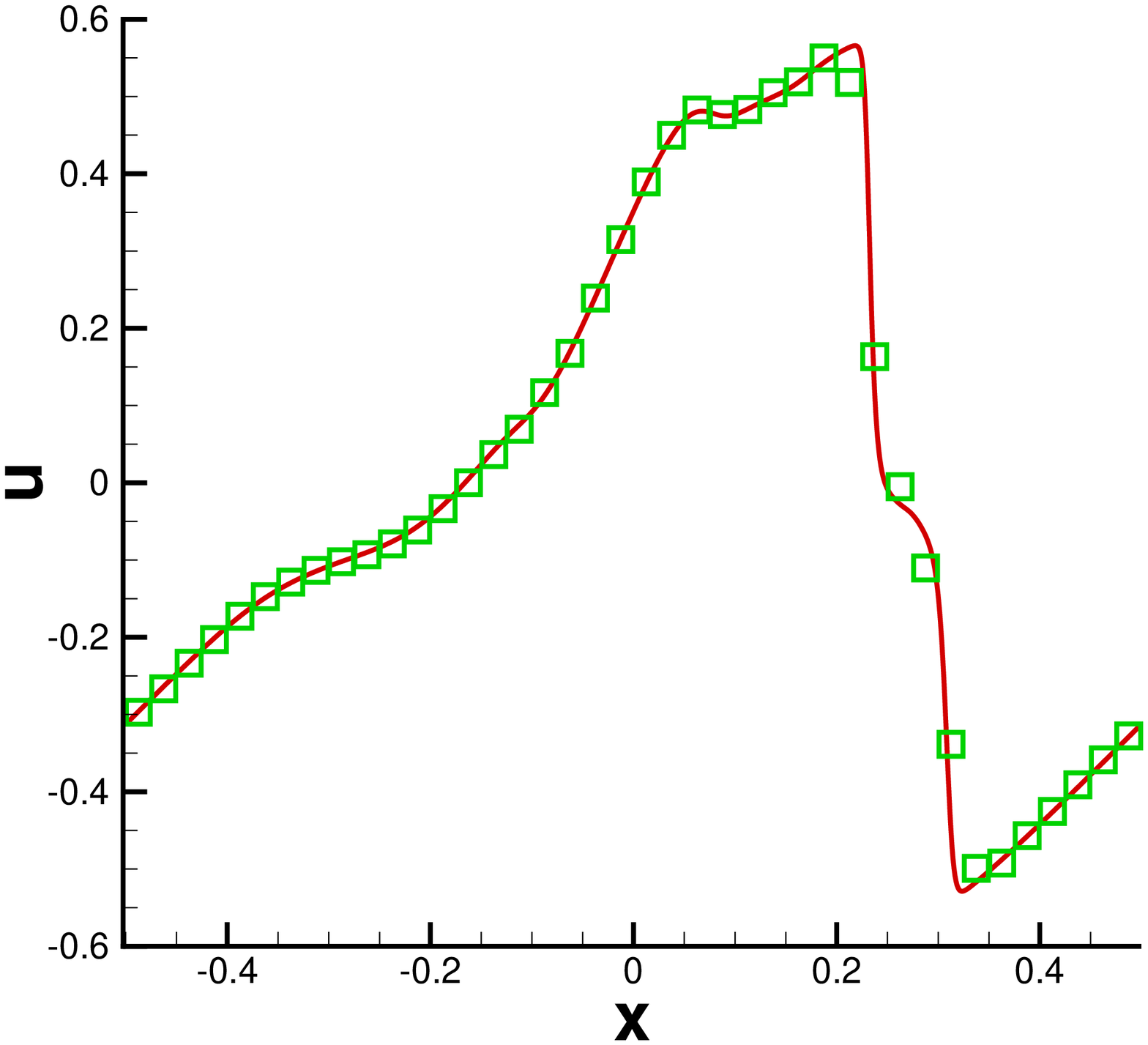} \\
\includegraphics[totalheight=1.8in]{./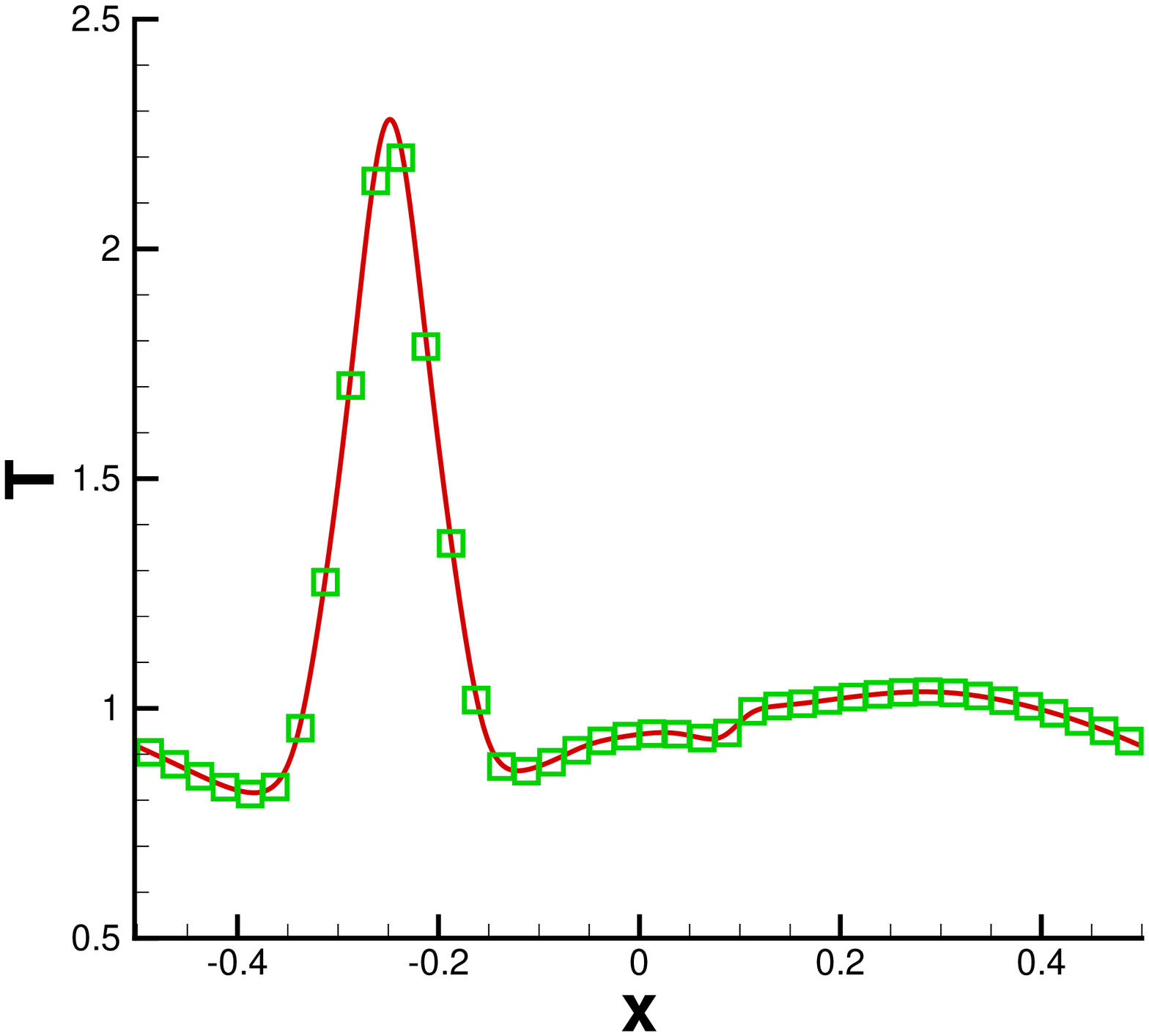},
\includegraphics[totalheight=1.8in]{./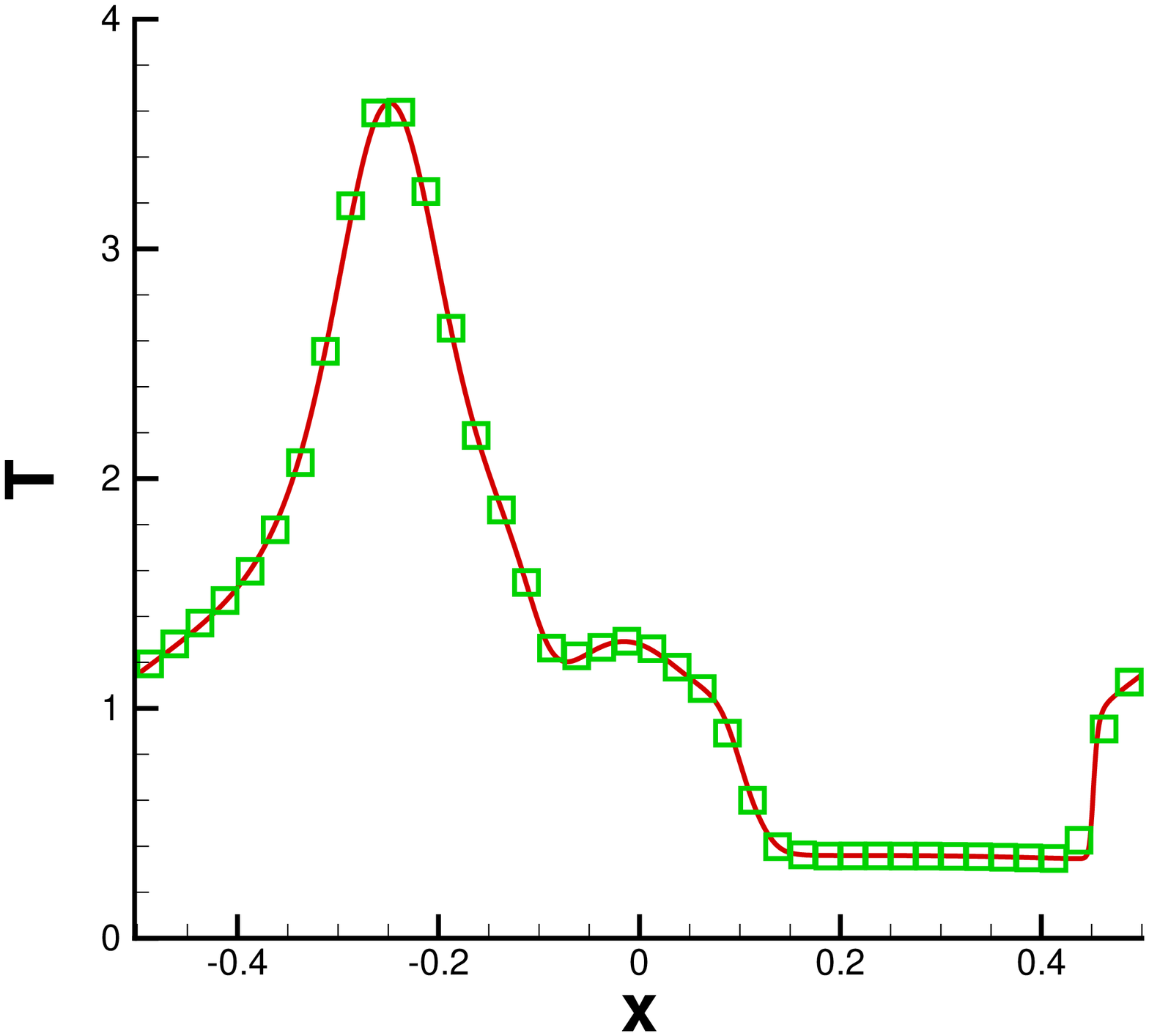},
\includegraphics[totalheight=1.8in]{./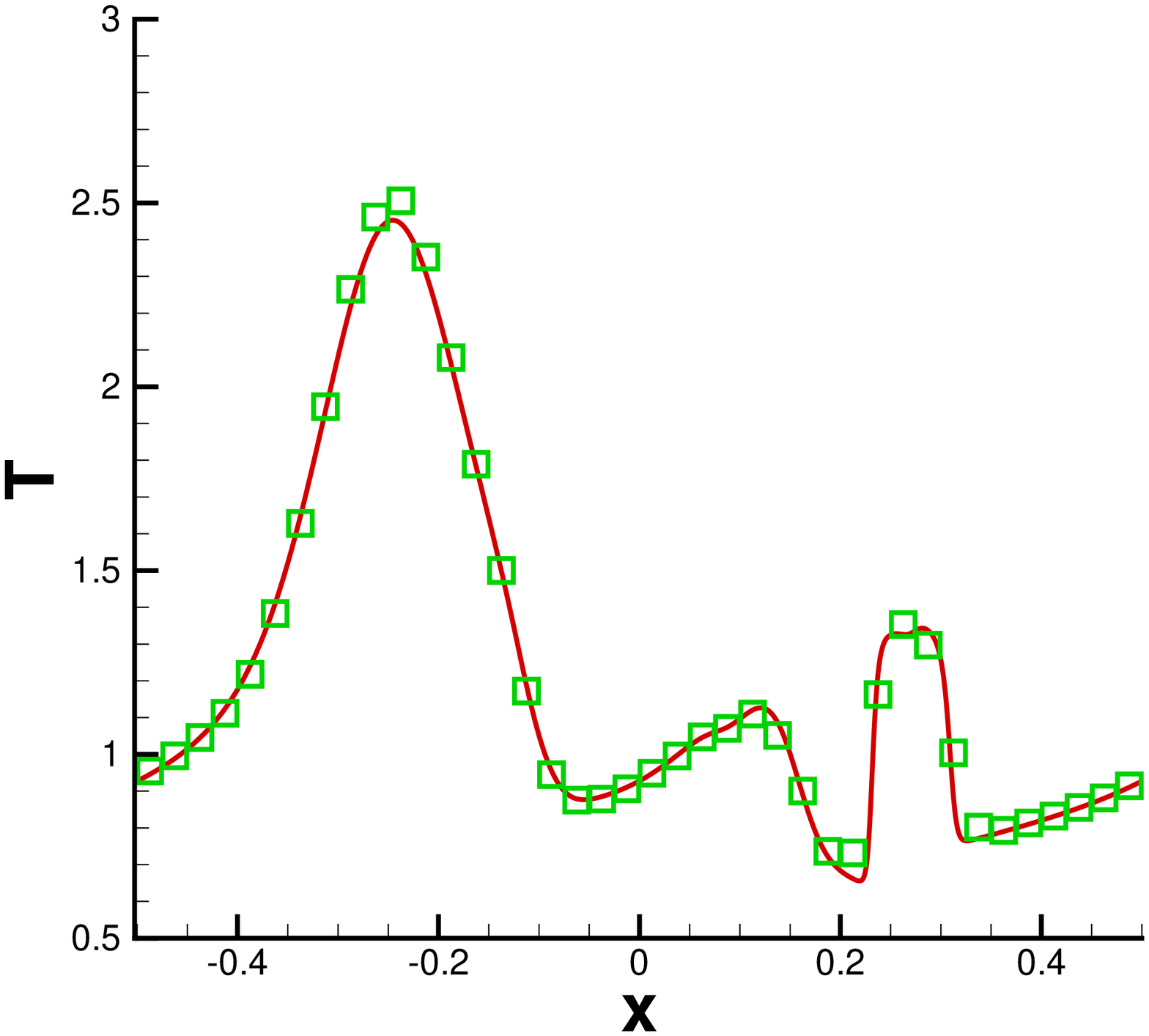}
\caption{Mixed regime problem with $\eps(x)$ in \eqref{eps} with $a_0=40$ and $\eps_0=10^{-3}$ on the domain $[-0.5,0.5]\times[-10,10]$.
Symbols: $N_x=40$ and $N_v=100$ with NDG3. Solid line: $N_x=1000$ and $N_v=100$ with NDG1 in space and the Euler forward in time to explicitly solve
%of explicitly solving
the BGK model \eqref{bgk}. From left to right: time
$t=0.1, 0.3, 0.45$. From top to bottom, the distribution function $f$ at $x=0$ along $v$ direction,
the density $\rho$, the mean velocity $u$ and the temperature $T$. With TVB limiter and $M_{tvb}=20$.}
\label{fig13}
\end{figure}

\end{exa}

\section{Conclusion}
\label{sec6}
\setcounter{equation}{0}
\setcounter{figure}{0}
\setcounter{table}{0}

The work in this paper is a continuation of our research effort in \cite{JLQX_diffusive, JLQX_analysis} to develop and analyze high order asymptotic preserving schemes for kinetic equations in different scalings. We here propose high order DG-IMEX schemes for the BGK equation in a  hyperbolic scaling. Specifically, we employ high order nodal DG spatial discretizations coupled with a high order globally stiffly accurate IMEX scheme in time for the equivalent micro-macro decomposition of the BGK equation. Two versions of the schemes are proposed. While the first one is more straightforward based on the micro-macro decomposition and can be formally shown to be asymptotically equivalent to a widely used RK DG schemes for the Euler equations in the limit of $\eps \rightarrow 0$; the second version is computationally more efficient, and more importantly it allows a formal asymptotic analysis that shows the equivalence, up to $\mathcal{O}(\eps^2)$, with a local DG scheme to the compressible Navier-Stokes equations when $0<\eps\ll 1$. Extensive numerical examples are presented to demonstrate the effectiveness of the proposed methods. Extension to kinetic equations with more general collisional operators will be explored in the future. 

\bibliographystyle{siam}
\bibliography{refer}

\end{document}